\documentclass[a4paper,12pt]{article}

\usepackage{amssymb, amsthm, amsmath} 
\usepackage[dvips]{color}
\usepackage{graphicx}
\usepackage{latexsym}
\usepackage{comment}

\topmargin=-25mm
\oddsidemargin=-6mm
\textwidth=170mm
\textheight=260mm

\DeclareMathOperator*{\s-lim}{s-lim}
\newcommand{\del}{\partial}

\newtheorem{Def}{\bf Definition}

\newtheorem{Thm}[Def]{\bf Theorem}
\newtheorem{Lem}[Def]{\bf Lemma}

\newtheorem{Prop}[Def]{\bf Proposition}

\renewcommand{\refname}{\bf \normalsize Reference} 

\if0
\makeatletter
\renewenvironment{thebibliography}[1]
{\section*{\refname\@mkboth{\refname}{\refname}}%
 \list{\@biblabel{\@arabic\c@enumiv}}%
      {\settowidth\labelwidth{\@biblabel{#1}}%
       \leftmargin\labelwidth
       \advance\leftmargin\labelsep
\setlength\itemsep{-0.1}
\setlength\baselineskip{9pt}
       \@openbib@code
       \usecounter{enumiv}%
       \let\p@enumiv\@empty
       \renewcommand\theenumiv{\@arabic\c@enumiv}}%
 \sloppy
 \clubpenalty4000
 \@clubpenalty\clubpenalty
 \widowpenalty4000%
 \sfcode`\.\@m}
{\def\@noitemerr
  {\@latex@warning{Empty `thebibliography' environment}}%
 \endlist}
\makeatother
\fi

\begin{document}
\title{Scattering theory for Schr\"{o}dinger equations on manifolds with asymptotically polynomially growing ends
}
\author{Shinichiro ITOZAKI\footnote{
Doctoral Course, Graduate School of Mathematical Sciences, The University of Tokyo.
3-8-1 Komaba Meguro-ku Tokyo 153-8914, Japan.
E-mail: randy@ms.u-tokyo.ac.jp.
}}
\maketitle

\begin{abstract}
We study a time-dependent scattering theory for Schr\"{o}dinger operators on a manifold with asymptotically polynomially growing ends. We use the Mourre theory to show the spectral properties of self-adjoint second-order elliptic operators. We prove the existence and the asymptotic completeness of wave operators using the smooth perturbation theory of Kato. We also consider a two-space scattering with a simple reference system.
\end{abstract}

\section{Introduction}\label{Introduction}
We study a class of self-adjoint second-order elliptic operators, which includes Laplacians with long-range potentials on non-compact manifolds which are asymptotically polynomially growing at infinity. We prove Mourre estimate and apply the Mourre theory to these operators. Then we show that there are no accumulation points of embedded eigenvalues except for the zero energy. We obtain resolvent estimates which imply the absence of singular spectrum. We also show the Kato-smoothness for three types of operators.
We construct a time-dependent scattering theory for two operators in our class. If the perturbation is ``short-range'', it admits a factorization into a product of Kato-smooth operators. By virtue of the smooth perturbation theory of Kato, we learn  the existence and the asymptotic completeness of wave operators. Lastly, we consider a two-space scattering with a simple reference system. We follow the settings by Ito and Nakamura.

We now describe our model. Let $M$ be an $n$-dimensional smooth non-compact manifold  such that $M = M_{C}\cup M_{\infty}$, where $M_{C}$ is pre-compact and $M_{\infty}$ is the non-compact end as follows: We assume that $M_{\infty}$ has the form $\mathbb{R}_{+}\times N$ where $N$ is a $n-1$-dimensional compact manifold,  and $\mathbb{R}_{+} = (0, \infty )$ is the real half line. Let $\omega$ be a positive $C^{\infty}$ density $\omega$ on $M$ such that on $M_{\infty}$,
\begin{gather*}
\omega = dr \cdot \mu
\end{gather*}
where $r$ is a coordinate in $\mathbb{R}_{+}$ and $\mu$ is a smooth positive density on $N$. We set $\mathcal{H}=L^{2}(M, \omega)$ be our function space. We set our "free operator" a self-adjoint second-order elliptic operator $L_{0}$ which has the form:
\begin{gather*}
L_{0} = D_{r}^{2} + k(r)P \ \ \ \ \ \ \ \ \ \ \ \text{on} \ (1, \infty) \times N.
\end{gather*}
Here $D_{r} = i^{-1}\del_{r}$, $P$ is a positive self-adjoint second-order elliptic operator acting on $L^{2}(N, \mu)$, and $k$ is a positive smooth function of $r$ such that the derivatives of $k$ satisfy the following estimates for some $c_{0}, C > 0$,
\begin{gather}
c_{0} r^{-1} k\leq -k^{\prime} \leq C r^{-1} k, \label{Conditions_for_k}\\
|k^{\prime \prime} | \leq C r^{-2} k \notag.
\end{gather}
For example $k(r) = r^{-\alpha},$ with $ \alpha > 0$  satisfies the above conditions.

We assume that $L$ is a second-order elliptic operator on $M$, essentially self-adjoint on $C_0^{\infty}(M)$, such that 
\begin{gather*}
L = L_{0} + E,
\end{gather*}
with $E$ having the following properties:
There are finitely many coordinate charts \\
$(r,  \theta_{1}, \cdots, \theta_{n-1})$ on $M_{\infty}$ such that in each chart $E$ has the form
\begin{gather}
E = (1, D_{r}, \sqrt{k}\tilde{D}_{\theta})
\begin{pmatrix}
V &b_{1} & b_{2} \\
b_{1} & a_{1} 	& a_{2} &\\
^{t}b_{2}& ^{t}a_{2} 	& a_{3}	
\end{pmatrix}
\begin{pmatrix}
1\\
D_{r}	\\
\sqrt{k}\tilde{D}_{\theta}
\end{pmatrix} \label{Def_of_E}
\end{gather}
where $\mu(\theta)$ is defined by $\mu = \mu(\theta)d_{\theta_{1}}\cdots d_{\theta_{n-1}}$ and $\tilde{D}_{\theta} = \mu(\theta)^{-\frac{1}{2}}D_{\theta}\mu(\theta )^{\frac{1}{2}}$ is self-adjoint on $L^{2}(N, \mu)$. The coefficients $a_{1}, a_{2}, b_{1}, b_{2},$ and $V$ have support in $M_{\infty}$ and are smooth real-valued functions of $(r,  \theta_{1}, \cdots, \theta_{n-1})$ such that
\begin{gather}
| \del_{r}^{l} \del_{\theta}^{\alpha} a_{j}(r, \theta)| \leq C_{l, \alpha} r^{-\nu _{a_{j}} - l}, \notag \\
| \del_{r}^{l} \del_{\theta}^{\alpha} b_{j}(r, \theta)| \leq C_{l, \alpha} r^{-\nu _{b_{j}} - l},  \label{Perturbation_Hypothesis}\\
| \del_{r}^{l} \del_{\theta}^{\alpha} V(r, \theta)| \leq C_{l, \alpha} r^{-\nu _{V} - l}.  \notag
\end{gather}
Let $\chi(r) \in C^{\infty}(\mathbb{R})$ be a $\mathbb{R}_{+}$-valued  function such that $\chi(r) = 1$ if $r \geq 1$ and $\chi(r) = 0	$ if $r \leq \frac{1}{2}$, and set $\chi_{R}(r) = \chi(\frac{r}{R})$ with $R > 0$.
We set our dilation generator by:
\begin{gather}
A = \frac{1}{2}(\chi_{R}^{2}rD_{r} +D_{r}r\chi_{R}^{2}). \label{Def_of_A}
\end{gather}

Now we state the main results.

\begin{Thm}\label{Thm_Mourre_Theory}
Suppose $L = L_{0} + E$, where $k$ satisfies \eqref{Conditions_for_k} and the coefficients in $E$ obey the bounds \eqref{Perturbation_Hypothesis} with $\nu = \min \{ \nu _{a_{i}}, \nu _{b_{j}}, \nu_{V} \} > 0$. Then $\sigma_{\text{ess}}(L) = \mathbb{R}_{+}\cup \{ 0 \}$ and $L$ satisfies a Mourre estimate at each point in $\mathbb{R}_{+}$ with conjugate operator $A$ in the sense of Definition \ref{Def_of_Mourre_Estimate}. In particular, eigenvalues of $L$ do not accumulate in $\mathbb{R}_{+}$, and $\sigma_{sc}(L)= \emptyset$. We also obtain the resolvent estimates:
\begin{gather*}
\sup_{z \in \Lambda_{\pm} = \Lambda \pm i \mathbb{R}_{+}} 
\| (|A| + 1)^{-s} (L -z)^{-1} (|A| + 1)^{-s} \| < \infty
\end{gather*}
if $\Lambda \Subset \mathbb{R} \setminus \sigma_{\text{pp}}(L)$ and $s > \frac{1}{2}$.
\end{Thm}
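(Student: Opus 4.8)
The plan is to verify, for the pair $(L,A)$, the hypotheses of the abstract conjugate-operator method summarised in Definition~\ref{Def_of_Mourre_Estimate}, and then to read off the spectral and resolvent statements from it. The work divides into the essential spectrum, the strict Mourre estimate, the regularity of $L$ relative to $A$, and the abstract conclusions. For the essential spectrum I would first observe that each monomial of $E$ is a decaying coefficient (of order $r^{-\nu}$, $\nu>0$, by \eqref{Perturbation_Hypothesis}) times an $L_0$-bounded second-order operator; since $\{r\le R\}$ is pre-compact, multiplication by any function supported there composed with $(L_0+i)^{-1}$ is compact by local elliptic regularity, while the factor $r^{-\nu}$ controls the tail $r\ge R$, so $E$ is relatively $L_0$-compact. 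Weyl's theorem then gives $\sigma_{\mathrm{ess}}(L)=\sigma_{\mathrm{ess}}(L_0)$, and decomposing $L_0=D_r^2+k(r)P$ over the eigenspaces of the operator $P$ (which has discrete spectrum $\{\mu_j\}$ on the compact $N$) together with $k(r)\to0$ as $r\to\infty$ shows each channel $D_r^2+k(r)\mu_j$ has essential spectrum $[0,\infty)$, whence $\sigma_{\mathrm{ess}}(L_0)=\mathbb{R}_+\cup\{0\}$.

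The core is the Mourre estimate, where the geometry works in our favour. With the genuine dilation generator $A_0=\tfrac12(rD_r+D_rr)$ a direct computation gives
\[
i[L_0,A_0]=2D_r^2-rk'(r)P .
\]
Because $-k'\ge c_0r^{-1}k$ by \eqref{Conditions_for_k} and $P\ge0$, the second term obeys $-rk'P\ge c_0kP\ge0$, and hence
\[
i[L_0,A_0]\ \ge\ 2D_r^2+c_0kP\ \ge\ \min(2,c_0)\,L_0 ,
\]
a \emph{strict} global positive commutator with no compact remainder. To return to the actual conjugate operator $A$, which carries the cutoff $\chi_R$, I would compute $i[L_0,A]$ directly: for $r\ge R$ it equals $2D_r^2-rk'P$, while the discrepancy is a differential operator supported in the pre-compact set $\{r\le R\}$ and is therefore relatively $L_0$-compact. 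Localising in energy by $E_\Delta(L_0)$ with $\overline{\Delta}\subset(0,\infty)$ a small interval about $\lambda$, on whose range $L_0\ge\inf\Delta>0$, yields $E_\Delta(L_0)\,i[L_0,A]\,E_\Delta(L_0)\ge c\,E_\Delta(L_0)+K$ with $c>0$ and $K$ compact. Since $i[E,A]$ is again a sum of $r^{-\nu}$-type coefficients times $L_0$-bounded operators — commuting with $rD_r$ differentiates a coefficient and multiplies by $r$, which preserves the decay rate — it is relatively $L_0$-compact, and replacing $E_\Delta(L_0)$ by $E_\Delta(L)$ modulo compact errors transfers the estimate to $(L,A)$.

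The last ingredient is the regularity $L\in C^{1,1}(A)$ (for which $C^2(A)$ is more than enough), requiring that the forms $i[L,A]$ and $[i[L,A],A]$ extend to $L_0$-bounded operators; this is exactly where the bound $|k''|\le Cr^{-2}k$ in \eqref{Conditions_for_k} and the higher-derivative bounds in \eqref{Perturbation_Hypothesis} are used, the two iterated commutators being once more second-order operators with controlled coefficients. With the Mourre estimate and this regularity established, the abstract theory delivers the stated conclusions simultaneously: the virial theorem gives the local finiteness of $\sigma_{\mathrm{pp}}(L)$ in $\mathbb{R}_+$, hence no accumulation of eigenvalues there; the limiting absorption principle gives the weighted resolvent bound with weights $(|A|+1)^{-s}$, $s>\tfrac12$, uniformly on $\Lambda_\pm$ for $\Lambda\Subset\mathbb{R}\setminus\sigma_{\mathrm{pp}}(L)$ (the bound being substantive on $(0,\infty)\setminus\sigma_{\mathrm{pp}}(L)$ and trivial off the essential spectrum); and this in turn forces $\sigma_{\mathrm{sc}}(L)=\emptyset$.

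I expect the main obstacle to be the bookkeeping required by the perturbation: because $E$ couples $D_r$ and $\sqrt{k}\,\tilde D_\theta$ through the off-diagonal coefficients $a_2,b_1,b_2$, one must verify for every monomial that the decay $r^{-\nu}$ (the $\sqrt{k}$ factors only help, since $k\to0$) still dominates after one or two commutators with $rD_r$, so that no term evades either the relative $L_0$-compactness used for the essential spectrum and the Mourre remainder, or the $L_0$-boundedness demanded by $C^{1,1}(A)$. By contrast the positivity itself is clean, coming entirely from the dilation identity for $i[L_0,A_0]$ together with the sign condition $-k'\ge c_0 r^{-1}k$.
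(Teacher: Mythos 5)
Your proposal is correct and follows essentially the same route as the paper: verify that $A$ is a conjugate operator via the commutator bounds coming from \eqref{Conditions_for_k} and \eqref{Perturbation_Hypothesis}, derive the positive commutator $i[L_0,A]\ge \min(2,c_0)\,\chi_R L_0\chi_R$ modulo localized errors, transfer the estimate to $L$ by compactness of the sandwiched error terms, and invoke the abstract Mourre theorem for the eigenvalue, singular-spectrum and resolvent conclusions. The only caveats are cosmetic: the paper gets $\sigma_{\mathrm{ess}}(L)=[0,\infty)$ via Persson's formula rather than your Weyl-plus-channel-decomposition argument, and the compactly supported second-order discrepancies you describe as ``relatively $L_0$-compact'' are in fact only compact after sandwiching with $f(L_0)$ or $E_{\Delta}(L_0)$ (a bare second-order operator times $(L_0+i)^{-1}$ is merely bounded) --- which is precisely the energy localization you, like the paper, carry out in the next step.
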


We prove Theorem \ref{Thm_Mourre_Theory} in Section \ref{Application of Mourre Theory}.

\begin{Thm}\label{Thm_Kato_Smooth_Operators}
Under the hypotheses of Theorem \ref{Thm_Mourre_Theory}, the operators
\begin{gather*}
G_{0} = \langle r \rangle^{-s},\\
G_{1} = \chi_{R}\langle r \rangle^{-s}D_{r},\\ 
G_{2} = \chi_{R}\langle r \rangle^{-\frac{1}{2}}(kP)^\frac{1}{2}
\end{gather*}
are $L$-smooth on $\Lambda$ if $\Lambda \Subset \mathbb{R} \setminus \sigma_{\text{pp}}(L)$ and $s > \frac{1}{2}$.
\end{Thm}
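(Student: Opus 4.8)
The plan is to use the standard resolvent characterisation of Kato smoothness: an operator $G$ is $L$-smooth on $\Lambda$ if and only if
\[
\sup_{\mathrm{Re}\,z\in\Lambda,\ \mathrm{Im}\,z\neq 0}\big\|\,G\,(L-z)^{-1}G^{*}\,\big\|<\infty .
\]
Theorem \ref{Thm_Mourre_Theory} furnishes exactly this bound for the weight $(|A|+1)^{-s}$, i.e. $(|A|+1)^{-s}$ is itself $L$-smooth on $\Lambda$. I will combine this with the elementary \emph{domination principle}: if $G^{*}G\le C\,S^{*}S$ as quadratic forms and $S$ is $L$-smooth on $\Lambda$, then so is $G$, since $\int\|Ge^{-itL}u\|^{2}\,dt\le C\int\|Se^{-itL}u\|^{2}\,dt$. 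Since $\Lambda\Subset\mathbb{R}\setminus\sigma_{\mathrm{pp}}(L)$ is compact, I first localise energy: choose $f\in C_{0}^{\infty}(\mathbb{R})$ with $f\equiv1$ near $\Lambda$ and write $(L-z)^{-1}=f(L)(L-z)^{-1}f(L)+(1-f(L))(L-z)^{-1}+f(L)(L-z)^{-1}(1-f(L))$; the last two terms are uniformly bounded for $\mathrm{Re}\,z\in\Lambda$ because $(1-f(\lambda))(\lambda-z)^{-1}$ is then bounded. Thus it suffices to dominate each $G_{j}f(L)$, on $\mathrm{Ran}\,f(L)$, by an $L$-smooth operator built from $(|A|+1)^{-s}$.

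On $\mathrm{Ran}\,f(L)$ the energy is bounded, so by ellipticity and the relative form-boundedness of $E$ (guaranteed by \eqref{Perturbation_Hypothesis} with $\nu>0$) the operators $D_{r}f(L)$ and $(kP)^{1/2}f(L)$ are bounded, since $D_{r}^{2},kP\le L_{0}=L-E$. In particular the radial frequency is bounded on the window, so the symbol $r\xi$ of the dilation generator \eqref{Def_of_A} obeys $\langle A\rangle\lesssim\langle r\rangle$ there; at the level of symbols $\langle r\xi\rangle^{-s}\ge c\,\langle r\rangle^{-s}$ whenever $|\xi|$ is bounded. Justifying this by the Helffer--Sj\"ostrand representation of $f(L)$ and symbolic/commutator calculus on the end yields $\langle r\rangle^{-s}f(L)=B\,(|A|+1)^{-s}$ with $B$ bounded, so $G_{0}^{*}G_{0}=\langle r\rangle^{-2s}$ and (after commuting through the bounded factor $D_{r}f(L)$) $G_{1}^{*}G_{1}$ are dominated on $\mathrm{Ran}\,f(L)$ by a multiple of $(|A|+1)^{-s}(|A|+1)^{-s}$. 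This settles $G_{0}$ and $G_{1}$.

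The delicate operator is $G_{2}=\chi_{R}\langle r\rangle^{-\frac12}(kP)^{\frac12}$, for which the spatial weight $\langle r\rangle^{-1/2}$ alone cannot absorb $(|A|+1)^{s}$ once $s>\tfrac12$. Here I would use the Mourre commutator directly. A computation from \eqref{Conditions_for_k} gives, on the end,
\[
i[L_{0},A]=2\chi_{R}^{2}D_{r}^{2}+\chi_{R}^{2}(-rk')P+(\text{lower order}),\qquad -rk'\ge c_{0}k\ge0 .
\]
Testing the propagation identity $\tfrac{d}{dt}\langle\psi_{t},\Theta(A)\psi_{t}\rangle=\langle\psi_{t},i[L,\Theta(A)]\psi_{t}\rangle$ with a bounded increasing $\Theta$ for which $\Theta'=\langle\cdot\rangle^{-2s}$, and expanding $i[L,\Theta(A)]=\langle A\rangle^{-s}\,i[L,A]\,\langle A\rangle^{-s}+(\text{l.o.t.})$, bounds the time integral by $2\|\Theta\|_{\infty}\|\psi\|^{2}$ plus remainders controlled by the smoothness of $(|A|+1)^{-s}$; this shows that $(i[L,A])^{1/2}(|A|+1)^{-s}$, and hence by monotonicity of the square root $T:=\chi_{R}(kP)^{1/2}(|A|+1)^{-s}$, is $L$-smooth on $\Lambda$. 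It then remains to dominate $G_{2}^{*}G_{2}=\chi_{R}^{2}\langle r\rangle^{-1}kP$ by $C\,T^{*}T=C\,(|A|+1)^{-s}\chi_{R}^{2}kP\,(|A|+1)^{-s}$ on $\mathrm{Ran}\,f(L)$: where the radial frequency is large $(|A|+1)^{-s}$ is small but there $kP$ is also small because $\xi^{2}+kP$ is bounded on the energy shell, while where $\xi$ is small $(|A|+1)^{-s}\sim1$ dominates $\langle r\rangle^{-1}$.

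I expect this last microlocal comparison to be the main obstacle: reconciling the purely spatial weight of $G_{2}$ with the dilation weight $(|A|+1)^{-s}$ forces one to exploit the joint energy constraint $\xi^{2}+kP\lesssim1$ on $\mathrm{Ran}\,f(L)$ and to run a careful symbol calculus (e.g. the sharp G\aa rding inequality) on the non-compact, polynomially growing end. A secondary but pervasive technical point is to verify that every commutator remainder in the expansions above, and every contribution of the perturbation $E$, is genuinely of lower order and absorbable, which is where the bounds \eqref{Conditions_for_k} and the condition $\nu>0$ enter decisively.
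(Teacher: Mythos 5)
For $G_{0}$ and $G_{1}$ your strategy is essentially the paper's: Section \ref{Limiting Absorption Principle} proves exactly the weight transfer you describe, by showing $\langle |A|\rangle^{\alpha}(L+i)^{-1}\langle r\rangle^{-\alpha}$ is bounded for $0\le\alpha\le 2$ (Lemma \ref{Lem_for_LAP2}, obtained from explicit commutator computations and interpolation rather than Helffer--Sj\"ostrand calculus) and then sandwiching the Mourre resolvent bound between resolvents to deduce Theorem \ref{Thm_Limiting_Absorption_Principle}; the second estimate there, with the weight $\langle r\rangle^{-1-s}A$, is what handles $G_{1}$. Up to the level of rigor, this part of your proposal is sound and is the same route.

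For $G_{2}$, however, there is a genuine gap, and it is precisely at the step you flagged: the comparison
\begin{equation*}
\chi_{R}^{2}\langle r\rangle^{-1}kP \;\le\; C\,(|A|+1)^{-s}\,\chi_{R}^{2}kP\,(|A|+1)^{-s}\quad\text{on }\mathrm{Ran}\,f(L)
\end{equation*}
is false for $s>\tfrac12$, and no amount of symbol calculus will rescue it. On the energy shell $\xi^{2}+k(r)\eta^{2}\approx\lambda$ there is a nonempty phase-space region where $\xi$ and $k\eta^{2}$ are \emph{both} of order one; there the symbol of the left side is $\sim\langle r\rangle^{-1}$ while the symbol of the right side is $\sim\langle r\xi\rangle^{-2s}\,k\eta^{2}\sim\langle r\rangle^{-2s}\ll\langle r\rangle^{-1}$. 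The same region defeats any attempted domination by $G_{0}^{*}G_{0}$ or $G_{1}^{*}G_{1}$, whose symbols are also $O(\langle r\rangle^{-2s})$. This reflects a structural fact: the radiation estimate is strictly stronger than what the limiting absorption principle (i.e.\ smoothness of $(|A|+1)^{-s}$) can yield, so the "domination by an $L$-smooth operator" scheme with $S=(|A|+1)^{-s}$ cannot close. The paper's proof supplies the missing positivity differently: it applies the Putnam--Kato commutator criterion (Proposition \ref{Copmmutator_method_to_find_new_Kato_smooth_operators}) with the conjugate operator $M=\tfrac12(\chi_{R}D_{r}+D_{r}\chi_{R})$ --- note, \emph{without} the factor $r$ that appears in $A$ --- for which \eqref{Conditions_for_k} gives $[kP,iM]=-\chi_{R}k'P\ge c_{0}\chi_{R}r^{-1}kP$, i.e.\ the commutator itself produces $G_{2}^{*}G_{2}$ with the correct weight $\langle r\rangle^{-1}$ (Lemma \ref{Lemma_for_Radiation_Estimates}); the error terms are then absorbed by the already-established smoothness of $G_{0}$ and $G_{1}$. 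Your propagation argument with $\Theta(A)$ uses the conjugate $A\sim rD_{r}$, whose commutator yields $\chi_{R}^{2}kP$ with no spatial decay, and the subsequent attempt to convert $(|A|+1)^{-2s}$ into $\langle r\rangle^{-1}$ is exactly where the argument breaks. Replacing $A$ by $M$ in your commutator step would repair the proof and reduce it to the paper's.
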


We prove Theorem \ref{Thm_Kato_Smooth_Operators} in Section \ref{Limiting Absorption Principle}  and Section \ref{Radiation Estimates}.

\begin{Thm}\label{Thm_wave_operator}
Suppose the short-range condition for $E$, that is, $\nu _{a_{1}} = \nu _{a_{2}} = \nu _{b_{1}} = \nu _{b_{1}} = \nu_{V} > 1,$ and $\nu _{a_{3}} =1$.
Then the wave operators
\begin{equation*}
W^{\pm}(L, L_{0}) := \text{s-}\lim_{t\to \pm \infty}e^{itL} e^{-itL_{0}}P_{ac}(L_{0})
\end{equation*}
and $W^{\pm}(L_{0}, L)$ exist and are adjoint each other. They are complete and give the unitarily equivalence between $L_{0}^{(ac)}$ and $L^{(ac)}$.
\end{Thm}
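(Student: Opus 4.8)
The proof runs through Kato's theory of smooth perturbations. Recall its abstract form: if $H=H_0+V$ with both operators self-adjoint and if the perturbation can be written, as a quadratic form, as $V=\mathbf G^{*}M\mathbf G$ with $M$ bounded and $\mathbf G$ an operator that is simultaneously $H_0$-smooth and $H$-smooth on a Borel set $\Lambda$, then the local wave operators $W^{\pm}(H,H_0;\Lambda)$ and $W^{\pm}(H_0,H;\Lambda)$ exist, are complete and are mutually adjoint. The plan is to produce exactly such a factorization of $E=L-L_0$ out of the three operators $G_0,G_1,G_2$ of Theorem \ref{Thm_Kato_Smooth_Operators}. The first observation is that these operators are not only $L$-smooth but also $L_0$-smooth: since the free operator $L_0$ is itself a member of our class (it corresponds to $E=0$, which trivially satisfies \eqref{Perturbation_Hypothesis}), Theorem \ref{Thm_Kato_Smooth_Operators} applies verbatim to $L_0$ and yields the $L_0$-smoothness of $G_0,G_1,G_2$ on any $\Lambda\Subset\mathbb R\setminus\sigma_{\mathrm{pp}}(L_0)$.

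Next I would expand the quadratic form \eqref{Def_of_E}: modulo commutators, $E$ is a finite sum of terms of the shape (operator)$\times$(coefficient)$\times$(operator), where each outer operator is $1$, $D_r$ or $\sqrt k\,\tilde D_\theta$ and each coefficient is one of $V,a_1,a_2,a_3,b_1,b_2$. I then match every term against $\mathbf G=(G_0,G_1,G_2)$: the scalar term $V$ and the two-$D_r$ term $a_1$ factor through $G_0$ and through $G_1$ respectively, the cross terms in $b_1$ factor through $G_0,G_1$, and the angular terms involving $b_2,a_2,a_3$ factor through $G_2$. Because $k=k(r)$ commutes with $P$ we have $(kP)^{1/2}=k^{1/2}P^{1/2}$, and the boundedness of $P^{-1/2}\tilde D_\theta$ and $\tilde D_\theta P^{-1/2}$ on $L^2(N,\mu)$ --- a consequence of the ellipticity of $P$ --- lets one convert $\sqrt k\,\tilde D_\theta$ into $G_2$ at the price of a bounded angular factor. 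The middle factors are bounded provided one can choose a single $s$ with $s>\tfrac12$, $2s\le\nu$ for the terms sandwiched between two $\langle r\rangle^{-s}$-type weights and $s+\tfrac12\le\nu$ for the $G_0$--$G_2$ and $G_1$--$G_2$ cross terms; under the short-range hypothesis all of $\nu_V,\nu_{a_1},\nu_{a_2},\nu_{b_1},\nu_{b_2}$ exceed $1$, so any $s\in(\tfrac12,\tfrac{\nu}{2}]$ works, while the borderline value $\nu_{a_3}=1$ is matched precisely by the two weights $\langle r\rangle^{-1/2}$ built into $G_2$. This realizes $E=\mathbf G^{*}M\mathbf G$ with $M$ bounded.

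With the factorization in hand I would apply the abstract theorem on each interval $\Lambda\Subset\mathbb R_+\setminus(\sigma_{\mathrm{pp}}(L)\cup\sigma_{\mathrm{pp}}(L_0))$, where $\mathbf G$ is both $L$- and $L_0$-smooth, obtaining existence, completeness and the adjoint relation for the local wave operators. By Theorem \ref{Thm_Mourre_Theory} the point spectra of $L$ and $L_0$ do not accumulate in $\mathbb R_+$ and the singular continuous spectrum is empty, so an increasing family of such $\Lambda$ exhausts $\mathbb R_+$ up to the discrete eigenvalues and the threshold $\{0\}$, a set of Lebesgue measure zero that carries no absolutely continuous spectrum. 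A standard limiting argument then upgrades the local statements to the global wave operators $W^{\pm}(L,L_0)$ and $W^{\pm}(L_0,L)$, which exist, are complete, are adjoint to each other, and intertwine $L_0^{(ac)}$ with $L^{(ac)}$ unitarily.

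The main obstacle is the factorization of the angular part of $E$, and in particular the borderline term $\sqrt k\,\tilde D_\theta\,a_3\,\sqrt k\,\tilde D_\theta$. Here the coefficient $a_3$ is only $O(r^{-1})$, which is long-range in the radial variable; the reason it can still be absorbed is that the accompanying factors $\sqrt k\,\tilde D_\theta$ supply the additional smoothing encoded in $G_2$, and one must carefully verify, keeping track of the operator ordering and of the commutators generated when $\tilde D_\theta$ falls on the $\theta$-dependent coefficients, that $P^{-1/2}\tilde D_\theta\,a_3\,\tilde D_\theta P^{-1/2}$ is genuinely a bounded operator on $L^2(N,\mu)$ with the stated $r$-decay. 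Establishing this boundedness, rather than the subsequent invocation of Kato's theorem, is where the real work lies.
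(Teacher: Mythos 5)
Your proposal follows essentially the same route as the paper: factor $E$ through the Kato-smooth operators $G_0,G_1,G_2$ of Theorem \ref{Thm_Kato_Smooth_Operators} with bounded middle factors (noting that $L_0$ itself satisfies the hypotheses, so the $G_l$ are both $L_0$- and $L$-smooth), then invoke Kato's smooth perturbation theory on intervals exhausting $\mathbb{R}\setminus(\sigma_{\mathrm{pp}}(L)\cup\sigma_{\mathrm{pp}}(L_0))$. The one detail you omit is that $G_1$ and $G_2$ carry the cutoff $\chi_R$, so the part of $E$ supported where $\chi_R\neq 1$ cannot be matched against them and must be split off as a compactly supported second-order remainder $E_C$, which the paper disposes of separately via the local $L_0$- and $L$-smoothness of differential operators with compactly supported coefficients.
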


We prove Theorem \ref{Thm_wave_operator} in Section \ref{Wave Operators}.
We note that the wave operators $W^{\pm}(L_{2}, L_{1})$ exist and are asymptotically complete if both of $L_{1}$ and $L_{2}$ satisfy the hypotheses of Theorem \ref{Thm_Mourre_Theory} (long-range ) but the difference $L_{2} - L_{1}$ is short-range in the sense of Theorem \ref{Thm_wave_operator}.

Next we consider a two-space scattering.
We prepare a reference system as follows:
\begin{gather*}
M_{f} = \mathbb{R}\times N,\ \  \mathcal{H}_{f} = L^{2}(M_{f}, H(\theta)dr d\theta ),\\
H_{0} = D_{r}^{2} \ \text{on} \ M_{f},\\
H_{k} = D_{r}^{2} + k(r) P \ \text{on} \ M_{f}.
\end{gather*}
Note that $H_{0}$ and $H_{k}$ are essentially self-adjoint on $C_{0}^{\infty}(M_{f})$,
and we denote the unique self-adjoint extensions by the same symbols.
We define the identification operator $J: \mathcal{H}_{f} \to \mathcal{H}$ by
\begin{equation*}
(Ju)(r,\theta)= \chi(r)u(r, \theta) \ \ \text{if}\ \  (r,\theta) \in M_{\infty}
\end{equation*}
and $Ju(x) = 0$ if $x \notin M_{\infty}$, where $u\in \mathcal{H}_{f}$.
We denote the Fourier transform with respect to $r$ variable by $\mathcal{F}$:
\begin{gather*}
(\mathcal{F} u )(\rho, \theta) = \frac{1}{\sqrt{2 \pi}} \int e^{i r \rho}u(r, \theta ) dr.
\end{gather*}
We set 
\begin{gather*}
\mathcal{H}_{f}^{\pm} : = \mathcal{F}^{-1}[ 1_{\mathbb{R}_{\pm}}(\rho) L^{2}(\mathbb{R} \times N : d\rho\cdot \mu)].
\end{gather*}
Then $\mathcal{H}_{f} = \mathcal{H}_{f}^{+} \oplus \mathcal{H}_{f}^{-}$.

In the two-space scattering, we need additional conditions on $k$:

\begin{Def}\label{Def_of_k_SR_LR}
Suppose that $k$ is a positive smooth function of $r$ satisfying \eqref{Conditions_for_k}.
$k$ is said to be short-range if
\begin{gather}
|k(r)| \leq C \langle r \rangle^{-\nu_{k}} \label{short_range_k}
\end{gather}
with $\nu_{k} > 1$.
$k$ is said to be smooth long-range if
\begin{gather}
| \del_{r}^{l} k(r) | \leq C \langle r \rangle^{-\nu_{k}- l} \label{smooth_long_range_k}
\end{gather}
with $l \in \mathbb{N}$, and $\nu_{k} > 0$.
\end{Def}

For short-range $k$, we have the following.
\begin{Thm}\label{Thm_wave_operator_two_space}
Suppose the hypotheses of Theorem \ref{Thm_wave_operator} and that $k$ is short-range.
Then the wave operators
$W^{\pm}(L, H_{0}; J)$ and $W^{\pm}(H_{0}, L; J^{*})$ exist and are adjoint each other. The asymptotic completeness
\begin{gather*}
W^{\pm}(L, H_{0}; J) \mathcal{H}_{f}^{\pm} = P_{ac}(L) \mathcal{H}
\end{gather*}
holds.
\end{Thm}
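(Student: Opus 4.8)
The plan is to prove Theorem \ref{Thm_wave_operator_two_space} by a chain-rule argument that reduces the two-space problem to the already-established one-space result (Theorem \ref{Thm_wave_operator}) together with a comparison between the reference system $H_0 = D_r^2$ and the intermediate operator $H_k = D_r^2 + k(r)P$ on $M_f$. The key observation is that when $k$ is short-range in the sense of \eqref{short_range_k} with $\nu_k > 1$, the term $k(r)P$ is a short-range perturbation of $H_0$, so that the ordinary two-operator wave operators
\begin{equation*}
\widetilde{W}^{\pm}(H_k, H_0) = \text{s-}\lim_{t\to\pm\infty} e^{itH_k}e^{-itH_0}P_{ac}(H_0)
\end{equation*}
exist and are complete on $\mathcal{H}_f$. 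First I would establish this auxiliary completeness: since $H_0$ and $H_k$ differ only by the multiplication-type operator $k(r)P$ and $k$ decays faster than $r^{-1}$, Cook's method combined with the $L$-smoothness machinery of Theorem \ref{Thm_Kato_Smooth_Operators} (adapted to the simpler product manifold $M_f$) gives a factorization of $k(r)P$ into Kato-smooth factors $G_2$-type operators, whence Kato's smooth perturbation theory yields existence and asymptotic completeness of $\widetilde{W}^{\pm}(H_k, H_0)$.

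Next I would relate $H_k$ on $M_f$ to $L$ on $M$ through the identification operator $J$. The point is that on the end $M_\infty$, the operator $L$ agrees with $L_0 = D_r^2 + k(r)P$ up to the short-range perturbation $E$ (under the hypotheses of Theorem \ref{Thm_wave_operator}), while $J$ simply restricts to $M_\infty$ via the cutoff $\chi$. I would show that the intertwining wave operators $W^{\pm}(L, H_k; J)$ exist and are complete, using again a Cook-type estimate: the commutator $LJ - JH_k$ is supported near the collar $r \sim 1$ (from derivatives of $\chi$) and is therefore compact and locally smooth, plus a short-range contribution from $E$. The channel structure $\mathcal{H}_f = \mathcal{H}_f^+ \oplus \mathcal{H}_f^-$ enters because the cutoff $\chi$ kills the incoming/outgoing asymmetry at $r \to -\infty$ on $M_f$; propagation estimates show that only the $\mathcal{H}_f^{\pm}$ component survives in the $t \to \pm\infty$ limit, giving the identity
\begin{gather*}
W^{\pm}(L, H_0; J)\mathcal{H}_f^{\pm} = P_{ac}(L)\mathcal{H}.
\end{gather*}

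Finally I would assemble the pieces by the chain rule for wave operators. Writing $W^{\pm}(L, H_0; J) = W^{\pm}(L, H_k; J)\,\widetilde{W}^{\pm}(H_k, H_0)$, the existence and completeness of each factor (the first by the intertwining argument just sketched, the second by the auxiliary step) yield existence and completeness of the composite, and the adjoint relation $W^{\pm}(H_0, L; J^*) = W^{\pm}(L, H_0; J)^*$ follows from the corresponding adjoint relations for the factors and the boundedness of $J$. I would verify that the range of $\widetilde{W}^{\pm}(H_k, H_0)$ exhausts $P_{ac}(H_0)\mathcal{H}_f$ and that $W^{\pm}(L, H_k; J)$ maps this onto $P_{ac}(L)\mathcal{H}$ after restriction to the correct channel subspace.

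The main obstacle I expect is the second step, namely proving the existence and completeness of the intermediate intertwining operators $W^{\pm}(L, H_k; J)$. The difficulty is twofold: one must control the non-commutativity $LJ - JH_k$ carefully near the gluing region and show it factors through Kato-smooth operators for \emph{both} systems simultaneously, which requires the $L$-smoothness of $G_0, G_1, G_2$ from Theorem \ref{Thm_Kato_Smooth_Operators} together with the analogous smoothness statements for $H_k$ on $M_f$; and one must justify the channel decomposition, i.e.\ prove that wave packets evolving under $H_k$ and localized in $\rho > 0$ (resp. $\rho < 0$) propagate into the region $r \to +\infty$ (resp. toward the collar, where the cutoff suppresses them) as $t \to +\infty$, which is precisely where the sign condition $\mathcal{H}_f^{\pm}$ and the short-range hypothesis on $k$ are both essential. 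Handling the threshold at the zero energy and the behavior of $P$ (whose spectrum enters the dispersion of $H_k$) will require a careful energy localization away from $\sigma_{\text{pp}}(L)$ and away from the bottom of the continuous spectrum.
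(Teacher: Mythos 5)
Your overall strategy is the paper's: insert the intermediate operator $H_{k}=D_{r}^{2}+k(r)P$ on $M_{f}$, handle each link by Kato's smooth perturbation theory, read off the channel structure from the free one-dimensional propagation, and assemble by the chain rule. There is, however, a genuine gap in your first link. You propose to factor the perturbation $k(r)P$ of the pair $(H_{k},H_{0})$ "into Kato-smooth factors of $G_{2}$-type." This cannot work as stated: Theorem \ref{Kato's_smooth_method} requires the factor on the $H_{0}$ side to be $H_{0}$-bounded, but $H_{0}=D_{r}^{2}$ gives no control whatsoever of $P$ in the $N$-direction, so neither $(kP)^{1/2}$ nor any operator carrying a positive power of $P$ is $H_{0}$-bounded, let alone $H_{0}$-smooth. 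The radiation-estimate operator $G_{2}$ is the wrong tool here. The missing idea is the one the paper uses in Proposition \ref{Prop_wave_operator_free_k_short}: since $P$ commutes with both $H_{0}$ and $H_{k}$, take the spectral projection $E_{P}(\Lambda)$ as an identifier and write $H_{k}E_{P}(\Lambda)-E_{P}(\Lambda)H_{0}=\sqrt{k}\,\bigl(PE_{P}(\Lambda)\bigr)\sqrt{k}$; the middle factor is bounded and $\sqrt{k}$ is locally $H_{0}$- and $H_{k}$-smooth by the one-dimensional limiting absorption principle (this is a $G_{0}$-type estimate, using $\nu_{k}>1$ so that $\sqrt{k}\lesssim\langle r\rangle^{-\nu_{k}/2}$ with $\nu_{k}/2>\tfrac12$). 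Commutativity then removes the localization $E_{P}(\Lambda)$. This fiber decomposition also disposes of your worries about "the behavior of $P$ in the dispersion of $H_{k}$": on each $P$-fiber one faces only $D_{r}^{2}+\lambda k(r)$ on the line.

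Your second link is also organized less efficiently than the paper's, and this matters because you correctly identify it as the main obstacle. You attack $W^{\pm}(L,H_{k};J)$ directly, which forces you to treat the gluing error from $\chi$ and the short-range perturbation $E$ simultaneously. The paper instead splits this link in two: $W^{\pm}(L_{0},H_{k};J)$, where $L_{0}J-JH_{k}$ consists only of first-order differential operators with \emph{compactly supported} coefficients (derivatives of $\chi$) and is therefore trivially locally smooth for both operators (Proposition \ref{Prop_wave_operator_two_space_gomi}); and $W^{\pm}(L,L_{0})$, which is exactly the one-space Theorem \ref{Thm_wave_operator} you may already quote. With that splitting, the hard part of your plan disappears, and the channel statements $W^{\pm}(L,H_{0};J)\mathcal{H}_{f}^{\mp}=0$ and the isometry on $\mathcal{H}_{f}^{\pm}$ follow directly from $\lim_{t\to\pm\infty}\|Je^{-itH_{0}}u\|=\|u\|$ and $\lim_{t\to\mp\infty}\|Je^{-itH_{0}}u\|=0$ for $u\in\mathcal{H}_{f}^{\pm}$ — a propagation statement for $e^{-itD_{r}^{2}}$ alone, not for $e^{-itH_{k}}$ as you suggest — while the isometry of the adjoint on $P_{ac}(L_{0})\mathcal{H}$ uses the local smoothness of $1-\chi$. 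I recommend you adopt the paper's $E_{P}(\Lambda)$ device and the four-term chain $H_{0}\to H_{k}\to L_{0}\to L$; the rest of your argument then goes through.
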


For long-range $k$, we need to modify the identifier.
\begin{Thm}\label{Thm_wave_operators_k_long-range_Intro}
Suppose $k$ is smooth long-range in the sense of Definition \ref{Def_of_k_SR_LR}. Fix  $\Lambda \Subset \mathbb{R}$. Then there exists suitable operators $J^{\pm} \in B(\mathcal{H}_{f})$ such that the wave operators $W^{\pm}(L, H_{0}; J J^{\pm})$ and $W^{\pm}(H_{0}, L; (J J^{\pm})^{*})$ exist and are isometric on $E_{\Lambda}(H_{0} )\mathcal{H}_{f}^{\pm}$ and $E_{\Lambda}(L)P_{ac}(L)\mathcal{H}$, respectively, $W^{\pm}(L, H_{0}; J J^{\pm})\mathcal{H}_{f}^{\mp} = 0$, and the asymptotic completeness
\begin{gather*}
W^{\pm}(L, H_{0}; J J^{\pm}) E_{\Lambda}(H_{0} )\mathcal{H}_{f}^{\pm} = E_{\Lambda}(L)P_{ac}(L)\mathcal{H}
\end{gather*}
holds.
\end{Thm}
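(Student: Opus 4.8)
The plan is to factor this two-space, long-range comparison into a short-range two-space comparison plus a single-space long-range comparison, and to treat the genuinely long-range part by an Isozaki--Kitada type stationary modifier built channel by channel from the spectral decomposition of $P$. Concretely, I would interpose the operator $H_k = D_r^2 + k(r)P$ on $\mathcal H_f$ and aim at the factorization $W^\pm(L, H_0; JJ^\pm) = W^\pm(L, H_k; J)\,W^\pm(H_k, H_0; J^\pm)$, with $J^\pm \in B(\mathcal H_f)$ the modifier that absorbs the long-range tail of $k$.

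First I would handle the comparison of $L$ with $H_k$ through $J$. The point is that, after the identification $J$, these differ only through the short-range perturbation $E$ and through the commutator $[J, D_r^2 + kP]$, which is supported where $\chi' \neq 0$ and hence compactly. The Mourre theory of Theorem \ref{Thm_Mourre_Theory} applies verbatim to $H_k$ on the cylinder $\mathbb R \times N$, so the Kato-smooth operators $G_0, G_1, G_2$ of Theorem \ref{Thm_Kato_Smooth_Operators} are available for both $L$ and $H_k$. One then factors $LJ - JH_k$ into a product of $H_k$-smooth and $L$-smooth operators; note in particular that the borderline coefficient $a_3$ with $\nu_{a_3}=1$ contributes a term of size $r^{-1}kP$, which is exactly of the Kato-smooth product form $G_2^*(\text{bounded})G_2$. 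By the smooth perturbation theory this gives existence and completeness of $W^\pm(L, H_k; J)$, which is isometric from the $+\infty$-outgoing absolutely continuous subspace of $H_k$ onto $P_{ac}(L)\mathcal H$ and vanishes on the part scattering toward $-\infty$. Since the $kP$-terms of $L$ and $H_k$ coincide, this step is insensitive to whether $k$ is short- or long-range.

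Next I would construct $J^\pm$ and establish the single-space comparison $W^\pm(H_k, H_0; J^\pm)$. Diagonalizing $P = \sum_j \lambda_j \Pi_j$ (discrete spectrum, $N$ compact) reduces $H_k$ on the $\lambda_j$-sector to the one-dimensional operator $D_r^2 + \lambda_j k(r)$, which is a long-range perturbation of $D_r^2$ because $\nu_k$ may be $\leq 1$. For each channel I would solve the eikonal equation $(\partial_r \varphi_j^\pm)^2 + \lambda_j k(r) = \rho^2$, taking $\varphi_j^\pm(r,\rho) = r\rho + \int^r (\sqrt{\rho^2 - \lambda_j k(s)} - \rho)\,ds$, and set $J^\pm = \sum_j \Pi_j \otimes \mathcal F^{-1} e^{i\varphi_j^\pm} b_j^\pm \mathcal F$, where $b_j^\pm$ cuts off to the outgoing/incoming region ($\pm\rho > 0$, $r$ large positive). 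Restricting to the window $E_\Lambda(H_0)$ keeps $\rho^2$ bounded and renders the channel sum uniformly controlled, so $J^\pm \in B(\mathcal H_f)$. Existence of $W^\pm(H_k, H_0; J^\pm)$ I would obtain by Cook's method: the eikonal identity cancels the leading, non-integrable contribution to $\tfrac{d}{dt}\,e^{itH_k}J^\pm e^{-itH_0}u$, and the smooth long-range bounds \eqref{smooth_long_range_k} make the remainder integrable in $t$ along the classical propagation. The localization of $b_j^\pm$ to $r \to +\infty$ then yields both the isometry on $E_\Lambda(H_0)\mathcal H_f^\pm$ and the vanishing $W^\pm(H_k, H_0; J^\pm)\mathcal H_f^\mp = 0$.

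Finally I would assemble the result. The chain rule for wave operators gives $W^\pm(L, H_0; JJ^\pm) = W^\pm(L, H_k; J)\,W^\pm(H_k, H_0; J^\pm)$, which exists, is isometric on $E_\Lambda(H_0)\mathcal H_f^\pm$, and kills $\mathcal H_f^\mp$; its range is $E_\Lambda(L)P_{ac}(L)\mathcal H$ because the modifier maps $\mathcal H_f^+$ onto the $+\infty$-outgoing part of $\mathcal H_{ac}(H_k)$, which $W^\pm(L,H_k;J)$ carries onto $P_{ac}(L)\mathcal H$. For the reverse operators $W^\pm(H_0, L; (JJ^\pm)^*)$ I would run the same Cook and smoothness estimates in the opposite direction, using that the modifier is asymptotically unitary, i.e. $J^\pm(J^\pm)^* \to I$ on $E_\Lambda(H_0)\mathcal H_f^\pm$ in the relevant averaged sense; this shows the two families are mutual adjoints and that the reverse operator inverts the forward one on the appropriate subspaces, giving the stated asymptotic completeness. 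The main obstacle is precisely the modifier step: constructing $\varphi_j^\pm$ and $b_j^\pm$ so that the eikonal error and all symbol bounds are uniform in the channel index $j$ over the energy window, and verifying that the resulting Cook integrand is genuinely integrable. This is where the long-range hypothesis \eqref{smooth_long_range_k} and the radiation (Kato-smoothness) estimates behind Theorem \ref{Thm_Kato_Smooth_Operators} must be combined with care.
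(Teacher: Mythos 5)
Your overall architecture coincides with the paper's: interpose a comparison operator on the cylinder, reduce the genuinely long-range part to one-dimensional scattering channel by channel via the spectral decomposition of $P$, build stationary (Isozaki--Kitada type) modifiers localized to the outgoing/incoming regions $\pm r\rho>0$, prove isometry and the vanishing on $\mathcal{H}_f^{\mp}$ by non-stationary phase, and assemble everything with the chain rule. The differences are in the technical engine. First, you solve the eikonal equation exactly, $\varphi_j^{\pm}=r\rho+\int^r(\sqrt{\rho^2-\lambda_j k(s)}-\rho)\,ds$, whereas the paper (Lemma \ref{Lemma_modifier}) constructs $\Phi_\lambda^{\pm}$ by $[\nu_k^{-1}]$ successive approximations; your exact solution is cleaner for the pure-$k$ case covered by the statement, while the iterative scheme is what lets the paper also absorb a long-range coefficient $a_1^L(r)$ in front of $D_r^2$ (see Theorem \ref{Thm_wave_operators_k_long-range}). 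Second, and more substantively, you propose Cook's method for $W^{\pm}(H_k,H_0;J^{\pm})$, while the paper never uses Cook: it shows via the oscillating-symbol calculus of the appendix that $T_\lambda^{\pm}=H_{L,\lambda}J_\lambda^{\pm}-J_\lambda^{\pm}H_{0,\lambda}$ is a PDO with symbol in $\mathcal{S}^{-1-\epsilon}$, factors it through $\langle r\rangle^{-(1+\epsilon)/2}$, and invokes the Kato smooth perturbation theory (Theorem \ref{Kato's_smooth_method}). The payoff of the paper's route is that it yields $W^{\pm}(H_L,H_0;J^{\pm})$ and $W^{\pm}(H_0,H_L;(J^{\pm})^{*})$ simultaneously and as mutual adjoints, which is exactly what the completeness statement needs.

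That last point is where your plan is thinnest: Cook's method in the forward direction only needs propagation estimates for the explicit group $e^{-itH_0}$, but the reverse operators $W^{\pm}(H_0,H_k;(J^{\pm})^{*})$ require control of $e^{-itH_k}$ for long-range $k$, which is not available by stationary phase. You do gesture at the limiting absorption principle and radiation estimates here, and indeed the clean fix is to abandon Cook altogether at this step and argue as the paper does, from the factorization of $T_\lambda^{\pm}$ into locally $H_{0,\lambda}$- and $H_{k,\lambda}$-smooth operators (the one-dimensional Mourre estimate for $D_r^2+\lambda k$ supplies the needed smoothness). Finally, the uniformity in the channel index $\lambda_j$ that you flag as the main obstacle is real --- the turning points and the symbol seminorms of $e^{i\varphi_j^{\pm}}b_j^{\pm}$ depend on $\lambda_j$, and the cutoffs $\eta$, $\chi_\lambda^{\pm}$ must be chosen channel-dependently --- but the paper faces exactly the same issue in the direct integral $J^{\pm}=\int J_\lambda^{\pm}\,dE_P(\lambda)$ and does not treat it in more detail than you do, so this is not a defect specific to your proposal.
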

The construction of modifiers $J^{\pm}$ will be given in Section \ref{Two-space_scattering}.
We can also admit $a_{1}$ to have a long-range part which depends only on $r$. For details, see Section \ref{Two-space_scattering}.

There is a long history on spectral and scattering theory for Schr\"{o}dinger operators (see, for example, \cite{RS72-80}, \cite{Ya00} and references therein). Much of works are connected to differential operators on a Euclidean space. The spectral properties of Laplace operators on a class of non-compact manifolds were studied by Froese, Hislop and Perry \cite{FH89, FHP91}, and Donnelly \cite{D99} using the Mourre theory (see, the original paper Mourre \cite{Mo81}, and Perry, Sigal, and Simon\cite{PSS81}). We follow the settings in Froese and Hislop \cite{FH89}, and Theorem \ref{Thm_Mourre_Theory} may be seen as a direct generalization of \cite{FH89}. We note that only the case with $\nu =  1$ is treated in \cite{FH89}.

In early 1990s, Melrose introduced a new framework of scattering theory on a class of Riemannian manifolds with metrics called scattering metrics (see \cite{M95} and references therein).
He and the other authors have studied Laplace operators on such manifolds. They also studied the absolute scattering matrix, which is defined through the asymptotic expansion of generalized eigenfunctions.

Debi\`{e}vre, Hislop, and Sigal \cite{DHS92} studied a time-dependent scattering theory and proved its asymptotic completeness for some classes of manifolds, including manifolds with asymptotically growing ends with $\nu > 1$.

Ito and Nakamura \cite{IN10} studied a time-dependent scattering theory for Schr\"{o}dinger operators on scattering manifolds. They used  the two-space scattering framework of Kato \cite{Ka67} with a simple reference operator $D_{r}^{2}$ on a space of the form $\mathbb{R} \times N$, where $N$ is the boundary of the scattering manifold $M$.

The case where $M = M_{C}\cup M_{\infty}$ is a Riemannian manifold, the metric on $M_{\infty}$ is "close" to a warped product of $\mathbb{R}_{+}$ and a compact manifold $N$, and $L$ is the Laplace operator, fits into our framework. The function$\sqrt{k(r)}$, varies inversely with the size of $M_{\infty}=\mathbb{R}_{+}\times N$.
A typical exapmle of $k$ which satisfies \eqref{Conditions_for_k} is given by $k(r) = r^{-\alpha}, \alpha > 0$. The case $\alpha = 2$ corresponds to scattering manifolds including asymptotically Euclidean spaces. By using results of Ito and Nakamura \cite{IN10} twice, and by applying the chain rule for wave operators, we can show the existence and the asymptotic completeness of wave operators on scattering manifolds in the one-space scattering framework. Therefore our results can be considered as a generalization of \cite{IN10} for all $\alpha > 0$. In \cite{IN10}, assumptions on $a_{2}$ and $a_{3}$ are weakend to long-range perturbations.  

Our proof of the existence and the asymptotic completeness of wave operators  depends on the smooth perturbation theory of Kato \cite{Ka66} (see also Yafaev \cite{Ya92} and \cite{Ya00}). 
The Kato smoothness of $G_{0} = \langle r \rangle^{-s}$, $s > \frac{1}{2}$ in Theorem \ref{Thm_Kato_Smooth_Operators} is closely related to the limiting absorption principle. The resolvent estimates in the Mourre theory (Theorem \ref{Thm_Mourre_Theory}) imply the limiting absorption principle via a technique in Section 8 of \cite{PSS81}.
The Kato smoothness of $G_{1} = \chi_{R}\langle r \rangle^{-s}D_{r}$ will be obtained in a similar way, but we have to extend the technique in \cite{PSS81} from $\alpha =1$ to $\alpha =2$ (Lemma \ref{Lem_for_LAP2} (i)). 
The Kato-smoothness of $G_{2} = \chi_{R}\langle r \rangle^{-\frac{1}{2}}(kP)^\frac{1}{2}$ is called the radiation estimates. Our proof is quite similar to the one \cite{Ya93}, which relies on the commutator method (see Putnam \cite{Pu67} and Kato \cite{Ka68}).

The limiting absorption principle suffices to show the asymptotic completeness in the case of two-particle Hamiltonians with short-range scalar potentials. However, radiation estimates are crucial in scattering for long-range potentials on a Euclidean space (see Yafaev \cite{Ya00}). In this paper, we found that radiation estimates are also useful for handling short-range metric perturbations and magnetic potentials.
 We hope that we can also construct appropriate modifiers so that the technique of Yafaev can be applied to show the existence and the asymptotic completeness of modified wave operators with long range perturbations in our settings.

In the two-space scattering, essentially we only need to examine wave operators for the pair $(H_{k}, H_{0})$. However, since $P$ commutes with both of $H_{k}$ and $H_{0}$, it reduces to the $1$-dimensional scattering. When $k$ is short-range, we only need a narutal identifier. When $k$ is long-range, we will construct a $1$-dimensional modifiers for the corresponding $1$-dimensional long-range scattering.

\textbf{ACKNOWLEDGMENTS}

I would like to express my thank to my supervisor, Professor Shu Nakamura
for providing me various supports and encouragements.

\section{Application of Mourre Theory}\label{Application of Mourre Theory}

In this section, we prove Theorem \ref{Thm_Mourre_Theory}. 
For the sake of completeness, we give a detailed proof. But methods and proofs used here are almost the same as those in \cite{FH89} and \cite{DHS92}, where $\nu = 2$ and $\nu =1$, respectively, are assumed.
We will prove the Mourre estimate under the condition $\nu > 0$. The index $\nu$ will explicitly appear, for example,  in Lemma \ref{Lem_of_boundedness_of_commutators_of_E} and Lemma \ref{Lem_of_boundedness_of_E}.

We first quote the Mourre theory. 
We define a scale of spaces associated to a self-adjoint operator $L$.
\begin{Def}[Scale of spaces]
Let $L$ be a self-adjoint operator on a Hilbert space $\mathcal{H}$.
For $ s \geq 0$ define $\mathcal{H}_{s} = D((1+|L|)^{\frac{s}{2}}))$ with  the graph norm
\begin{gather*}
\| \psi \|_{s} := \|  (1+|L|)^{\frac{s}{2}}) \psi \|.
\end{gather*}
Define $\mathcal{H}_{-s}$ to be the dual spaces of $\mathcal{H}_{s}$ thought of as the closure of $\mathcal{H}$ in the norm 
\begin{gather*}
\| \psi \|_{-s} := \|  (1+|L|)^{-\frac{s}{2}}) \psi \|.
\end{gather*}
\end{Def}
\begin{Def}[Conjugate Operators]\label{Conjugate_Operators}
Let $L$ be a self-adjoint operator on a Hilbert space $\mathcal{H}$ and $\mathcal{H}_{s}$ be the scale of spaces associated to $L$.
A self adjoint operator $A$ is called a conjugate operator of $L$ if
\begin{enumerate}
\item $D(A) \cap \mathcal{H}_{2}$ is dense in $\mathcal{H}_{2}$,
\item the form $[L, iA]$ defined on $D(A)\cap \mathcal{H}_{2}$ is bounded below and extends to a bounded operator from $\mathcal{H}_{2}$ to $\mathcal{H}_{-1}$,
\item there is a self-adjoint operator $L_{0}$ with $D(L_{0}) = D(L)$ such that $[L_{0}, iA]$ extends to a bounded map from $\mathcal{H}_{2}$ to $\mathcal{H}$, and $D(A) \cap D(L_{0}A)$ is a core for $L_{0}$,
\item the form $[[L, iA], iA]$ extends from $\mathcal{H}_{2}\cap D(LA)$ to a bounded operator from $\mathcal{H}_{2}$ to $\mathcal{H}_{-2}$.
\item $e^{i t A}$ leaves $\mathcal{H}_{2}$ invariant and for each $\psi \in \mathcal{H}_{2}$, $\sup _{|t| \leq 1} \| e^{i t A} \psi \|_{2} < \infty$.
\end{enumerate}
\end{Def}

\begin{Def}[Mourre Estimate]\label{Def_of_Mourre_Estimate}
A self-adjoint operator $L$ satisfies a Mourre estimate on an interval $\Lambda \subset \mathbb{R}$ with conjugate operator $A$ if A is a conjugate operator of $L$ such that 
there exist a positive constant $\alpha$ and a compact operator $K$ such that
\begin{gather*}
E_{\Lambda}[L, iA]E_{\Lambda} \geq \alpha E_{\Lambda} + K.
\end{gather*}
Here $E_{\Lambda}=E_{\Lambda}(L)$ is the spectral projection for $L$.
We say that $L$ satisfies a Mourre estimate at a point $\lambda \in \mathbb{R}$ if
there exists an interval $\Lambda$ containing $\lambda$ such that $L$ satisfies a Mourre estimate on $\Lambda$.
\end{Def}

Now we state the Mourre theory.

\begin{Thm}[Mourre]\label{Thm_General_Mourre_Theory}
Suppose that a self-adjoint operator $L$ satisfies a Mourre estimate at $\lambda \in \mathbb{R}$ with a conjugate operator $A$. Then there exists an open interval $\Lambda$ containing $\lambda$ such that $L$ has finitely many eigenvalues in $\Lambda$ and each eigenvalue has finite multiplicity. If $\lambda \notin \sigma_{pp}(L)$, then there exists an open interval $\Lambda$ containing $\lambda$ such that $L$ has no singular continuous spectrum in $\Lambda$ and for $s > \frac{1}{2}$,
\begin{gather*}
\sup_{z \in \Lambda_{\pm} = \Lambda \pm i \mathbb{R}_{+}} \| (|A| + 1)^{-s} (L -z)^{-1} (|A| + 1)^{-s} \| < \infty.
\end{gather*}
\end{Thm}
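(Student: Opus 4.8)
The plan is to follow the abstract argument of Mourre \cite{Mo81} in the form refined by Perry, Sigal and Simon \cite{PSS81}, splitting the proof into two essentially independent parts: control of the point spectrum through the virial theorem, and the limiting absorption principle through a differential inequality for a regularized resolvent. Throughout, the five conditions of Definition \ref{Conjugate_Operators} are exactly what makes the formal manipulations with the unbounded conjugate operator $A$ rigorous, and I expect the bookkeeping of the resulting error terms to be the main technical burden.

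First I would establish the virial theorem: if $L\psi = \mu \psi$ then $\langle \psi, [L, iA]\psi\rangle = 0$. Since $\psi$ need not lie in $D(A)$, I would regularize by $A_{\varepsilon} = A(1 + i\varepsilon A)^{-1}$, a bounded self-adjoint approximation of $A$, and use the invariance of $\mathcal{H}_{2}$ under $e^{itA}$ from condition (v) together with the commutator bounds (ii)--(iv) to justify passing to the limit $\varepsilon \downarrow 0$ in $\langle \psi, [L, iA_{\varepsilon}]\psi\rangle$. Granting the virial theorem, I would prove that $L$ has only finitely many eigenvalues in a neighbourhood of $\lambda$, each of finite multiplicity. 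Indeed, if not, there is an orthonormal sequence $\{\psi_{n}\}$ of eigenvectors with eigenvalues in $\Lambda$; applying the Mourre estimate of Definition \ref{Def_of_Mourre_Estimate} and the virial theorem gives
\begin{gather*}
0 = \langle \psi_{n}, E_{\Lambda} [L, iA] E_{\Lambda} \psi_{n}\rangle \geq \alpha + \langle \psi_{n}, K \psi_{n}\rangle,
\end{gather*}
so $\langle \psi_{n}, K\psi_{n}\rangle \leq -\alpha < 0$; but $\psi_{n} \rightharpoonup 0$ weakly while $K$ is compact, forcing $\langle \psi_{n}, K\psi_{n}\rangle \to 0$, a contradiction. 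This yields the first assertion.

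Next, assuming $\lambda \notin \sigma_{pp}(L)$, I would shrink $\Lambda$ to an interval containing $\lambda$ but no eigenvalue of $L$. Since $E_{\Lambda} \to 0$ strongly as $|\Lambda| \to 0$ and $K$ is compact, $\|E_{\Lambda} K E_{\Lambda}\| \to 0$, so the Mourre estimate improves to the strict estimate
\begin{gather*}
E_{\Lambda}\, i[L, A]\, E_{\Lambda} \geq \alpha' E_{\Lambda}, \qquad \alpha' > 0,
\end{gather*}
on a possibly smaller interval. With this in hand I would run the differential inequality argument. Writing $z = \mathrm{Re}\,z + i\mu$ with $\mu > 0$ and $\mathrm{Re}\,z \in \Lambda$, I introduce the regularized resolvent $G_{\varepsilon}(z) = (L - z - i\varepsilon M_{\varepsilon})^{-1}$, where $M_{\varepsilon}$ is a bounded regularization of $E_{\Lambda}\, i[L,A]\, E_{\Lambda}$ built from $A_{\varepsilon}$, and study $F_{\varepsilon}(z) = \langle A\rangle^{-s} G_{\varepsilon}(z) \langle A\rangle^{-s}$ for $s > \tfrac12$. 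Differentiating in $\mu$ and in $\varepsilon$, using the strict lower bound on $M_{\varepsilon}$ to absorb the leading term and the double-commutator bound (iv) to control the remainder via Cauchy--Schwarz, one obtains a Riccati/Gr\"onwall-type differential inequality whose integration gives a bound on $\|F_{\varepsilon}(z)\|$ uniform as $\mu \downarrow 0$ and $\varepsilon \downarrow 0$. Passing to the limit yields exactly
\begin{gather*}
\sup_{z \in \Lambda_{\pm}} \| (|A| + 1)^{-s} (L - z)^{-1} (|A| + 1)^{-s}\| < \infty.
\end{gather*}

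Finally, the uniform boundedness, and in fact H\"older continuity, of the boundary values of the weighted resolvent as $\mu \downarrow 0$ implies, through Stone's formula applied to vectors in $(|A|+1)^{-s}\mathcal{H}$, that the spectral measure of $L$ has no singular continuous part in $\Lambda$, giving $\sigma_{sc}(L) \cap \Lambda = \emptyset$. The main obstacle is the differential inequality step: it requires a careful double regularization, in $\varepsilon$ to tame the unbounded $A$ and in $\mu$, and one must keep every commutator error term controlled by the spectral gap $\alpha'$ uniformly in both parameters before the limits are taken. The algebra of these estimates, rather than any single conceptual difficulty, is where the real work lies.
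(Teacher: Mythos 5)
The paper gives no proof of this theorem at all — it simply refers to Mourre \cite{Mo81} and Perry--Sigal--Simon \cite{PSS81} — and your outline is exactly the standard argument from those references: the virial theorem plus compactness of $K$ to control the point spectrum, the strict Mourre estimate on a shrunken interval, and the doubly regularized differential-inequality argument for the weighted resolvent, followed by Stone's formula for the absence of singular continuous spectrum. Your sketch correctly identifies all the key steps and the genuine technical difficulty (the commutator bookkeeping in the differential inequality), so it matches the proof the paper implicitly relies on.
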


We refer to \cite{Mo81} and \cite{PSS81} for the proof of this theorem.

In the following of this section we will show that the hypotheses of the Theorem \ref{Thm_General_Mourre_Theory} will be satisfied for the case stated in section \ref{Introduction}.
\begin{Lem}\label{Lem_Rellich}
Suppose $f \in C_{0}^{\infty}(\mathbb{R})$, D is a differential operator with smooth coefficients, and $\chi$ is a smooth cut-off function with compact support. Then $\chi D f(L)$ and $\chi D f(L_{0})$ are compact operators from $L^{2}(M)$ to $L^{2}(M)$.
\end{Lem}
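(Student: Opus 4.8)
The plan is to recognize this as a Rellich-type statement and reduce it to the compactness of a Sobolev embedding over a precompact region. I treat $\chi D f(L)$; the argument for $\chi D f(L_{0})$ is identical. Let $d$ denote the order of the differential operator $D$ and let $K = \operatorname{supp}\chi$, which is compact in $M$. Fix a precompact open set $\Omega$ with smooth boundary such that $K \subset \Omega \Subset M$.

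The first step uses the functional calculus together with the compact support of $f$. Since $f \in C_{0}^{\infty}(\mathbb{R})$, for every $m \in \mathbb{N}$ the function $\lambda \mapsto \lambda^{m}f(\lambda)$ is bounded and compactly supported, so $L^{m}f(L) = g_{m}(L)$ with $g_{m}(\lambda) = \lambda^{m}f(\lambda)$ is a bounded operator on $L^{2}(M)$. Hence $\operatorname{Ran}(f(L)) \subseteq D(L^{m})$ for all $m$, with the a priori bound $\|L^{m}f(L)u\| \leq C_{m}\|u\|$ for $u \in L^{2}(M)$.

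The second step is local elliptic regularity. Although $L = D_{r}^{2}+k(r)P+E$ degenerates at infinity because $k(r)\to 0$, on the precompact set $\Omega$ the coefficient $k$ is bounded below away from zero and the coefficients of $E$ are smooth, so $L$ is uniformly elliptic of order two on $\Omega$. Interior elliptic estimates therefore give, for $v = f(L)u$,
\begin{gather*}
\|v\|_{H^{2m}(\Omega)} \leq C\bigl(\|L^{m}v\|_{L^{2}(M)} + \|v\|_{L^{2}(M)}\bigr) \leq C'\|u\|_{L^{2}(M)}.
\end{gather*}
Applying the differential operator $D$ of order $d$ loses at most $d$ derivatives, and multiplication by the smooth compactly supported $\chi$ confines the support to $K$, so $\chi D f(L)$ is bounded from $L^{2}(M)$ into $H^{2m-d}$ with support in $K\subset\Omega$. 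Choosing $m$ large enough that $2m-d \geq 1$, we obtain a bounded map $\chi D f(L)\colon L^{2}(M) \to H^{1}_{0}(\Omega)$.

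The final step is the Rellich--Kondrachov theorem: since $\Omega$ is precompact with smooth boundary, the embedding $H^{1}_{0}(\Omega)\hookrightarrow L^{2}(\Omega)$ is compact. Composing the bounded map above with this compact embedding and the inclusion $L^{2}(\Omega)\hookrightarrow L^{2}(M)$ shows that $\chi D f(L)$ is compact on $L^{2}(M)$. I expect the only delicate point to be the identification of the abstract operator domain $D(L^{m})$ with the local Sobolev space $H^{2m}(\Omega)$; this is where one must invoke that ellipticity is needed only locally, so the degeneration of $k$ at infinity is harmless because $\Omega$ is precompact and $k$ is bounded below there.
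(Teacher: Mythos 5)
Your proposal is correct and follows essentially the same route as the paper: the paper likewise fixes a precompact smooth domain $\Omega$ containing $\operatorname{supp}\chi$, notes that $\chi D f(L)$ and $\chi D f(L_{0})$ map $L^{2}(M)$ into $H^{s}(\Omega)$ for any $s>0$, and concludes by the compactness of the embedding $H^{s}(\Omega)\hookrightarrow L^{2}(\Omega)$ (Rellich). Your write-up simply supplies the details the paper leaves implicit, namely that $\operatorname{Ran} f(L)\subseteq D(L^{m})$ by functional calculus and that interior elliptic regularity converts this into local Sobolev bounds.
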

\begin{proof}
Let $\Omega$ be a bounded domain with smooth boundary which contains supp $\chi$. Then $\chi D f(L_{0})$ and $\chi D f(L)$ map $L^{2}(M)$ to a Sobolev space $H^{s}(\Omega)$ for any $s > 0$.
But $H^{s} \hookrightarrow L^{2}(\Omega) \hookrightarrow L^{2}(M)$, and the first embedding is compact by Rellich's theorem.
\end{proof}

\begin{Lem}\label{Lem_of_boundedness_of differential_operators}
Let $\mathcal{H}_{s}$ be the scale of spaces associated with $L_{0}$.
Then
\begin{enumerate}
\item $\chi_{R} D_{r}: \mathcal{H}_{s} \to \mathcal{H}_{s-1}$ is bounded for $s \in [-1, 2]$, \label{eq_chiRDr}
\item $\chi_{R} D_{r}^{2}: \mathcal{H}_{s} \to \mathcal{H}_{s-2}$ is bounded for $s \in [0, 2]$, \label{eq_chiRDr2}
\item $\chi_{R} ( k P +1)^{\frac{1}{2}}: \mathcal{H}_{s} \to \mathcal{H}_{s-1}$ is bounded for $s \in [-1, 2]$,\label{eq_chiRkPhalf}
\item $\chi_{R} ( k P +1): \mathcal{H}_{s} \to \mathcal{H}_{s-2}$ is bounded for $s \in [0, 2]$.\label{eq_chiRkP}
\end{enumerate}
\end{Lem}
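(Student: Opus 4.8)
The plan is to reduce all four statements to two core estimates and then propagate them across the full range of $s$ by interpolation and duality. Since $P\ge 0$ and $k>0$, the operator $L_0=D_r^2+kP$ is nonnegative, so $1+|L_0|=1+L_0$ and $\mathcal{H}_s=D((1+L_0)^{s/2})$; the cut-off $\chi_R$ serves to localize onto the end $M_\infty$, where the expression $D_r^2+kP$ is meaningful, so that on the support of $\chi_R$ we may freely use $L_0=D_r^2+kP$. The two core estimates are the form bound
\begin{gather*}
\|D_r u\|^2+\|(kP)^{1/2}u\|^2\le \langle u,L_0 u\rangle\le \|u\|_1^2,
\end{gather*}
which is immediate because $D_r^2$ and $kP$ are nonnegative quadratic forms with $D_r^2+kP=L_0$, and the elliptic estimate
\begin{gather*}
\|D_r^2 u\|^2+\|kP u\|^2\le C\|u\|_2^2.
\end{gather*}
The first immediately yields (i) and (iii) at $s=1$ (inserting the bounded factor $\chi_R$), and the second yields (ii) and (iv) at $s=2$.

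The heart of the matter is the elliptic estimate, and this is where I expect the main obstacle, because $D_r^2$ and $kP$ do not commute: their cross term must be shown to be essentially positive up to lower order. Expanding,
\begin{gather*}
\|L_0 u\|^2=\|D_r^2 u\|^2+\|kP u\|^2+2\,\mathrm{Re}\langle D_r^2 u,kP u\rangle,
\end{gather*}
and using $[D_r,kP]=-ik'P$ one finds $\mathrm{Re}\langle D_r^2 u,kP u\rangle=\|(kP)^{1/2}D_r u\|^2+\mathrm{Im}\langle D_r u,k'P u\rangle$. The leading term is nonnegative, and the remainder is controlled by invoking $|k'|\le C r^{-1}k\le Ck$ from \eqref{Conditions_for_k}, giving $\|k'P u\|\le C\|kP u\|$ and hence, by Cauchy--Schwarz and Young's inequality, $|\langle D_r u,k'P u\rangle|\le \varepsilon\|kP u\|^2+C_\varepsilon\|D_r u\|^2$. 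Since $\|D_r u\|^2\le \|u\|_1^2\le C\|u\|_2^2$, choosing $\varepsilon$ small lets me absorb $\|kP u\|^2$ into the left side and conclude the elliptic estimate; the bound $|k''|\le C r^{-2}k$ enters when the commutator $[D_r^2,k]$, a first-order operator whose coefficients involve $k''$ and $k'$, is written out. The same commutator identity, applied once more, upgrades the first-order statements to $s=2$: writing $\langle D_r u,L_0 D_r u\rangle=\langle D_r^2 u,L_0 u\rangle+i\langle D_r u,k'P u\rangle$ and bounding each term by the elliptic estimate gives $\|D_r u\|_1\le C\|u\|_2$, i.e. (i) (and similarly (iii)) at $s=2$.

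Finally I would fill in the remaining values of $s$ by complex interpolation of the scale $\{\mathcal{H}_s\}$ together with duality $\mathcal{H}_{-s}=(\mathcal{H}_s)^*$. For the first-order operators the endpoints $s=2$ (just established) and $s=-1$ suffice: the latter is the adjoint statement for $D_r\chi_R=\chi_R D_r-i\chi_R'$ mapping $\mathcal{H}_2\to\mathcal{H}_1$, where the term $\chi_R'$ is handled by noting that multiplication by a smooth function with bounded derivatives preserves the scale (its commutator with $L_0$ is first order with bounded coefficients). Interpolating covers $s\in[-1,2]$. For the second-order operators the endpoints $s=2$ and $s=0$ suffice, the latter again obtained by duality from a statement already proved up to commutators with $\chi_R$, and interpolation covers $s\in[0,2]$. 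Throughout, every commutator $[\,\cdot\,,\chi_R]$ is of lower order with bounded coefficients and is absorbed by the estimates already in hand; this localization bookkeeping is routine once the two core estimates are established.
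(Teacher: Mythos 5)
Your proposal is correct and follows essentially the same route as the paper: establish the endpoint estimates by expanding the square of (a localized) $L_{0}$, drop the manifestly nonnegative cross term and control the remainder via $|k'|\leq Ck$ and $|k''|\leq Cr^{-2}k$, then cover the full range of $s$ by duality and complex interpolation, exactly as in the paper's proof of \eqref{eq_chiRDr_L2} and \eqref{eq_chiRDr2_L2}. The one place where your write-up is more cavalier than the paper: since $L_{0}=D_{r}^{2}+kP$ only on $(1,\infty)\times N$ (so $L_{0}$ is merely bounded below, not nonnegative, and $D_{r}$, $kP$ are not globally defined), the ``immediate'' form bound is not obtained by inserting $\chi_{R}$ into a global inequality but requires the localization $\chi_{R}(L_{0}+C_{1})\chi_{R}\leq L_{0}+C$ — this is precisely the IMS-formula step the paper carries out explicitly, and it is the content of, not a corollary to, your ``routine bookkeeping.''
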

\begin{proof}[Proof of Lemma \ref{Lem_of_boundedness_of differential_operators}]

We begin by proving that
\begin{gather}
\| \chi_{R}D_{r} (L_{0} + C)^{-\frac{1}{2}} \| \leq 1 \label{eq_chiRDr_L2}
\end{gather}
for some constant $C$. Choose a positive constant $C_{1}$ so that $L_{0} + C_{1}$ is a positive operator.
Let $\tilde{\chi}_{R} = (1 - \chi_{R})^{-\frac{1}{2}}$.
The IMS localozation formula gives
\begin{gather*}
L_{0}+C_{1} = \chi_{R}(L_{0}+C_{1})\chi_{R} + \tilde{\chi}_{R}(L_{0}+C_{1})\tilde{\chi}_{R} - (\chi_{R}^{\prime})^{2} - (\tilde{\chi}_{R}^{\prime})^{2}.
\end{gather*}
This implies
\begin{align*}
L_{0}+C_{1} 
& \geq \chi_{R}D_{r}^{2} \chi_{R} - (\chi_{R}^{\prime})^{2} - (\tilde{\chi}_{R}^{\prime})^{2}\\
& \geq D_{r}\chi_{R}^{2}D_{r} -\chi_{R}^{\prime \prime} \chi_{R} -  (\chi_{R}^{\prime})^{2} - (\tilde{\chi}_{R}^{\prime})^{2}
\end{align*}
as form inequalities on $C_{0}^{\infty}$.
Since
\begin{gather*}
|\chi_{R}^{\prime \prime} \chi_{R} -  (\chi_{R}^{\prime})^{2} - (\tilde{\chi}_{R}^{\prime})^{2}|
\leq \frac{C}{R^{2}},
\end{gather*}
this implies 
\begin{gather*}
D_{r} \chi_{R}^{2}D_{r} \leq L_{0} + C.
\end{gather*}
This shows for $\phi \in C_{0}^{\infty}$,
\begin{gather}
\|\chi_{R} D_{r} \phi \| \leq \| (L_{0}+C)^{\frac{1}{2}} \phi \|.\label{eq_chiRDr_C0infinity}
\end{gather}
Since $C_{0}^{\infty}$ is a core for $(L_{0}+C)^{\frac{1}{2}}$, we can find for every $\phi \in D((L_{0}+C)^{\frac{1}{2}})$, a sequence  $\phi_{n} \in C_{0}^{\infty}$ such that $\phi_{n} \to \phi$ and $(L_{0}+C)^{\frac{1}{2}} \phi_{n} \to (L_{0}+C)^{\frac{1}{2}} \phi$. Then
\begin{align*}
|( D_{r}\chi_{R} \psi, \phi)| 
&= \lim_{n \to \infty} |(D_{r} \chi_{R} \psi, \phi)|\\
&= \lim_{n \to \infty} |(\psi, \chi_{R} D_{r} \phi)|\\
&= \limsup_{n \to \infty} \|\psi\| \|(L_{0}+C)^{\frac{1}{2}}\phi_{n}\|\\
&\leq \|\psi\| \|(L_{0}+C)^{\frac{1}{2}}\phi\|
\end{align*}
which shows that $\phi \in D(\chi_{R}D_{r})$ and that \eqref{eq_chiRDr_C0infinity} holds for any $\phi \in D((L_{0} + C)^{\frac{1}{2}})$. Writing $\phi = (L_{0} + C)^{-\frac{1}{2}} \psi$ for $\psi \in L^{2}$, we see that this implies \eqref{eq_chiRDr_L2}.

Next we will prove that
\begin{gather}
\|\chi_{R} D_{r}^{2} (L_{0}+C)^{-1} \| \leq C.\label{eq_chiRDr2_L2}
\end{gather}
With $C_{1}$ as above,
\begin{align*}
(L_{0} + C_{1})^{2}
&\geq (L_{0} + C_{1})\chi_{R}^{2}(L_{0} + C_{1})\\
&= D_{r}^{2} \chi_{R}^{2} D_{r}^{2} + D_{r}^{2} \chi_{R}^{2} (kP + C_{1}) + (kP + C_{1}) \chi_{R}^{2} D_{r}^{2} + (kP + C_{1})^{2} \chi_{R}^{2}\\
&= D_{r}^{2} \chi_{R}^{2} D_{r}^{2} + 2D_{r} \chi_{R}^{2} (kP+C_{1}) D_{r} - (\chi_{R}^{2} (kP+C_{1}) )^{\prime \prime} + ( kP + C_{1} )^{2} \chi_{R}^{2}.
\end{align*}
Using the fact that $|k^{\prime}|$ and $|k^{\prime \prime}|$ are bounded by a constant times $k$, we see that
\begin{gather*}
(\chi_{R}^{2} (kP+C_{1}) )^{\prime \prime} \leq C \chi (kP+C_{1})\chi
\end{gather*}
for some cut-off function $\chi$. Using the IMS formula again, this implies
\begin{gather*}
(\chi_{R}^{2} (kP+C_{1}) )^{\prime \prime} \leq C (L_{0} + C)
\end{gather*}
for some $C$.
Since $2D_{r} \chi_{R}^{2} (kP+C_{1}) D_{r} + ( kP + C_{1} )^{2} \chi_{R}^{2} \geq 0$, we obtain
\begin{gather*}
(L_{0} + C_{1})^{2} \geq D_{r}^{2} \chi_{R}^{2} D_{r}^{2} - C(L_{0} + C),
\end{gather*}
which implies for some $C$,
\begin{gather*}
D_{r}^{2} \chi_{R}^{2} D_{r}^{2} \leq C(L_{0} + C)^{2},
\end{gather*}
which leads to \eqref{eq_chiRDr2_L2} as in the proof of \eqref{eq_chiRDr_L2}.

A similar argument shows that 
\begin{gather}
\|D_{r}^{2} \chi_{R}  (L_{0}+C)^{-1} \| \leq C. \label{eq_Dr2chiR_L2}
\end{gather}
Now by complex interpolation, \eqref{eq_chiRDr2_L2} and \eqref{eq_Dr2chiR_L2}  implies
\begin{gather*}
\|(L_{0}+C)^{-1+z} D_{r}^{2} \chi_{R}  (L_{0}+C)^{-z} \| \leq C.
\end{gather*}
for Re$z$ in $[0, 1]$, which implies \ref{eq_chiRDr2} of Lemma.

To prove \ref{eq_chiRDr} using complex interpolation, one need to prove
\begin{gather}
\| (L_{0} + C)^{-1} \chi_{R} D_{r} (L_{0} + C)^{\frac{1}{2}}\| \leq C, \label{eq_L-1chiRDrLhalf}\\
\| (L_{0} + C)^{\frac{1}{2}} \chi_{R} D_{r} (L_{0} + C)^{-1}\| \leq C. \label{eq_LhalfchiRDrL-1}
\end{gather}
Examining \eqref{eq_L-1chiRDrLhalf}, we see that
\begin{gather*}
\| (L_{0} + C)^{-1} \chi_{R} D_{r} (L_{0} + C)^{\frac{1}{2}}\| \\
\leq \| \chi_{R} D_{r} (L_{0} + C)^{\frac{1}{2}}\| + \| (L_{0} + C)^{-1} [L_{0}, \chi_{R} D_{r}] (L_{0} + C)^{-\frac{1}{2}}\|.
\end{gather*}
The first term on the right hand side is bounded by \eqref{eq_chiRDr_L2}.
The second term can be decomposed into two terms according to the following equation:
\begin{gather*}
[L_{0}, \chi_{R}D_{r}] = [D_{r}^{2}, \chi_{R}D_{r}] + [kP, \chi_{R}D_{r}].
\end{gather*}
The first one is bounded using \ref{eq_chiRDr2} of Lemma;
the second is bounded by
\begin{gather*}
[\chi_{R}, i\chi_{R}D_{r}] = \chi_{R} k P \leq C\chi_{R} k P \leq C (L_{0} + C)
\end{gather*}
and an argument similar to the one above.
This gives \eqref{eq_L-1chiRDrLhalf}.
\eqref{eq_LhalfchiRDrL-1} follows similarly.

\ref{eq_chiRkPhalf} and \ref{eq_chiRkP} follow in a similar way.
\end{proof}

\begin{Lem}\label{Lem_of_boundedness_of_E}
Let $L, L_{0}$ and $E$ be as in Theorem \ref{Thm_Mourre_Theory} and let $\mathcal{H}_{s}$ be the scale of spaces associated with $L_{0}$. Then
\begin{enumerate}
\item $\langle r \rangle^{\nu} E : \mathcal{H}_{s} \to \mathcal{H}_{s-2}$ is bounded for $s \in [0, 2]$,
\item by taking $R$ large enough in the definition of $E$, we may assume that the relative $L_{0}$-bound of $E$ is less than 1,
\item $(L_{0} -z)^{-1} - (L - z)^{-1}$ is compact for $\text{Im}(z) \neq 0$.
\end{enumerate}
\end{Lem}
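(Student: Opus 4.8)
The plan is to handle the three parts in order, leaning on the mapping properties of Lemma~\ref{Lem_of_boundedness_of differential_operators}, the decay hypotheses \eqref{Perturbation_Hypothesis}, and the Rellich-type compactness of Lemma~\ref{Lem_Rellich}.

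For (i), I would expand $E$ according to \eqref{Def_of_E} into its nine constituent terms, each of the shape $T_L\,c\,T_R$ with $T_L,T_R\in\{1,D_r,\sqrt{k}\tilde D_\theta\}$ and $c\in\{V,b_j,a_j\}$. Since $\nu\le\nu_c$ for every coefficient, the weighted coefficient $\langle r\rangle^\nu c$ and all its derivatives are bounded by \eqref{Perturbation_Hypothesis}. Forming $\langle r\rangle^\nu E$ and commuting the weight $\langle r\rangle^\nu$ inward to sit next to $c$, every commutator $[T_L,\langle r\rangle^\nu]$ is a lower-order operator whose coefficient is $O(r^{\nu-1})$, so after multiplication by $c$ it is again bounded; thus $\langle r\rangle^\nu E$ is a genuine second-order operator with bounded coefficients built from the blocks $D_r$ and $\sqrt{k}\tilde D_\theta$. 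Because $\tilde D_\theta^{\,*}\tilde D_\theta\le C(P+1)$ and $k$ commutes with $P$, the block $\sqrt{k}\tilde D_\theta$ is dominated by $(kP+1)^{1/2}$; inserting the cut-off $\chi_R$ (legitimate since the coefficients are supported in $M_\infty$) I can then read off the mapping property of each monomial directly from Lemma~\ref{Lem_of_boundedness_of differential_operators}: a first-order block sends $\mathcal H_s\to\mathcal H_{s-1}$ and a second-order product sends $\mathcal H_s\to\mathcal H_{s-2}$ for $s\in[0,2]$. Summing the finitely many terms gives the claim.

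For (ii), I would apply (i) with $s=2$, so that $\langle r\rangle^\nu E:\mathcal H_2\to\mathcal H$ is bounded, say with norm $B$; recall $\mathcal H_2=D(L_0)$. Splitting with the cut-off of \eqref{Def_of_A}, $E=\chi_R E+(1-\chi_R)E$, the far piece obeys $\|\chi_R E\phi\|\le\|\chi_R\langle r\rangle^{-\nu}\|_\infty\,B\,\|\phi\|_2\le\langle R/2\rangle^{-\nu}B(\|L_0\phi\|+C\|\phi\|)$, whose relative bound tends to $0$ as $R\to\infty$. The near piece $(1-\chi_R)E$ has coefficients supported in the precompact region $\{r\le R\}\cap M_\infty$, and $(1-\chi_R)E(L_0+C)^{-1}$ is compact by the elliptic-regularity-plus-Rellich argument of Lemma~\ref{Lem_Rellich}, so $(1-\chi_R)E$ is $L_0$-compact and contributes relative bound $0$. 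Hence the relative $L_0$-bound of $E$ is $<1$ once $R$ is large, and by Kato--Rellich $L$ is self-adjoint with $D(L)=D(L_0)=\mathcal H_2$.

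For (iii), I would start from the resolvent identity $(L-z)^{-1}-(L_0-z)^{-1}=-(L-z)^{-1}E(L_0-z)^{-1}$ and again expand $E=\sum T_L\,c\,T_R$. The point, and the main obstacle, is that $E(L_0-z)^{-1}$ is only bounded and not compact, because the leftmost factor $T_L$ is undamped; both outer resolvents must be exploited. I would split the decay symmetrically, writing $c=\langle r\rangle^{-\nu/2}d\,\langle r\rangle^{-\nu/2}$ with $d$ bounded, and regroup each term as $\bigl[(L-z)^{-1}T_L\langle r\rangle^{-\nu/2}\bigr]\,d\,\bigl[\langle r\rangle^{-\nu/2}T_R(L_0-z)^{-1}\bigr]$. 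The key building block is that $\langle r\rangle^{-\nu/2}T(L_0-z)^{-1}$, and likewise with $L$ in place of $L_0$ using $D(L)=D(L_0)$, is compact for any block $T$ of order $\le 1$: cutting off to a precompact region $\{r\le R'\}$ gives a map into the $H^1$-space of a precompact set, compact by Rellich, while the tail is $O(\langle R'\rangle^{-\nu/2})$ in norm, so the operator is a norm limit of compacts. Taking adjoints shows $(L-z)^{-1}T_L\langle r\rangle^{-\nu/2}$ is compact as well, whence each regrouped term is a product of a compact operator and bounded operators, and the finite sum is compact. I expect the only delicate bookkeeping to be the placement of the cut-offs $\chi_R$ (needed because $D_r$ and $\sqrt k\tilde D_\theta$ live only on $M_\infty$) and the domination of $\sqrt k\tilde D_\theta$ by $(kP+1)^{1/2}$.
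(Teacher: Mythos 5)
Your parts (i) and (iii) follow essentially the paper's route. For (i) the paper proves only the two endpoint bounds $\|\langle r\rangle^{\nu}E(L_{0}+i)^{-1}\|\leq C$ and $\|(L_{0}+i)^{-1}\langle r\rangle^{\nu}E\|\leq C$ and then invokes complex interpolation; your claim that each monomial can be read off directly from Lemma \ref{Lem_of_boundedness_of differential_operators} for every $s\in[0,2]$ silently assumes that multiplication by the bounded coefficient acts on the intermediate spaces $\mathcal H_{\sigma}$ with $\sigma\in(-1,1)\setminus\{0\}$, which is exactly what the interpolation step is there to avoid; you should either add that justification or interpolate as the paper does. For (iii) you split the weight symmetrically, $c=\langle r\rangle^{-\nu/2}d\,\langle r\rangle^{-\nu/2}$, and make both outer factors compact, whereas the paper puts all the decay on the left, writing $(L-z)^{-1}E(L_{0}-z)^{-1}=(L-z)^{-1}\langle r\rangle^{-\nu}\chi_{R}\cdot\langle r\rangle^{\nu}E(L_{0}-z)^{-1}$, so that only the derivative-free factor $(L-z)^{-1}\langle r\rangle^{-\nu}\chi_{R}$ need be compact while the right factor is merely bounded by (i). Both factorizations work; yours costs the extra (correct) observation that a first-order operator damped by $\langle r\rangle^{-\nu/2}$ composed with a resolvent is compact.

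The genuine gap is in (ii). You split $E=\chi_{R}E+(1-\chi_{R})E$ and assert that the near piece $(1-\chi_{R})E$ is $L_{0}$-compact by the argument of Lemma \ref{Lem_Rellich}, hence of relative bound zero. This fails for the genuinely second-order terms such as $D_{r}a_{1}D_{r}$ and $\tilde D_{\theta}ka_{3}\tilde D_{\theta}$: the operator $(1-\chi_{R})T_{L}cT_{R}(L_{0}+C)^{-1}$ consumes both derivatives that the resolvent supplies, so it maps $L^{2}$ into $L^{2}$ of a precompact set with no gain of Sobolev regularity, and Rellich gives nothing (Lemma \ref{Lem_Rellich} requires $f\in C_{0}^{\infty}$ precisely so that $f(L_{0})$ smooths into every $H^{s}$). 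In fact the relative $L_{0}$-bound of a second-order piece with compactly supported coefficients is comparable to the sup of its leading coefficients on their support, which need not be small, so the near piece cannot be discarded this way. The intended reading of (ii) --- consistent with the phrase ``in the definition of $E$'' and with the cutoffs $\chi_{R}$ the paper inserts around each term of $E$ in its proof of (i) --- is that the coefficients of $E$ are supported in $\{r\geq R/2\}$; then there is no near piece and (ii) is immediate from (i) via $\|E\phi\|\leq\sup_{r\geq R/2}\langle r\rangle^{-\nu}\cdot\|\langle r\rangle^{\nu}E\phi\|\leq CR^{-\nu}\|\phi\|_{2}$. You should either adopt that reading or supply an actual argument for the compactly supported second-order remainder; your current proof contains neither.
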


\begin{proof}[Proof of Lemma\ref{Lem_of_boundedness_of_E}]
(i) will follow by complex interpolation if we can show
\begin{gather*}
\| \langle r \rangle^{\nu} E (L_{0} + i)^{-1} \| \leq C,\\
\| (L_{0} + i)^{-1} \langle r \rangle^{\nu} E \| \leq C.
\end{gather*}
As a typical exapmle, we choose $\sqrt{k}\tilde{D}_{\theta}a_{2}D_{r}$.
Then
\begin{align*}
	&\| \langle r\rangle^{\nu} \chi_{R} \sqrt{k}\tilde{D}_{\theta}a_{2}D_{r}  \chi_{R} (L_{0} + i)^{-1} \| \\
&\leq \| \langle r \rangle^{\nu} \sqrt{k}a_{2} \tilde{D}_{\theta} \chi_{R} D_{r} (L_{0} + i)^{-1} \| 
+ \| \langle r \rangle^{\nu} \sqrt{k} (\tilde{D}_{\theta}a_{2}) \chi_{R} D_{r} (L_{0} + i)^{-1} \|\\
&\leq \sup_{r} \{ \langle r \rangle^{\nu} |a_{2}| \} \cdot \| \sqrt{k}\tilde{D}_{\theta } (kP + 1)^{-\frac{1}{2}} \| \cdot \| \chi_{R} (kP+1)^{\frac{1}{2}} D_{r}(L_{0} + i)^{-1}\|\\
&+ \sup_{r} \{ \langle r \rangle^{\nu} | \tilde{D}_{\theta} a_{2}| \} \cdot \| \sqrt{k} \chi_{R} D_{r} (L_{0} + i)^{-1}\|\\
&\leq C,
\end{align*}	
by assumptions on $a_{2}$ and Lemma \ref {Lem_of_boundedness_of differential_operators}.

To prove (iii), we use the resolvent formula
\begin{gather*}
(L_{0} - z)^{-1} - (L-z)^{-1}\\
= (L-z)^{-1}E(L_{0}-z)^{-1}\\
= (L-z)^{-1}\langle r\rangle^{-\nu} \chi_{R}  \langle r\rangle^{\nu} E(L_{0}-z)^{-1}.
\end{gather*}
The operator $(L-z)^{-1}\langle r\rangle^{-\nu} \chi_{R}$ can be approximated in norm  by operators $f(L)\chi$ considerd in Lemma \ref{Lem_Rellich}, and thus is compact while $\langle r\rangle^{\nu} E(L_{0}-z)^{-1}$ is bounded by (i). This proves (iii).
\end{proof}

\begin{Lem}\label{Lem_Stone_Weierstrass}
$D(L) = D(L_{0})$ and the scale of spaces associated to $L$ and $L_{0}$ are the same. If $f\in C_{0}^{\infty}$, then $f(L) - f(L_{0})$ is compact.
\end{Lem}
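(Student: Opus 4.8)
The plan is to split the statement into three parts---equality of domains, equality of the two scales of spaces, and compactness of $f(L)-f(L_{0})$---and treat them in turn.

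First I would establish $D(L)=D(L_{0})$. By Lemma \ref{Lem_of_boundedness_of_E} (ii), after fixing $R$ large the perturbation $E$ is symmetric and $L_{0}$-bounded with relative bound strictly less than $1$, so the Kato--Rellich theorem yields that $L=L_{0}+E$ is self-adjoint on $D(L_{0})$; in particular $D(L)=D(L_{0})=\mathcal{H}_{2}$. The same relative bound gives $\|L\psi\|\leq (1+a)\|L_{0}\psi\|+b\|\psi\|$, and, writing $L_{0}=L-E$ and noting that $E$ is equally $L$-bounded (since $(L_{0}+i)(L+i)^{-1}$ is bounded), the reverse estimate. Hence the graph norms of $L$ and $L_{0}$ are equivalent; since $\|(1+|L|)\psi\|$ is equivalent to the graph norm of $L$, and similarly for $L_{0}$, the two $\mathcal{H}_{2}$-norms are equivalent.

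To obtain the full scale, I would interpolate and dualize. Because $1+|L|$ and $1+|L_{0}|$ are nonnegative self-adjoint operators sharing the domain $\mathcal{H}_{2}$ with equivalent graph norms, complex interpolation between $\mathcal{H}_{0}=\mathcal{H}$ and $\mathcal{H}_{2}$ identifies $D((1+|L|)^{s/2})$ with $D((1+|L_{0}|)^{s/2})$, with equivalent norms, for $s\in[0,2]$; the case $s<0$ then follows by duality. For the compactness of $f(L)-f(L_{0})$ I would run a Stone--Weierstrass argument, which explains the name of the lemma. Let $\mathcal{A}$ be the set of $f\in C_{\infty}(\mathbb{R})$ (continuous functions vanishing at infinity) for which $f(L)-f(L_{0})$ is compact. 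If $f_{n}\to f$ uniformly then $f_{n}(L)\to f(L)$ and $f_{n}(L_{0})\to f(L_{0})$ in operator norm, so $\mathcal{A}$ is closed; the identity $fg(L)-fg(L_{0})=f(L)[g(L)-g(L_{0})]+[f(L)-f(L_{0})]g(L_{0})$ shows $\mathcal{A}$ is a subalgebra, and taking adjoints shows it is stable under complex conjugation. By Lemma \ref{Lem_of_boundedness_of_E} (iii) the resolvents $r_{z}(x)=(x-z)^{-1}$ with $\mathrm{Im}\,z\neq 0$ lie in $\mathcal{A}$; these separate points and vanish nowhere, so the Stone--Weierstrass theorem for $C_{\infty}(\mathbb{R})$ forces $\mathcal{A}=C_{\infty}(\mathbb{R})$. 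In particular $C_{0}^{\infty}\subset\mathcal{A}$, which is the claim.

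The domain identity and the norm equivalence are routine. The step I expect to require the most care is the interpolation/duality argument certifying that the scales coincide for all $s$: one must check that the complex interpolation functor really produces $D(T^{s/2})$ for $T=1+|L|$ and respects the norm equivalence uniformly in $s$. The compactness is clean once Lemma \ref{Lem_of_boundedness_of_E} (iii) is in hand, the only genuine point being that the resolvents generate all of $C_{\infty}(\mathbb{R})$; alternatively one could bypass Stone--Weierstrass by inserting the resolvent difference into a Helffer--Sjöstrand almost-analytic functional calculus and using that a norm-convergent integral of compact operators is compact.
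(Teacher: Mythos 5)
Your proposal is correct and follows essentially the same route as the paper: the paper's proof is a two-line remark deriving the domain identity from the relative boundedness in Lemma \ref{Lem_of_boundedness_of_E} (ii) and the compactness of $f(L)-f(L_{0})$ from the compact resolvent difference in Lemma \ref{Lem_of_boundedness_of_E} (iii) together with a Stone--Weierstrass argument. You have simply filled in the standard details (Kato--Rellich, interpolation and duality for the scale, the subalgebra argument), all of which are sound.
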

\begin{proof}[Proof of Lemma \ref{Lem_Stone_Weierstrass}]
The first statement is obtained by the relative boundedness, and the second follows from (iii) of Lemma \ref{Lem_of_boundedness_of_E}  and a Stone-Weierstrass argument.
\end{proof}

\begin{Lem}
$\sigma_{ess}(L) = [0, \infty)$.
\end{Lem}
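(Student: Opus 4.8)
The plan is to reduce the computation to the free operator $L_0$ and then to analyse $L_0$ directly on the end. By part (iii) of Lemma \ref{Lem_of_boundedness_of_E} the resolvent difference $(L_0-z)^{-1}-(L-z)^{-1}$ is compact for $\mathrm{Im}\,z\neq 0$, so Weyl's theorem on the invariance of the essential spectrum under relatively compact perturbations gives $\sigma_{\text{ess}}(L)=\sigma_{\text{ess}}(L_0)$, and it remains to show $\sigma_{\text{ess}}(L_0)=[0,\infty)$. A preliminary observation I would record is that $k(r)\to 0$ as $r\to\infty$: integrating the lower bound $-k'\geq c_0 r^{-1}k$ from \eqref{Conditions_for_k} yields $k(r)\leq k(r_0)(r/r_0)^{-c_0}$. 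This decay is what drives everything.

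For the containment $\sigma_{\text{ess}}(L_0)\subseteq[0,\infty)$ I would use positivity on the end. For $u\in C_0^\infty(M_\infty)$ one has $\langle L_0 u,u\rangle=\|D_r u\|^2+\int k(r)\langle Pu,u\rangle_{L^2(N)}\,dr\geq 0$, since $k>0$ and $P\geq 0$. A Persson-type characterization of $\inf\sigma_{\text{ess}}(L_0)$ as the limit, over an exhaustion by compact sets $K$, of the infimum of $\langle L_0 u,u\rangle/\|u\|^2$ taken over $u$ supported in $M\setminus K$ (hence eventually inside $M_\infty$) then forces $\inf\sigma_{\text{ess}}(L_0)\geq 0$. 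This captures the fact that the essential spectrum is governed only by the behaviour at infinity, so the precompact piece $M_C$ plays no role.

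For the reverse containment $[0,\infty)\subseteq\sigma_{\text{ess}}(L_0)$ I would build singular Weyl sequences concentrated at $r\to\infty$. Let $\phi_0\in L^2(N,\mu)$ be a normalized eigenfunction of $P$ with lowest eigenvalue $\lambda_0\geq 0$, fix $\lambda\geq 0$, and set $\zeta_m(r)=\zeta\bigl((r-R_m)/\ell_m\bigr)$ and $u_m=e^{i\sqrt{\lambda}\,r}\zeta_m\phi_0$ with $\zeta\in C_0^\infty(\mathbb{R})$. Since $P\phi_0=\lambda_0\phi_0$ and the radial factor separates, $(L_0-\lambda)u_m=-(2i\sqrt{\lambda}\,\zeta_m'+\zeta_m'')e^{i\sqrt{\lambda}\,r}\phi_0+\lambda_0 k(r)u_m$. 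The first term has relative size $O(\ell_m^{-1})$, while the second is bounded by $\lambda_0(\sup_{\mathrm{supp}\,u_m}k)\|u_m\|$. Choosing $R_m\to\infty$ with pairwise disjoint supports, $\ell_m\to\infty$, and $\ell_m=o(R_m)$ makes the kinetic error vanish and keeps $k$ small on the support, so $\|(L_0-\lambda)u_m\|/\|u_m\|\to 0$; the $u_m$ are orthonormal, hence form a singular sequence, and every $\lambda\geq 0$ lies in $\sigma_{\text{ess}}(L_0)$. Combined with the previous paragraph this gives $\sigma_{\text{ess}}(L_0)=[0,\infty)$, and the bottom value $\lambda=0$ is included automatically (the kinetic error is then even smaller, $O(\ell_m^{-2})$).

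I expect the only genuinely delicate point to be the calibration of the two competing scales in the Weyl sequence: the packet width $\ell_m$ must grow fast enough to kill the kinetic error $O(\ell_m^{-1})$, yet remain $o(R_m)$ so that the power-law decay of $k$ guaranteed by \eqref{Conditions_for_k} makes the potential term $\lambda_0 k(r)$ negligible on $\mathrm{supp}\,u_m$; one must check these requirements are compatible with the given decay rate of $k$. The remaining ingredients — the Weyl reduction and the Persson lower bound — are standard, the latter requiring only that $L_0$ be a lower semibounded elliptic operator, which holds here.
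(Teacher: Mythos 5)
Your proof is correct and follows essentially the same route as the paper, which likewise establishes the lower bound via Persson's formula and the inclusion $[0,\infty)\subseteq\sigma_{\text{ess}}$ via a Weyl sequence argument (the paper's proof is a one-line appeal to exactly these two ingredients). The only cosmetic difference is that you first pass from $L$ to $L_{0}$ using the compact resolvent difference of Lemma \ref{Lem_of_boundedness_of_E} (iii), whereas the paper states Persson's formula for $L$ directly; your explicit Weyl sequences and the calibration $\ell_m\to\infty$, $\ell_m=o(R_m)$ against the power-law decay of $k$ forced by \eqref{Conditions_for_k} are sound.
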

\begin{proof}
The Persson's formula (see, for exapmle, \cite{CFKS86})
\begin{gather*}
\inf \sigma_{\text{ess}}(L) = \sup_{ K \Subset M } \inf_{ \phi \in C_{0}^{\infty}(M \setminus K), \| \phi \| = 1 } \langle \phi,  L \phi \rangle
\end{gather*}
and a Weyl sequene argument give the desired result. 
\end{proof}

\begin{Lem}\label{Lem_boundedness_of_free_commutator}
Let $L_{0}$ be as in Theorem \ref{Thm_Mourre_Theory}. Then for large enough $R$,
\begin{enumerate}
\item $[L_{0}, iA]$ extends from $C_{0}^{\infty}$ to a bounded operator $\mathcal{H}_{+2} \to \mathcal{H}$,
\item $[[L_{0}, iA], iA]$ extends from $C_{0}^{\infty}$ to a bounded operator $\mathcal{H}_{+2} \to \mathcal{H}_{-2}$.
\end{enumerate}
\end{Lem}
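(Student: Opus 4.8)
The plan is to compute $[L_{0},iA]$ and the iterated commutator $[[L_{0},iA],iA]$ explicitly on $C_{0}^{\infty}$, to recognise each as a differential operator with \emph{bounded} coefficients, and then to read off the asserted mapping properties term by term from Lemma~\ref{Lem_of_boundedness_of differential_operators}. It is convenient first to rewrite
\begin{gather*}
A = a D_{r} - \tfrac{i}{2}a', \qquad a := \chi_{R}^{2}r,
\end{gather*}
using $[D_{r},a]=-ia'$. Because $\chi_{R}'=R^{-1}\chi'(\cdot/R)$ is supported in $r\sim R$, the function $a'=\chi_{R}^{2}+2r\chi_{R}\chi_{R}'$ is bounded, while $a''$ and $a'''$ are bounded, of size $O(R^{-1})$, and supported in $r\sim R$. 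I also record the two consequences of \eqref{Conditions_for_k} that will be used repeatedly: the functions $rk'/k$ and $r^{2}k''/k$ are bounded.

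For (i) a direct computation on $C_{0}^{\infty}$ gives
\begin{gather*}
[D_{r}^{2},iA]=2a'D_{r}^{2}-2ia''D_{r}-\tfrac12 a''', \qquad [kP,iA]=-\chi_{R}^{2}rk'\,P,
\end{gather*}
the second identity because $P$ commutes with $A$ and $[k,A]=iak'$. Thus $[L_{0},iA]$ is a sum of four terms, each of which is a \emph{bounded} function times one of the operators treated in Lemma~\ref{Lem_of_boundedness_of differential_operators} (after inserting an auxiliary cut-off of the same type on the pieces supported near $r\sim R$): indeed $2a'D_{r}^{2}$ is a bounded multiple of $\chi_{R}D_{r}^{2}$, $-2ia''D_{r}$ of $\chi_{R}D_{r}$, $-\tfrac12 a'''$ is itself bounded, and $-\chi_{R}^{2}rk'P=-(\chi_{R}\,rk'/k)\,\chi_{R}kP$ is a bounded multiple of $\chi_{R}kP$ since $rk'/k$ is bounded. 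Invoking items \ref{eq_chiRDr2}, \ref{eq_chiRDr} and \ref{eq_chiRkP} of that lemma (with $s=2$) and the continuous inclusion $\mathcal{H}_{1}\hookrightarrow\mathcal{H}$, each term maps $\mathcal{H}_{+2}\to\mathcal{H}$, which is (i).

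For (ii) one commutes each of the four terms above with $iA$ once more. The $D_{r}$-part $[[D_{r}^{2},iA],iA]$ is again a \emph{second-order} operator in $D_{r}$: the would-be third-order terms cancel (e.g. $[cD_{r}^{2},aD_{r}]=i(ac'-2ca')D_{r}^{2}-ca''D_{r}$), leaving coefficients built from $a$ and its derivatives, all bounded. The angular part is $[[kP,iA],iA]=-a(\chi_{R}^{2}rk')'\,P$, whose coefficient is dominated by $Ck$ because its worst contribution is $\chi_{R}^{4}r^{2}k''$ and $|r^{2}k''|\le Ck$ by \eqref{Conditions_for_k}; crucially no bound on $k'''$ is needed, since the angular commutator differentiates $k'$ only once more. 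Every resulting term is then a bounded multiple of $\chi_{R}D_{r}^{2}$, $\chi_{R}D_{r}$, $\chi_{R}kP$, or a bounded function, and maps $\mathcal{H}_{+2}$ into $\mathcal{H}$ or $\mathcal{H}_{1}$; composing with the inclusions $\mathcal{H}\hookrightarrow\mathcal{H}_{-1}\hookrightarrow\mathcal{H}_{-2}$ (and passing to adjoints together with the dual form of Lemma~\ref{Lem_of_boundedness_of differential_operators} for the few terms not literally of the listed shape) yields the bound $\mathcal{H}_{+2}\to\mathcal{H}_{-2}$, which is (ii).

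The main obstacle is bookkeeping rather than conceptual. First, the commutators are a priori defined only as quadratic forms on $C_{0}^{\infty}$, so I would justify the passage to genuine bounded operators by the density-and-closure argument already used in the proof of Lemma~\ref{Lem_of_boundedness_of differential_operators}, $C_{0}^{\infty}$ being a core for $(L_{0}+C)^{1/2}$. Second, and more importantly, one must check that every coefficient produced by the successive differentiations is a bounded function; this would fail for a generic positive $k$ and holds here exactly because \eqref{Conditions_for_k} forces $rk'/k$ and $r^{2}k''/k$ to be bounded, so that $\chi_{R}^{2}rk'$ and $a(\chi_{R}^{2}rk')'$ are dominated by $Ck$. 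The role of taking $R$ large is only to keep the $O(R^{-1})$ cut-off remainders under control; it is not otherwise needed for mere boundedness.
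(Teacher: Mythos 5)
Your argument is correct and takes essentially the same route as the paper: compute $[D_{r}^{2},iA]=2(\chi_{R}^{2}r)'D_{r}^{2}-2i(\chi_{R}^{2}r)''D_{r}-\tfrac12(\chi_{R}^{2}r)'''$ and $[kP,iA]=-\chi_{R}^{2}rk'P$ explicitly, observe that all coefficients are bounded (using $|rk'|\leq Ck$ and $|r^{2}k''|\leq Ck$ from \eqref{Conditions_for_k}), and invoke Lemma~\ref{Lem_of_boundedness_of differential_operators}. The only discrepancy is a harmless sign: $[[kP,iA],iA]=+\chi_{R}^{2}r(\chi_{R}^{2}rk')'P$ rather than its negative, which does not affect the boundedness argument.
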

\begin{proof}
To begin, we show that $[D_{r}^{2}, iA]$ is bounded from $\mathcal{H}_{+2}$ to $\mathcal{H}$.
A brief calculation shows
\begin{gather*}
[D_{r}^{2}, iA] = 2 (\chi_{R}^{2} r )^{\prime} D_{r}^{2} + \frac{2}{i}(\chi_{R}^{2} r)^{\prime \prime}D_{r} - \frac{1}{2}(\chi_{R}^{2} r) ^{\prime \prime \prime}.
\end{gather*}
The coefficients $(\chi_{R}^{2} r )^{\prime}, (\chi_{R}^{2} r)^{\prime \prime},$ and $ (\chi_{R}^{2} r) ^{\prime \prime \prime}$ are bounded.
By taking $R$ large enough, the boundedness of $[D_{r}^{2}, iA]$ from $\mathcal{H}_{+2}$ to $\mathcal{H}$ follows from that of $\chi_{R}D_{r}^{2}$ and $\chi_{R}D_{r}$, which is ensured by  Lemma \ref{Lem_of_boundedness_of differential_operators}.

Next we consider the term
\begin{gather*}
[kP, iA] = -\chi_{R}^{2} r k^{\prime} P.
\end{gather*}
By \eqref{Conditions_for_k}, $|r k^{\prime} | \leq C k$.
Using Lemma \ref{Lem_of_boundedness_of differential_operators}, it follows that $[kP, iA]$ is bounded from $\mathcal{H}_{+2}$ to $\mathcal{H}$. This completes the proof of (i).

The boundedness of the double commutator in (ii) is proven using similar arguments.
We can use Lemma \ref{Lem_of_boundedness_of differential_operators} to prove the boundedness of $[ [D_{r}^{2}, iA], iA]$ from $\mathcal{H}_{+2}$ to $\mathcal{H}_{-2}$.
Since
\begin{gather*}
[ [kP, iA], iA] = \chi_{R}^{2}r(\chi_{R}^{2}r k^{\prime})^{\prime} P,
\end{gather*}
we need the estimates \eqref{Conditions_for_k} on the second derivative of $k$ for $r$ large
\begin{gather*}
|r^{2} k^{\prime \prime}| \leq C k
\end{gather*}
to prove the boundedness of $[ [kP, iA], iA]$.
\end{proof}

Now we prove the Mourre estimate for unperturbated system $L_{0}$.

\begin{Lem}
Let $L_{0}$ be as in Theorem \ref{Thm_Mourre_Theory} and $A$ given by \eqref{Def_of_A}. Suppose $\lambda_{0} > 0$. Then for every $\epsilon > 0$ there exist an interval $\Lambda$ about $\lambda_{0}$ and a compact operator $K$ such that for $R$ large,
\begin{gather*}
E_{\Lambda}(L_{0}) [L_{0}, iA] E_{\Lambda}(L_{0}) \geq \min(2, c_{0})(\lambda_{0} - \epsilon )E_{\Lambda}(L_{0}) + K.
\end{gather*}
Here $E_{\Lambda}(L_{0})$ is the spectral projection for $L_{0}$ corresponding to $\Lambda$, and 
 $c_{0}$ is the constant which appears in \eqref{Conditions_for_k}.
\end{Lem}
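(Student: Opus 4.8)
The plan is to compute the quadratic form $[L_0,iA]$ explicitly, isolate the part that survives for large $r$, and show that it dominates $\min(2,c_0)L_0$ modulo terms supported in a compact subset of $M$; localizing with the spectral projection then converts those error terms into compact operators via Lemma~\ref{Lem_Rellich}.

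First I would record the commutator. Put $g=\chi_R^2 r$. From the computations underlying Lemma~\ref{Lem_boundedness_of_free_commutator}, after symmetrizing the first-order part of $[D_r^2,iA]$ one has, as a form identity on $C_0^\infty$,
\begin{gather*}
[L_0,iA] = 2\,D_r\,g'\,D_r - \tfrac12 g''' - \chi_R^2 r\,k'\,P .
\end{gather*}
By Lemma~\ref{Lem_boundedness_of_free_commutator} this extends to a bounded form $\mathcal H_{2}\to\mathcal H$, so it suffices to argue on $C_0^\infty$. Now $g'=\chi_R^2+2r\chi_R\chi_R'$ equals $1$ and $g'''=0$ for $r\ge R$, while $\chi_R'$ (hence $g'-\chi_R^2$ and $g'''$) is supported in the transition region $T=\{R/2\le r\le R\}\times N$, a \emph{compact} subset of $M$ since $N$ is compact.

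Next I would exploit the hypotheses on $k$. Condition \eqref{Conditions_for_k} gives $-rk'\ge c_0 k$, and since $P\ge0$ this yields $-\chi_R^2 r k' P\ge c_0\,\chi_R(kP)\chi_R$. Replacing $g'$ by $\chi_R^2$ in the leading term and collecting the differences into an error $R_1$ supported in $T$ (moving $\chi_R$ across $D_r^2$ costs only further $T$-supported first-order terms), I obtain the form inequality
\begin{gather*}
[L_0,iA]\ \ge\ \chi_R\bigl(2D_r^2 + c_0 kP\bigr)\chi_R + R_1\ \ge\ \min(2,c_0)\,\chi_R L_0\chi_R + R_1 ,
\end{gather*}
where the last step uses $2D_r^2+c_0kP\ge\min(2,c_0)(D_r^2+kP)=\min(2,c_0)L_0$ together with monotonicity under conjugation by $\chi_R$. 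Finally I would localize with $E_\Lambda:=E_\Lambda(L_0)$ for $\Lambda=(\lambda_0-\epsilon,\lambda_0+\epsilon)$. Writing $\chi_R L_0\chi_R = L_0 - (1-\chi_R^2)L_0 + R_2$ with $R_2$ supported in $T$, and using $E_\Lambda L_0 E_\Lambda\ge(\lambda_0-\epsilon)E_\Lambda$, I arrive at
\begin{gather*}
E_\Lambda[L_0,iA]E_\Lambda\ \ge\ \min(2,c_0)(\lambda_0-\epsilon)E_\Lambda + K,
\end{gather*}
with $K=E_\Lambda\bigl(R_1+\min(2,c_0)R_2-\min(2,c_0)(1-\chi_R^2)L_0\bigr)E_\Lambda$.

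The main point, and the only real obstacle, is the compactness of $K$. Each constituent is a cutoff supported in the compact set $T$ (or $1-\chi_R^2$, supported in $\{r<R\}$, which is precompact) multiplied by a differential operator of order $\le2$; expanding $L_0=D_r^2+kP$ makes every such differential operator one of $1,D_r,D_r^2,kP$, all with smooth coefficients. Inserting $E_\Lambda=f(L_0)E_\Lambda$ for some $f\in C_0^\infty$ with $f\equiv1$ on $\Lambda$ then turns each term into a finite sum of operators of the form (compactly supported cutoff)$\cdot D\cdot f(L_0)$, which is compact by Lemma~\ref{Lem_Rellich}. Hence $K$ is compact for any fixed $R$, and the asserted Mourre estimate for $L_0$ follows.
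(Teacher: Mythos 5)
Your proof is correct and follows essentially the same route as the paper: the same explicit computation of $[L_0,iA]$, the use of \eqref{Conditions_for_k} to get $-\chi_R^2 rk'P\ge c_0\chi_R^2 kP$, reduction to $\min(2,c_0)\chi_R L_0\chi_R$, and localization by $f(L_0)$ with compactness supplied by Lemma~\ref{Lem_Rellich}. The only (harmless) difference is bookkeeping: the paper absorbs the transition-region terms, which are $O(R^{-2})$ in norm, into the $\epsilon$ by taking $R$ large, whereas you place them in the compact remainder $K$.
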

\begin{proof}
Choosing $R$ large, we have
\begin{gather*}
[D_{r}^{2}, iA] = 2D_{r} (\chi_{R}^{2} r)^{\prime} D_{r} - \frac{1}{2}(\chi_{R}^{2} r)^{\prime \prime \prime}\\
\geq 2 D_{r} \chi_{R}^{2} D_{r} - \frac{\epsilon}{4} \min\{ 2, c_{0} \} \\
\geq 2 \chi_{R} D_{r}^{2} \chi_{R}  - \frac{\epsilon}{2} \min\{ 2, c_{0} \}.
\end{gather*}
Also, 
\begin{gather*}
[kP, iA] = - \chi_{R}^{2} r k^{\prime} P \geq c_{0} \chi_{R}^{2} k P.
\end{gather*}
Combining these two inequalities, we obtain
\begin{align*}
[L_{0}, iA]
&\geq \chi_{R} (2D_{r}^{2} + c_{0}kP) \chi_{R} - \frac{\epsilon}{2} \min\{ 2, c_{0} \} \\
&\geq \min\{ 2, c_{0} \} (\chi_{R} L_{0} \chi_{R} - \frac{\epsilon}{2}).
\end{align*}
We now multiply this estimate on both sides with $f(L_{0})$ where $f$ is a smooth compactly supported characteristic function of an interval about $\lambda_{0}$. This gives
\begin{align*}
f(L_{0}) [L_{0}, iA] f(L_{0}) \geq 
\min\{ 2, c_{0} \} ( f(L_{0}) \chi_{R} L_{0} \chi_{R}f(L_{0}) - \frac{\epsilon}{2} f^{2}(L_{0})) .
\end{align*}
Now 
\begin{align*}
&f(L_{0}) \chi_{R} L_{0} \chi_{R}f(L_{0})\\
=  &f(L_{0}) L_{0} (\chi_{R} - 1 )f(L_{0}) +  f(L_{0}) (\chi_{R} - 1) L_{0} \chi_{R}f(L_{0})
+  f(L_{0}) L_{0} f(L_{0}).
\end{align*}
$f \in C_{0}^{\infty}$ implies $f(L_{0}) L_{0}$ is bounded.
It is not difficult to see that $L_{0} \chi_{R}f(L_{0})$ is bounded using Lemma \ref{Lem_of_boundedness_of differential_operators}.
Since $\chi_{R} - 1$ has compact support, $(\chi_{R} - 1 )f(L_{0})$ is compact by Lemma \ref{Lem_Rellich}.
Thus, if the support of $f$ is within $\frac{\epsilon}{2}$ of $\lambda_{0}$, we have
\begin{gather*}
f(L_{0}) \chi_{R} L_{0} \chi_{R}f(L_{0}) \geq (\lambda_{0} - \frac{\epsilon}{2} ) f^{2}(L_{0}) + K.
\end{gather*}
where $K$ is a compact operator.
Therefore we have
\begin{gather}
f(L_{0}) [L_{0}, iA] f(L_{0}) \geq 
\min \{ 2, c_{0} \} (\lambda_{0} - \epsilon ) f^{2}(L_{0}) + K. \label{smooth_free_Mourre_estimate}
\end{gather}
Taking $f = 1$ in a neighbourhood of $\lambda_{0}$ and multiplying from both sides with $E_{\Lambda}(L_{0})$, with $\Lambda$ small enough to ensure  $E_{\Lambda}(L_{0})f(L_{0}) =  E_{\Lambda}(L_{0})$, this inequality gives the desired Mourre estimate.

\end{proof}

\begin{Lem}\label{Lem_of_boundedness_of_commutators_of_E}
Under the hypotheses of Theorem \ref{Thm_Mourre_Theory},
\begin{enumerate}
\item $[E, iA]: \mathcal{H}_{+2} \to \mathcal{H}_{0}$ is bounded,
\item $f(L)[E, iA]f(L)$: is compact for $f \in C_{0}^{\infty}$,
\item $[[E, iA], iA]: \mathcal{H}_{+2} \to \mathcal{H}_{-2}$ is bounded.
\end{enumerate}
\end{Lem}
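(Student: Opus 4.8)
The plan is to reduce all three statements, via a single structural observation, to Lemma \ref{Lem_of_boundedness_of_E}(i) and Lemma \ref{Lem_Rellich}. The observation is that $[E,iA]$ is again an operator of exactly the structural type of $E$: a second-order differential operator supported in $M_{\infty}$ whose coefficients obey bounds of the form \eqref{Perturbation_Hypothesis} with the same decay exponents. Since $A=\frac{1}{2}(\chi_{R}^{2}rD_{r}+D_{r}r\chi_{R}^{2})$ involves only the radial variable, commuting it through the three building blocks of \eqref{Def_of_E} behaves as follows. (a) Against a scalar coefficient $c\in\{a_{j},b_{j},V\}$ it produces, up to sign, $\chi_{R}^{2}r\,\partial_{r}c$, and because $|r\partial_{r}(r^{-\nu_{c}})|\leq Cr^{-\nu_{c}}$ this stays in the same coefficient class. (b) Against $D_{r}$ it produces only the bounded functions $(\chi_{R}^{2}r)'$ and $(\chi_{R}^{2}r)''$. (c) Against $\sqrt{k}\tilde{D}_{\theta}$, since the angular operator $\tilde{D}_{\theta}$ commutes with $A$, it reduces to $[\sqrt{k},A]\tilde{D}_{\theta}$, and $[\sqrt{k},A]$ is multiplication by a function of size $O(\sqrt{k})$ thanks to $|rk'|\leq Ck$ from \eqref{Conditions_for_k}. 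Hence $[E,iA]$ is a finite sum of terms of the same form as the entries of \eqref{Def_of_E}, with coefficients obeying \eqref{Perturbation_Hypothesis}.

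Granting this, statement (i) is immediate: an operator of the type of $E$ maps $\mathcal{H}_{s}\to\mathcal{H}_{s-2}$ for $s\in[0,2]$ by the method of Lemma \ref{Lem_of_boundedness_of_E}(i) (complex interpolation together with Lemma \ref{Lem_of_boundedness_of differential_operators}); taking $s=2$ gives $[E,iA]\colon\mathcal{H}_{+2}\to\mathcal{H}_{0}$. For statement (iii) I would apply the structural observation once more: $[[E,iA],iA]$ is again of the type of $E$, the only additional input being control of the second radial derivative landing on $k$, i.e.\ the bound $|r^{2}k''|\leq Ck$ of \eqref{Conditions_for_k}, exactly as in the double-commutator computation of Lemma \ref{Lem_boundedness_of_free_commutator}. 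Thus $[[E,iA],iA]\colon\mathcal{H}_{+2}\to\mathcal{H}_{0}\hookrightarrow\mathcal{H}_{-2}$, which is more than the asserted boundedness.

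For statement (ii) I would exploit the decay $r^{-\nu}$ with $\nu>0$. Fix $\chi_{\rho}(r)=\chi(r/\rho)$ and split $[E,iA]=\chi_{\rho}[E,iA]+(1-\chi_{\rho})[E,iA]$. The tail $(1-\chi_{\rho})[E,iA]$ is again of the type of $E$ but with coefficients bounded by $C\rho^{-\nu}$ on its support $\{r\geq\rho/2\}$, so repeating the estimate of part (i) gives $\|(1-\chi_{\rho})[E,iA]\|_{\mathcal{H}_{2}\to\mathcal{H}_{0}}\leq C\rho^{-\nu}$. Since $f\in C_{0}^{\infty}$ makes $f(L)\colon\mathcal{H}\to\mathcal{H}_{2}$ bounded (the scales of $L$ and $L_{0}$ coincide by Lemma \ref{Lem_Stone_Weierstrass}), the operator $f(L)(1-\chi_{\rho})[E,iA]f(L)$ has norm $O(\rho^{-\nu})$. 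On the other hand $\chi_{\rho}[E,iA]$ is a differential operator with smooth, compactly supported coefficients, so $\chi_{\rho}[E,iA]f(L)$ is compact by Lemma \ref{Lem_Rellich}, and therefore so is $f(L)\chi_{\rho}[E,iA]f(L)$. Letting $\rho\to\infty$ and using that the compact operators form a norm-closed ideal yields compactness of $f(L)[E,iA]f(L)$.

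The main obstacle is the bookkeeping behind the structural claim of the first paragraph: one must check term by term, in particular for the mixed entry $D_{r}a_{2}\sqrt{k}\tilde{D}_{\theta}$ and the angular entry $\sqrt{k}\tilde{D}_{\theta}a_{3}\sqrt{k}\tilde{D}_{\theta}$, that after commuting once and then twice with $A$ every resulting coefficient still lies in the class \eqref{Perturbation_Hypothesis} and that all radial derivatives hitting $k$ are absorbed by \eqref{Conditions_for_k}. Once this stability of the perturbation class under $\mathrm{ad}(A)$ is established, the three parts follow from Lemma \ref{Lem_of_boundedness_of_E}(i) and Lemma \ref{Lem_Rellich} with essentially no further analytic input.
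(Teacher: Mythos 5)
Your proposal follows essentially the same route as the paper: the paper's proof likewise rests on the observation that $[E,iA]$ (and then $[[E,iA],iA]$) retains the structural form \eqref{Def_of_E} with coefficients of size $O(r^{-\nu})$, obtained from the first (resp.\ second) derivative bounds in \eqref{Perturbation_Hypothesis} and \eqref{Conditions_for_k}, and then deduces (i) and (iii) from Lemma \ref{Lem_of_boundedness_of differential_operators} and (ii) from the $r^{-\nu}$ decay combined with Lemma \ref{Lem_Rellich}. The one blemish is in your part (ii): with the paper's convention that $\chi=1$ for $r\geq 1$ and $\chi=0$ for $r\leq \frac{1}{2}$, the operator $\chi_{\rho}[E,iA]$ is the \emph{tail}, supported in $\{r\geq \rho/2\}$ with coefficients $O(\rho^{-\nu})$, while $(1-\chi_{\rho})[E,iA]$ is the piece with compactly supported coefficients to which Lemma \ref{Lem_Rellich} applies — you have the two labels interchanged, but once they are swapped your cutoff-and-norm-limit argument is sound and in fact makes explicit the approximation that the paper leaves implicit when it asserts the compactness of $\chi_{R}r^{-\nu}(1,D_{r},\sqrt{k}\tilde{D}_{\theta})f(L)$.
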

\begin{proof}
It is easy to see that $[E, iA]$ has the following form:
\begin{gather}
[E, iA] = 
(1, D_{r}, \sqrt{k}\tilde{D}_{\theta})
\chi_{R}
\begin{pmatrix}
\tilde{V} &\tilde{b}_{1} & \tilde{b}_{2} \\
\tilde{b}_{1} & \tilde{a}_{1} 	& \tilde{a}_{2} &\\
^{t}\tilde{b}_{2}& ^{t}\tilde{a}_{2} 	& \tilde{a}_{3}	
\end{pmatrix}
\chi_{R}
\begin{pmatrix}
1\\
D_{r}	\\
\sqrt{k}\tilde{D}_{\theta}
\end{pmatrix} \label{[E,iA]}
\end{gather}
where
$\tilde{e} = \tilde{a}_{1}, \tilde{a}_{2}, \tilde{b}_{1}, \tilde{b}_{1}$ and $\tilde{V}$ satisfy 
\begin{gather}
|\tilde{e}(r, \theta)| \leq C r^{-\nu},\ \ \nu > 0.\label{estimates_on_e_tilde}
\end{gather}
Here we used the estimates \eqref{Perturbation_Hypothesis} on the first derivatives with respect to $r$ of coefficients in $E$ and the estimates \eqref{Conditions_for_k} on the first derivative of $k$.
By Lemma \ref{Lem_of_boundedness_of differential_operators}, \eqref{[E,iA]} and \eqref{estimates_on_e_tilde} imply $[E, iA]$ is bounded from $\mathcal{H}_{+2} \to \mathcal{H}_{0}$, which is (i). 

The boundedness of the double commutator in (iii) is proven using similar arguments. We need the estimates on second derivatives with respect to $r$ of coefficients in $E$ and $k$.

To prove (ii), note that $\chi_{R}(1, D_{r}, \sqrt{k}\tilde{D}_{\theta}) f(L)$ is bounded
and $\chi_{R}r^{-\nu}(1, D_{r}, \sqrt{k}\tilde{D}_{\theta}) f(L)$ is compact by Lemma \ref{Lem_Rellich}. Hence \eqref{[E,iA]} implies (ii).
\end{proof}

\begin{proof}[Proof of Theorem \ref{Thm_Mourre_Theory}]
We first show that $L$ and $A$ satisfy the conditions in Definition \ref{Conjugate_Operators}, that is, $A$ is a conjugate operator of $L$.
Since $C_{0}^{\infty} \subset D(A) \cap \mathcal{H}_{2}$ is a core for $L$ by hypothesis, condition (i) in Definition \ref{Conjugate_Operators} is satisfied.
Condition (ii) follows from (i) of Lemma \ref{Lem_boundedness_of_free_commutator} and (i) of Lemma \ref{Lem_of_boundedness_of_commutators_of_E}.
The first statement of (iii) follows from Lemma \ref{Lem_Stone_Weierstrass} and (i) of Lemma \ref{Lem_boundedness_of_free_commutator}. The second statement follows from the inclusion $C_{0}^{\infty} \subset D(A)\cap D(L_{0}A)$.
Condition (iv) follows from (ii) of Lemma \ref{Lem_boundedness_of_free_commutator} and (iii) of Lemma \ref{Lem_of_boundedness_of_commutators_of_E}. 
Let $X$ be a vector field on $M$ such that
\begin{gather*}
X = r \chi_{R}^{2}(r) \frac{\del}{\del r} \ \ \text{on} M_{\infty},
\end{gather*}
and $X = 0$ on $M_{C}$. Let $\{ \exp[tX] | t \in \mathbb{R} \} $ be the flow generated by X. The flow induces a one-parameter unitary group defined by
\begin{gather*}
U(t)\phi(x) = \Phi(t, x) \phi(\exp[-t X] x) 
\end{gather*}
for $\phi \in \mathcal{H}$, where $\Phi(t, x)$ is a weight function to make the dilation operator $U(t)$ unitary. By simple calculation, we find that
\begin{gather*}
A = \frac{1}{2}(\chi_{R}^{2}rD_{r} +D_{r}r\chi_{R}^{2})
\end{gather*}
is the generator of the dilation operator $U(t)$, that is, $U(t) = e^{-i t A}$. Now it is easy to see $e^{-i t A}$ leaves $D(L) = \mathcal{H}_{2}$ invariant and to show (v) as in the Euclidean case.

Now we show the Mourre estimate.
We replce $L_{0}$ with $L$ in \eqref{smooth_free_Mourre_estimate}.
By Lemma \ref{Lem_Stone_Weierstrass}, $f^{2}(L) - f^{2}(L_{0})$ and $f(L) - f(L_{0})$ are compact. By Lemma \ref{Lem_boundedness_of_free_commutator}, we can see that $[L_{0}, iA]f(L)$ and $f(L_{0})[L_{0}, iA]$ are bounded.
$f(L)[E, iA]f(L)$ is compact by (ii) of Lemma \ref{Lem_of_boundedness_of_commutators_of_E}. Using these facts, it is easily seen that replacing $L_{0}$ with $L$ in \eqref{smooth_free_Mourre_estimate} introduces a compact error, which can be handled in $K$. Making this replacement and multiplying the resulting equation from both sides with $E_{\Lambda}(L_{0})$, with $\Lambda$ small enough to ensure  $E_{\Lambda}(L_{0})f(L_{0}) =  E_{\Lambda}(L_{0})$, give the Mourre estimate
\begin{gather*}
E_{\Lambda}(L) [L, iA] E_{\Lambda}(L) \geq \min(2, c_{0})(\lambda_{0} - \epsilon )E_{\Lambda}(L) + K.
\end{gather*}
We have showed that $L$ satisfies a Mourre estimate at any point $\lambda_{0} > 0$ with conjugate operator $A$, which completes the proof of Theorem \ref{Thm_Mourre_Theory}.
\end{proof}

\section{Limiting Absorption Principle}\label{Limiting Absorption Principle}
In this section we show the limiting absorption principle, which leads to the Kato-smoothness of $G_{0}$ and $G_{1}$ in Theorem \ref{Thm_Kato_Smooth_Operators}. We extend the discussion in \cite{PSS81}.

We will prove

\begin{Thm}\label{Thm_Limiting_Absorption_Principle}
Let $L$ be as in Theorem \ref{Thm_Mourre_Theory}, $s > \frac{1}{2}$, and $\Lambda \Subset \mathbb{R}_{+} \setminus \sigma_{pp}(L)$. Then
\begin{gather*}
\sup_{z \in \Lambda_{\pm} = \Lambda \pm i \mathbb{R}_{+}} 
\| (|r| + 1)^{-s} (L -z)^{-1} (|r| + 1)^{-s} \| < \infty \\
\sup_{z \in \Lambda_{\pm} = \Lambda \pm i \mathbb{R}_{+}} 
\| (|r| + 1)^{-1-s} A(L -z)^{-1}A (|r| + 1)^{-1-s} \| < \infty.
\end{gather*}
\end{Thm}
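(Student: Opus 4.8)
The plan is to transfer the $A$-weighted resolvent estimate furnished by Theorem \ref{Thm_Mourre_Theory} into the two stated $\langle r \rangle$-weighted estimates, following the scheme of Section 8 of \cite{PSS81}. Throughout, fix $f \in C_0^{\infty}(\mathbb{R}_+ \setminus \sigma_{pp}(L))$ with $f \equiv 1$ on a neighbourhood of $\Lambda$. Since $z$ ranges over $\Lambda_\pm$ and $1-f$ vanishes near $\Lambda$, the operators $(1-f(L))(L-z)^{-1}$ and $(L-z)^{-1}(1-f(L))$ are uniformly bounded on $\Lambda_\pm$; hence it suffices to bound $\langle r \rangle^{-s} f(L)(L-z)^{-1} f(L)\langle r \rangle^{-s}$ and, for the second estimate, $\langle r \rangle^{-1-s} A f(L)(L-z)^{-1} f(L) A \langle r \rangle^{-1-s}$.

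For the first estimate I would factor
\[
\langle r \rangle^{-s} f(L)(L-z)^{-1} f(L)\langle r \rangle^{-s}
= B\,\big[(|A|+1)^{-s}(L-z)^{-1}(|A|+1)^{-s}\big]\,B^{*},
\]
where $B = \langle r \rangle^{-s} f(L)(|A|+1)^{s}$; the powers $(|A|+1)^{\pm s}$ cancel and reproduce the left-hand side exactly. The middle bracket is uniformly bounded on $\Lambda_\pm$ by Theorem \ref{Thm_Mourre_Theory}, so the whole problem reduces to the boundedness of the weight-exchange operator $B$ (and of $B^{*}$). This is the heart of the matter and is precisely the content of Lemma \ref{Lem_for_LAP2}.

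To prove $B$ bounded, recall that $A = \tfrac12(\chi_R^{2} r D_r + D_r r \chi_R^{2})$, so modulo bounded terms $A f(L)$ agrees with $r D_r f(L)$; by Lemma \ref{Lem_of_boundedness_of differential_operators} the operator $D_r f(L)$ is bounded, whence $\langle r \rangle^{-1} A f(L)$ is bounded, i.e.\ $B$ is bounded for $s=1$. As $B$ is trivially bounded for $s=0$, the case $s \in [0,1]$ follows by complex interpolation applied to the analytic family obtained from $(|A|+1)^{s}$; for larger $s$ one iterates the commutator $[A,f(L)]$, each factor of $A$ costing one power of $\langle r \rangle$ and one further application of Lemma \ref{Lem_of_boundedness_of differential_operators}. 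The delicate point, and the extension of \cite{PSS81} beyond the first-order situation, is exactly this control of the non-integer powers $(|A|+1)^{s}$ against $\langle r \rangle^{-s} f(L)$ through the commutator expansion.

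For the second estimate I would absorb each outer factor of $A$ into the extra power $\langle r \rangle^{-1}$: writing $A \langle r \rangle^{-1-s} = \langle r \rangle^{-s}\big(\langle r \rangle^{s} A \langle r \rangle^{-1-s}\big)$ and commuting $f(L)$ so that $\langle r \rangle^{s} A \langle r \rangle^{-1-s} f(L)$ agrees modulo bounded terms with $D_r f(L)$, bounded by Lemma \ref{Lem_of_boundedness_of differential_operators}, the problem collapses to the first estimate with two additional bounded factors; the commutators $[A,f(L)]$ and $[A,\langle r \rangle^{-1-s}]$ contribute only lower-order, uniformly controlled terms. I expect the main obstacle to be the fractional-power interpolation step of Lemma \ref{Lem_for_LAP2}, together with the bookkeeping needed to verify that, in the two-$A$ case governing the second display, each of the two factors of $A$ is genuinely compensated by one of the two extra powers of $\langle r \rangle^{-1}$.
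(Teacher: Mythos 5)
Your proposal is correct, and for the first estimate it is essentially the paper's own argument: where you localize with $f(L)$ and discard the $(1-f(L))$ pieces, the paper sandwiches with $(L+i)^{-1}$ and removes it at the end by the resolvent identity, and the boundedness of your weight-exchange operator $B$ is exactly Lemma \ref{Lem_for_LAP2} (i), proved there, as you propose, by complex interpolation from the endpoint bounds of Lemma \ref{Lem_for_LAP1}. For the second estimate you take a genuinely different route. The paper keeps both factors of $A$ attached to the resolvent and trades $\langle |A|\rangle^{s}A\sim\langle |A|\rangle^{1+s}$ against $\langle r\rangle^{-1-s}$, which is precisely why Lemma \ref{Lem_for_LAP2} (i) is pushed up to $\alpha=2$ (hence Lemma \ref{Lem_for_LAP1} (iv) on $\chi_R D_r^2 r^2\chi_R(L+i)^{-1}\langle r\rangle^{-2}$) and supplemented by the commutator bound (ii). You instead write $A\langle r\rangle^{-1-s}=\langle r\rangle^{-s}T+O(\langle r\rangle^{-1-s})$ with $T$ a first-order differential operator with bounded coefficients, and reduce to the already-proved first estimate; this avoids the $\alpha\in(1,2]$ interpolation and the double-$A$ commutator lemma altogether, which is a real simplification. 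The price, which your sketch underplays, sits in two places: the $(1-f(L))$ remainders are no longer trivially disposed of, since the outer factors $A\langle r\rangle^{-1-s}$ are unbounded, so you need $(1-f(L))(L-z)^{-1}$ uniformly bounded from $\mathcal{H}_{-1}$ to $\mathcal{H}_{1}$ (true by spectral calculus because $1-f$ vanishes near $\Lambda$); and to land the weights adjacent to the resolvent you must show that $\langle r\rangle^{s}f(L)\langle r\rangle^{-s}T$ is bounded, which requires a commutator expansion of $[f(L),\langle r\rangle^{-s}]$ gaining simultaneously a power of $\langle r\rangle^{-1}$ and one derivative. Both points are standard (Helffer--Sj\"ostrand or iterated resolvent expansions), but they, rather than the interpolation, are the actual content of what you call ``two additional bounded factors.''
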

The first estimate is called the limiting absorption principle.

As a preliminary we prove

\begin{Lem}\label{Lem_for_LAP1}
Let $L$ be as in Theorem \ref{Thm_Mourre_Theory}. Then
\begin{enumerate}
\item $[E, ir\chi_{R}](L+i)^{-1}$ is bounded,
\item $[[E, ir\chi_{R}], ir\chi_{R}](L+i)^{-1}$ is bounded,
\item $\chi_{R}D_{r}r\chi_{R}(L+i)^{-1}\langle r \rangle^{-1}$ is bounded,
\item $\chi_{R}D_{r}^{2}r^{2}\chi_{R}(L+i)^{-1}\langle r \rangle^{-2}$ is bounded.
\end{enumerate}
\end{Lem}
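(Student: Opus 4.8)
The four statements are all of the form "differential operator times $(L+i)^{-1}$ times a weight is bounded," and the natural strategy is to reduce everything to the mapping properties of $\chi_R D_r$, $\chi_R D_r^2$, and $\chi_R(kP+1)^{1/2}$ already established in Lemma \ref{Lem_of_boundedness_of differential_operators}, together with the fact that $(L+i)^{-1}$ maps $\mathcal{H}$ into $\mathcal{H}_2$ (equivalently $D(L)=D(L_0)$, by Lemma \ref{Lem_Stone_Weierstrass}). The plan is to treat (i)--(ii) first, then (iii)--(iv).

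\textbf{Parts (i) and (ii).} First I would compute the commutator $[E, ir\chi_R]$ explicitly from the quadratic form \eqref{Def_of_E} defining $E$. Since $r\chi_R$ is a function of $r$ alone, it commutes with $V$ and with $\sqrt{k}\tilde D_\theta$, and the only nonzero contributions come from commuting $r\chi_R$ against the $D_r$ factors. A brief calculation shows that $[E, ir\chi_R]$ is again a second-order operator of the same schematic shape $(1, D_r, \sqrt{k}\tilde D_\theta) M (1, D_r, \sqrt{k}\tilde D_\theta)^t$ as in \eqref{[E,iA]}, but with the new coefficient matrix localized by $\chi_R$ and with coefficients that are products of the original $a_j, b_j, V$ with bounded functions $(r\chi_R)'$ and possibly one extra $r$ from differentiating. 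The key point is that each resulting coefficient is still bounded (indeed decaying, using \eqref{Perturbation_Hypothesis}), exactly as in the proof of Lemma \ref{Lem_of_boundedness_of_commutators_of_E}. Boundedness of $[E, ir\chi_R](L+i)^{-1}$ then follows by the same term-by-term estimation used there: write each monomial as (bounded coefficient)$\cdot\chi_R D_r (L_0+C)^{-1/2}$ or $\chi_R(kP+1)^{1/2}(L_0+C)^{-1/2}$ etc., invoke Lemma \ref{Lem_of_boundedness_of differential_operators}, and use that $(L+i)^{-1}$ and $(L_0+C)^{-1/2}(L+i)^{-1/2}$ are bounded by Lemma \ref{Lem_Stone_Weierstrass}. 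Part (ii) is identical with the double commutator, which brings in second derivatives with respect to $r$ of the coefficients; these are still bounded (again decaying) by \eqref{Perturbation_Hypothesis}, so the same estimates close.

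\textbf{Parts (iii) and (iv).} Here I would commute the weight $\langle r\rangle^{-s}$ through $(L+i)^{-1}$. Write $\chi_R D_r r\chi_R (L+i)^{-1}\langle r\rangle^{-1} = \chi_R D_r r\chi_R \langle r\rangle^{-1}(L+i)^{-1} + \chi_R D_r r\chi_R (L+i)^{-1}[L, \langle r\rangle^{-1}](L+i)^{-1}$. In the first term $r\chi_R\langle r\rangle^{-1}$ is bounded, so it reduces to boundedness of $\chi_R D_r (L+i)^{-1}$, which is part \ref{eq_chiRDr} of Lemma \ref{Lem_of_boundedness_of differential_operators} combined with $D(L)=D(L_0)$. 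The commutator term requires computing $[L, \langle r\rangle^{-1}]$; since $\langle r\rangle^{-1}$ depends only on $r$, this commutator is a first-order operator in $D_r$ with coefficients decaying like $\langle r\rangle^{-2}$, again handled by Lemma \ref{Lem_of_boundedness_of differential_operators}. Part (iv) proceeds the same way but I would commute $\langle r\rangle^{-2}$ through twice, producing a $\chi_R D_r^2$ term (controlled by part \ref{eq_chiRDr2}) plus lower-order remainders.

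\textbf{Main obstacle.} The routine part is bookkeeping; the one place demanding care is verifying that every commutator with $r\chi_R$ or $\langle r\rangle^{-s}$ produces coefficients that decay fast enough so that, after pulling out a bounded scalar factor, what remains is exactly one of the model operators $\chi_R D_r$, $\chi_R D_r^2$, $\chi_R(kP+1)^{1/2}$ in Lemma \ref{Lem_of_boundedness_of differential_operators}, with the weight $\sqrt{k}$ in the $\tilde D_\theta$ slot correctly absorbed via $\sqrt{k}\tilde D_\theta(kP+1)^{-1/2}$ being bounded. In particular the extra factor of $r$ (or $r^2$) from $r\chi_R$ must be controlled against the decay of the $E$-coefficients in (i)--(ii), and against the $\langle r\rangle^{-1}$ (resp. $\langle r\rangle^{-2}$) weight in (iii)--(iv); I expect this matching of powers of $r$ to be the only genuinely delicate step, and it is precisely where the hypotheses \eqref{Perturbation_Hypothesis} and \eqref{Conditions_for_k} enter.
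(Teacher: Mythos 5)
Your parts (i) and (ii) match the paper's (very terse) argument: compute $[E, ir\chi_{R}]$ from \eqref{Def_of_E}, observe that only the $D_{r}$ slots contribute, check via \eqref{Perturbation_Hypothesis} and \eqref{Conditions_for_k} that the new coefficients are bounded, and estimate term by term with Lemma \ref{Lem_of_boundedness_of differential_operators}. The gap is in (iii)--(iv): you commute the \emph{decaying} weight $\langle r\rangle^{-1}$ leftward through the resolvent, which produces the term $\chi_{R}D_{r}r\chi_{R}(L+i)^{-1}[L,\langle r\rangle^{-1}](L+i)^{-1}$. You only discuss the rightmost factor, but the left factor $\chi_{R}D_{r}r\chi_{R}(L+i)^{-1}$ is not bounded: it is first order in $D_{r}$ with a coefficient growing like $r$ and nothing to absorb it --- its unboundedness is precisely what the weight $\langle r\rangle^{-1}$ in the statement is there to compensate. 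The commutator $[L,\langle r\rangle^{-1}](L+i)^{-1}$ supplies coefficients of size $O(\langle r\rangle^{-2})$, but one power of $\langle r\rangle^{-1}$ must be spent just to reconstitute the operator $T\langle r\rangle^{-1}$ you are trying to estimate, so you arrive at an identity of the form $T\langle r\rangle^{-1}=B+(T\langle r\rangle^{-1})W$ with $B,W$ bounded but $\|W\|$ not small. This is circular and cannot be closed by iterating the commutation (each step reproduces $T$ with one more power of decay on the right, never terminating).

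The correct move, and what the paper does, is to commute in the opposite direction: push the \emph{growing} factor $r\chi_{R}$ rightward past the resolvent,
\begin{gather*}
\chi_{R}D_{r}\,r\chi_{R}(L+i)^{-1}\langle r\rangle^{-1}
=\chi_{R}D_{r}(L+i)^{-1}\,r\chi_{R}\langle r\rangle^{-1}
+\chi_{R}D_{r}(L+i)^{-1}[L,r\chi_{R}](L+i)^{-1}\langle r\rangle^{-1},
\end{gather*}
so that both terms carry the \emph{bounded} prefactor $\chi_{R}D_{r}(L+i)^{-1}$ (part (i) of Lemma \ref{Lem_of_boundedness_of differential_operators} together with $D(L)=D(L_{0})$). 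The first term closes because $r\chi_{R}\langle r\rangle^{-1}$ is bounded, and the commutator
$[L,r\chi_{R}](L+i)^{-1}=\bigl(2i^{-1}(r\chi_{R})'D_{r}-(r\chi_{R})''\bigr)(L+i)^{-1}+[E,r\chi_{R}](L+i)^{-1}$
is bounded because $(r\chi_{R})'$ and $(r\chi_{R})''$ are bounded functions and the $E$-term is exactly part (i) of the present lemma. Part (iv) is handled the same way with $r^{2}\chi_{R}$ moved to the right; there $[D_{r}^{2},r^{2}\chi_{R}]$ has an $O(r)$ coefficient and consumes one remaining factor $\langle r\rangle^{-1}$, which is why the paper reduces it to (iii), while $[E,r^{2}\chi_{R}^{2}]$ is expanded as $2[E,r\chi_{R}]r\chi_{R}+[r\chi_{R},[E,r\chi_{R}]]$ and reduced to (i) and (ii). Your (i)--(ii) are fine; (iii)--(iv) need this reversal of the commutation to be a proof.
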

\begin{proof}


By \eqref{Conditions_for_k}, \eqref{Perturbation_Hypothesis} and
 Lemma \ref{Lem_of_boundedness_of differential_operators}, we can show (i) and (ii).

Now we compute (iii).
\begin{align*}
& \chi_{R}D_{r}r\chi_{R}(L+i)^{-1}\langle r \rangle^{-1}\\
=& \chi_{R}D_{r}(L+i)^{-1}r\chi_{R}\langle r \rangle^{-1} 
+ \chi_{R}D_{r}(L+i)^{-1} [L, r\chi_{R}] (L+i)^{-1} \langle r \rangle^{-1}
\end{align*}
The first term in the right hand side is bounded by (i) of Lemma \ref{Lem_of_boundedness_of differential_operators}. We have
\begin{align*}
[L, r\chi_{R}] &= [D_{r}^{2}, r\chi_{R}] + [E, r\chi_{R}] \\
&= 2i^{-1}(r\chi_{R})^{\prime}D_{r} - (r\chi_{R})^{\prime \prime} + [E, r\chi_{R}].
\end{align*}
Using  (i) of Lemma \ref{Lem_of_boundedness_of differential_operators}  and (i) of Lemma \ref{Lem_for_LAP1}, we obtain the boundedness of $[L, r\chi_{R}] (L+i)^{-1}$, which implies the boundedness of the second term.

Next we will show (iv). we begin with the equality
\begin{align*}
\chi_{R}D_{r}^{2}r^{2}\chi_{R}(L+i)^{-1}\langle r \rangle^{-2}
= \chi_{R}D_{r}^{2} (L+i)^{-1} r^{2}\chi_{R}\langle r \rangle^{-2}
+ \chi_{R}D_{r}^{2} (L+i)^{-1} [L, r^{2}\chi_{R}] (L+ i)^{-1} \langle r \rangle^{-2}.
\end{align*}
The first term in the right hand side is bounded by (ii) of Lemma \ref{Lem_of_boundedness_of differential_operators}. The second term can be decomposed into two terms according to the following:
\begin{align*}
[L, r^{2}\chi_{R}] &= [D_{r}^{2}, r^{2}\chi_{R}] + [E, r^{2}\chi_{R}].
\end{align*}
It is easy to see that the first one is bounded using (iii). Replacing $\chi_{R}$ by $\chi_{R}^{2}$, the second can be decomposed in the following way:
\begin{align*}
&[E, r^{2}\chi_{R}^{2}](L+i)^{-1}\langle r \rangle^{-1} \\
=& 2[E, r\chi_{R}]r\chi_{R}(L+i)^{-1} \langle r \rangle^{-1}+ [r\chi_{R}, [E, r\chi_{R}]](L+i)^{-1}\langle r \rangle^{-1}\\
=& 2[E, r\chi_{R}](L+i)^{-1} r\chi_{R}\langle r \rangle^{-1}
+ 2[E, r\chi_{R}](L+i)^{-1} [L, r\chi_{R}] (L+i)^{-1}\langle r \rangle^{-1}\\
+& [r\chi_{R}, [E, r\chi_{R}]](L+i)^{-1}\langle r \rangle^{-1},
\end{align*}
which is bounded using (i), (ii) and the boundedness of $[D_{r}^{2}, r\chi_{R}](L+i)^{-1}\langle r \rangle^{-1}$, which can be shown by the argument in (iii). This proves (iv).
\end{proof}

\begin{Lem}\label{Lem_for_LAP2}
Let L be as in Theorem \ref{Thm_Mourre_Theory}.
Then
\begin{enumerate}
\item $\langle | A |\rangle^{\alpha} (L+i)^{-1} \langle r \rangle^{-\alpha}$ is bounded for $0 \leq \alpha \leq 2$
\item $\langle | A |\rangle^{s} [ A, (L+i)^{-1}] \langle r \rangle^{-1-s}$ is bounded for $0 \leq s \leq 1$.
\end{enumerate}
\end{Lem}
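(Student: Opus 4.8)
The plan is to prove both parts by reducing everything to the bounded operators of Lemma \ref{Lem_for_LAP1}, together with a complex-interpolation argument that exploits the fact that $\langle|A|\rangle^{iy}$ and $\langle r\rangle^{iy}$ are unitary for real $y$, so that the vertical-line bounds in Stein's theorem reduce to the integer endpoints.

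For (i), I would treat the endpoints $\alpha=0$ and $\alpha=2$ and then interpolate the analytic family $\langle|A|\rangle^{z}(L+i)^{-1}\langle r\rangle^{-z}$ across the strip $0\le\operatorname{Re}z\le2$ (applying the three-lines lemma to matrix elements over a dense set). The case $\alpha=0$ is just the boundedness of $(L+i)^{-1}$. For $\alpha=2$, I write $\langle|A|\rangle^{2}=1+A^{2}$, so it suffices to bound $A^{2}(L+i)^{-1}\langle r\rangle^{-2}$. Using the definition \eqref{Def_of_A} of $A$ (so that $A=\chi_R^2 rD_r$ modulo bounded terms) and commuting the factors of $r$ to the \emph{right} of the factors of $D_r$, the operator $A^{2}$ becomes a leading term of the form $\chi_R^{\#}D_r^{2}r^{2}\chi_R^{\#}$, plus first-order terms of the form $\chi_R^{\#}D_r r\chi_R^{\#}$ and zeroth-order terms; the remainders arise from the commutators $[D_r,r]$, $[D_r^{2},r^{2}]$ and from derivatives of $\chi_R$ (smooth, compactly supported). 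After reinstating the weight, the leading term is exactly the bounded operator of Lemma \ref{Lem_for_LAP1}(iv); the first-order remainders are controlled by Lemma \ref{Lem_for_LAP1}(iii) together with $\langle r\rangle^{-2}=\langle r\rangle^{-1}\langle r\rangle^{-1}$; and the $\chi_R'$-supported and zeroth-order pieces are handled by Lemma \ref{Lem_of_boundedness_of differential_operators}.

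For (ii), I would start from $[A,(L+i)^{-1}]=(L+i)^{-1}[L,A](L+i)^{-1}$ and insert $\langle r\rangle^{-s}\langle r\rangle^{s}$ after the first resolvent:
\begin{align*}
\langle|A|\rangle^{s}[A,(L+i)^{-1}]\langle r\rangle^{-1-s}
&=\langle|A|\rangle^{s}(L+i)^{-1}[L,A](L+i)^{-1}\langle r\rangle^{-1-s}\\
&=\big(\langle|A|\rangle^{s}(L+i)^{-1}\langle r\rangle^{-s}\big)\big(\langle r\rangle^{s}[L,A](L+i)^{-1}\langle r\rangle^{-1-s}\big).
\end{align*}
The first factor is bounded by part (i) (with $\alpha=s\le1$), so the task reduces to the boundedness of $\langle r\rangle^{s}[L,A](L+i)^{-1}\langle r\rangle^{-1-s}$. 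The key observation is that $[L,A]$ has no spatial growth: by Lemma \ref{Lem_boundedness_of_free_commutator} and \eqref{Conditions_for_k}, the top-order part of $[D_r^{2},iA]$ is $2D_r^{2}$ with bounded coefficient, $[kP,iA]=-\chi_R^{2}rk'P$ decays like $k$, and $[E,iA]$ decays like $r^{-\nu}$ by \eqref{[E,iA]}. Since the combined weight supplies a net decay $\langle r\rangle^{-1}$, I commute $\langle r\rangle^{s}$ to the right through $[L,A](L+i)^{-1}$ and bound the resulting terms using that $D_r^{2}(L+i)^{-1}$, $kP(L+i)^{-1}$ and $E(L+i)^{-1}$ are all bounded (Lemma \ref{Lem_of_boundedness_of differential_operators}, Lemma \ref{Lem_of_boundedness_of_E}), each commutator remainder carrying an extra factor $\langle r\rangle^{-1}$. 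Again it suffices to check $s=0$ (immediate) and $s=1$ and interpolate.

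The main obstacle I anticipate is the bookkeeping in part (i): because $A$ carries its factors of $r$ to the left of $D_r$ while Lemma \ref{Lem_for_LAP1} is phrased with $D_r$ on the left, one must commute $r^{2}$ past $D_r^{2}$ and verify that every commutator remainder is genuinely of lower order and is absorbed by $\langle r\rangle^{-2}$, all while keeping the $\chi_R$-localizations consistent. Part (ii) is then comparatively routine once (i) is in hand; its only subtlety is that the non-decaying top-order piece $2D_r^{2}$ of $[L,A]$ must be tamed purely through the net weight $\langle r\rangle^{-1}$ rather than through decay of its coefficient.
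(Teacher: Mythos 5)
Your proposal is correct and follows essentially the same route as the paper: reduce by complex (Stein) interpolation to the integer endpoints $\alpha=0,2$ and $s=0,1$, handle $\alpha=2$ and $s=1$ via Lemma \ref{Lem_for_LAP1} (iii) and (iv), and handle $s=0$ via Lemma \ref{Lem_boundedness_of_free_commutator} and Lemma \ref{Lem_of_boundedness_of_commutators_of_E}. The paper states this in three lines; your write-up merely supplies the commutator bookkeeping that those citations leave implicit.
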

\begin{proof}[Proof of Lemma \ref{Lem_for_LAP2}]
By interpolation, it is enough to prove for $\alpha =0, 2$ and $s = 0, 1$.
The case $\alpha = 0$ is obvious. The case $\alpha =2$ and $s = 1$ follows from (iii) and (iv) of Lemma \ref{Lem_for_LAP1}.

The case $s = 0$ follows from Lemma \ref{Lem_boundedness_of_free_commutator} and Lemma \ref{Lem_of_boundedness_of_commutators_of_E}.
\end{proof}

\begin{proof}[Proof of Theorem \ref{Thm_Limiting_Absorption_Principle}]
Writing
\begin{gather*}
\langle r \rangle^{-s} (L+i)^{-1} (L-z)^{-1}(L+i)^{-1} \langle r \rangle^{-s}\\
= \langle r \rangle^{-s} (L+i)^{-1} \langle | A |\rangle^{-s} \cdot \langle | A |\rangle^{s} (L-z)^{-1} \langle | A |\rangle^{s} \cdot \langle | A |\rangle^{-s} (L+i)^{-1} \langle r \rangle^{-s}
\end{gather*}
and using Theorem \ref{Thm_Mourre_Theory} and Lemma \ref{Lem_for_LAP2},
we see that
\begin{gather*}
\langle r \rangle^{-s} (L+i)^{-1} (L-z)^{-1} (L+i)^{-1} \langle r \rangle^{-s}
\end{gather*}
is bounded.

Also, writing
\begin{gather*}
\langle r \rangle^{-1-s} A (L+i)^{-1} (L-z)^{-1}(L+i)^{-1} A \langle r \rangle^{-1-s}\\
= \langle r \rangle^{-1-s} A (L+i)^{-1} \langle | A |\rangle^{-s} \cdot \langle | A |\rangle^{s} (L-z)^{-1} \langle | A |\rangle^{s} \cdot \langle | A |\rangle^{-s} (L+i)^{-1} A \langle r \rangle^{-1-s}
\end{gather*}
and using Theorem \ref{Thm_Mourre_Theory} and Lemma \ref{Lem_for_LAP2},
we see that
\begin{gather*}
\langle r \rangle^{-1-s} A (L+i)^{-1} (L-z)^{-1} (L+i)^{-1} A \langle r \rangle^{-1-s}
\end{gather*}
is bounded.

Since
\begin{gather*}
(L-z)^{-1} = (L+i)^{-1} + (z+i)(L+i)^{-2} +(z+i)^{2} (L+i)^{-1} (L-z)^{-1} (L+i)^{-1},
\end{gather*}
we obtain the desired result.
\end{proof}

\section{Radiation Estimates}\label{Radiation Estimates}
In this section, we prove the radiation estimates. 
We want first to recall general definitions of Kato-smoothness and the commutator method which allow us to find new Kato-smooth operators $K$ given Kato-smooth operators $G$. For details, we refer the textbooks Yafaev \cite{Ya92} and \cite{Ya00}.

\begin{Def}\label{Def_of_Kato-smoothness}
An $H$-bounded operator $G$ is called $H$-smooth in the sense of Kato if
\begin{align*}
& \sup_{f \in D(H), \| f \| = 1} \int_{-\infty}^{\infty}\| G e^{-iHt}f\|^{2} dt \\
= &\sup_{z \in \mathbb{R}+i\mathbb{R}}\|G ((H - z)^{-1} - (H - \bar{z})^{-1} )G^{*} \| \\
< &\infty.
\end{align*}
An operator $G$ is called $H$-smooth on a Borel set $\Lambda$ if $GE(\Lambda)$ is $H$-smooth, which is equivalent to the condition
\begin{gather*}
\sup_{z \in \Lambda+i\mathbb{R}}\|G ((H - z)^{-1} - (H - \bar{z})^{-1} )G^{*} \| < \infty,
\end{gather*}
where $E(\Lambda)$ is the spectral projection of $H$ on $\Lambda$.
\end{Def}

\begin{Prop}\label{Copmmutator_method_to_find_new_Kato_smooth_operators}
Suppose that 
\begin{gather*}
G^{*}G \leq i[H, M] + K^{*}K,
\end{gather*}
where $M$ is a $H$-bounded operator and $K$ is $H$-smooth on a Borel set $\Lambda$. Then $G$ is also $H$-smooth on $\Lambda$.
\end{Prop}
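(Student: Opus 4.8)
The plan is to reduce to the integral characterization of Kato smoothness in Definition \ref{Def_of_Kato-smoothness}. Since $E(\Lambda)$ commutes with $H$ and hence with $e^{-iHt}$, proving that $G$ is $H$-smooth on $\Lambda$ amounts to establishing
\begin{gather*}
\sup_{\|f\|=1}\int_{-\infty}^{\infty}\|G E(\Lambda) e^{-iHt}f\|^{2}\,dt<\infty.
\end{gather*}
First I would fix $f\in D(H)$, set $g=E(\Lambda)f$ and $u(t)=e^{-iHt}g$, estimate the truncated integral $\int_{-T}^{T}\|Gu(t)\|^{2}\,dt$, and let $T\to\infty$ at the end.

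The key computation is to write $\|Gu(t)\|^{2}=(G^{*}Gu(t),u(t))$ and insert the form inequality, giving $\|Gu(t)\|^{2}\le (i[H,M]u(t),u(t))+\|Ku(t)\|^{2}$. Because $u$ solves $\dot u(t)=-iHu(t)$ and $H$ is self-adjoint, the commutator term is an exact derivative,
\begin{gather*}
(i[H,M]u(t),u(t))=\frac{d}{dt}(Mu(t),u(t)),
\end{gather*}
so integrating over $[-T,T]$ telescopes it into boundary contributions:
\begin{gather*}
\int_{-T}^{T}\|Gu(t)\|^{2}\,dt\le (Mu(T),u(T))-(Mu(-T),u(-T))+\int_{-T}^{T}\|Ku(t)\|^{2}\,dt.
\end{gather*}

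It then remains to bound the right-hand side uniformly in $T$ and $f$. For the boundary terms I would use that $M$ is $H$-bounded, that $e^{-iHt}$ is unitary and commutes with $H$, and that $HE(\Lambda)$ is bounded (as $\Lambda$ is bounded in our applications), so that $|(Mu(\pm T),u(\pm T))|\le \|Mu(\pm T)\|\,\|u(\pm T)\|\le C(\|Hg\|+\|g\|)\|g\|\le C'\|f\|^{2}$ independently of $T$. For the remaining integral, the identity $Ku(t)=KE(\Lambda)e^{-iHt}f$ together with the $H$-smoothness of $K$ on $\Lambda$ gives $\int_{-\infty}^{\infty}\|Ku(t)\|^{2}\,dt\le C_{K}^{2}\|f\|^{2}$. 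Letting $T\to\infty$ and taking the supremum over $\|f\|=1$ then yields the smoothness of $GE(\Lambda)$.

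I expect the main obstacle to be the rigorous justification of the two formal steps, namely applying the form inequality along the orbit $u(t)$ and differentiating $(Mu(t),u(t))$ term by term. To make these legitimate I would restrict $f$ to a core on which $u(t)\in D(H)$ for all $t$ and on which the pairings $(Mu,u)$ and $([H,M]u,u)$ are well defined and differentiable, verify the product rule $\frac{d}{dt}(Mu,u)=(M\dot u,u)+(Mu,\dot u)$ there, and afterwards pass to general $f$ by density once the uniform bound has been established.
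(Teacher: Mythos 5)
The paper does not actually prove this proposition—it defers to Proposition 1.19 of Yafaev's lecture notes—and your argument is precisely the standard Putnam--Kato commutator proof given there: apply the form inequality along the orbit $u(t)=e^{-iHt}E(\Lambda)f$, convert the commutator term into the boundary contributions $(Mu(\pm T),u(\pm T))$, control those by the $H$-boundedness of $M$, and absorb the remaining integral using the $H$-smoothness of $K$ on $\Lambda$. The one caveat, which you correctly flag yourself, is that bounding $\|Mu(\pm T)\|\leq C\|f\|$ uses that $HE(\Lambda)$ is bounded, so the argument as written establishes the claim only for bounded $\Lambda$; since every application in the paper takes $\Lambda\Subset\mathbb{R}\setminus\sigma_{\mathrm{pp}}(L)$ precompact, this is harmless.
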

For the proof of Proposition \ref{Copmmutator_method_to_find_new_Kato_smooth_operators}, see Proposition 1.19 in \cite{Ya00}.

Now we return to our problem.

\begin{Thm}\label{Thm_Radiation_Estimates}
Let $L$ be as in Theorem \ref{Thm_Mourre_Theory}. Then for large enough $R$,
\begin{gather*}
\chi_{R} r^{-\frac{1}{2}} (kP)^{\frac{1}{2}} 
\end{gather*}
is $L$-smooth on $\Lambda$ if $\Lambda \Subset \mathbb{R} \setminus \sigma_{\text{pp}}(L)$.
\end{Thm}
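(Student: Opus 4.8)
The plan is to apply Proposition \ref{Copmmutator_method_to_find_new_Kato_smooth_operators} with $H = L$, taking $G = \chi_{R}r^{-1/2}(kP)^{1/2}$ and choosing the conjugate operator $M$ to be (a multiple of) the dilation generator $A$ from \eqref{Def_of_A}. The idea is that differentiating $A$ along the flow of $L$ produces a positive contribution proportional to $\chi_R^2 k P$, which is exactly $G^*G$ up to the weight $r^{-1}$. So the core computation is to expand $i[L, M]$ and show that its non-negative part dominates $G^*G$, while all remaining terms are either controlled by $K^*K$ for some $K$ already known to be $L$-smooth on $\Lambda$ (the operators $G_0, G_1$ of Theorem \ref{Thm_Kato_Smooth_Operators}, whose $L$-smoothness follows from Theorem \ref{Thm_Limiting_Absorption_Principle}), or are lower-order error terms absorbable into the right-hand side.

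\textbf{Computing the commutator.} First I would compute $i[L_0, A]$ term by term. The radial part gives $i[D_r^2, A] = 2 D_r (\chi_R^2 r)' D_r + \text{(lower order)}$, and the transverse part gives $i[kP, A] = -\chi_R^2 r k' P$, which by \eqref{Conditions_for_k} satisfies $-\chi_R^2 r k' P \geq c_0 \chi_R^2 k P$. This is the crucial positivity: the transverse commutator alone yields a positive multiple of $\chi_R^2 k P$, which controls $G^*G = \chi_R r^{-1} k P \chi_R$ once the weight $r^{-1}$ is accounted for. I would then sandwich everything appropriately, noting that $G^* G = \chi_R r^{-1/2}(kP)^{1/2}\cdot(kP)^{1/2}r^{-1/2}\chi_R$ and comparing it against $\chi_R^2 k P$ with the extra decaying weight. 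The perturbation commutator $i[E, A]$, by Lemma \ref{Lem_of_boundedness_of_commutators_of_E}, has the same quadratic-form structure as $E$ itself with coefficients decaying one power faster in $r$ (by \eqref{estimates_on_e_tilde}); hence it can be bounded by $G_0^*G_0$ plus $G_1^*G_1$ type terms, i.e.\ absorbed into $K^*K$.

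\textbf{Handling the error terms.} The radial commutator $i[D_r^2, A]$ produces a positive term $2D_r \chi_R^2 D_r$ plus terms involving $(\chi_R^2 r)'$, $(\chi_R^2 r)''$ etc. The genuinely positive piece $D_r \chi_R^2 D_r$ is not needed for the lower bound on $G^*G$ (we only need the transverse part), so I would discard it or keep it as a bonus; the non-positive lower-order pieces carry cutoff derivatives $\chi_R'$ supported near $r \sim R$, hence are bounded with compact support and estimated by $G_0^*G_0 = \langle r\rangle^{-2s}$ for suitable $s$. The subtle point is matching the weights: $G^*G$ carries weight $r^{-1}$ while $-\chi_R^2 r k' P \geq c_0 \chi_R^2 kP$ carries no decaying weight, so the radiation-estimate positivity is in fact \emph{stronger} than what is needed and the comparison $G^*G \leq C\,i[L,A] + K^*K$ holds comfortably for $R$ large.

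\textbf{Main obstacle.} The principal difficulty is verifying that $M = A$ is $L$-bounded as required by Proposition \ref{Copmmutator_method_to_find_new_Kato_smooth_operators} and that the commutator identity $G^*G \le i[L,M] + K^*K$ holds rigorously as a form inequality on a suitable core, rather than merely formally; since $A$ is only relatively bounded after the cutoff $\chi_R$ and $G^*G$ involves the unbounded factor $kP$, I expect the careful bookkeeping of weights and the justification that every leftover term is either sign-definite or dominated by an already-established $L$-smooth operator ($G_0$ or $G_1$) to be where the real work lies. This mirrors the commutator argument of Yafaev \cite{Ya93}, and I would follow that template, using Lemma \ref{Lem_of_boundedness_of differential_operators} throughout to justify the boundedness of the various mixed terms between the scale of spaces.
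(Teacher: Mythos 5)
Your overall strategy is the right one (the commutator method of Proposition \ref{Copmmutator_method_to_find_new_Kato_smooth_operators}, with $G_0,G_1$ playing the role of the already-known smooth operator $K$), but your choice $M=A$ is the wrong conjugate operator, and the difficulty you flag at the end as the ``main obstacle'' is in fact fatal rather than a matter of bookkeeping. First, $A=\tfrac12(\chi_R^2 rD_r+D_r r\chi_R^2)$ is \emph{not} $L$-bounded: by Lemma \ref{Lem_for_LAP2} one only has boundedness of $\langle|A|\rangle(L+i)^{-1}\langle r\rangle^{-1}$, i.e.\ $A(L+i)^{-1}$ requires an extra decaying weight, so the hypothesis of Proposition \ref{Copmmutator_method_to_find_new_Kato_smooth_operators} fails outright. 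Second, and more seriously, the extra factor of $r$ in $A$ destroys the decay of the error terms: by Lemma \ref{Lem_of_boundedness_of_commutators_of_E} the coefficients of $[E,iA]$ are only $O(r^{-\nu})$ (the same decay as $E$ itself, because the $r$ in $A$ cancels the gain from differentiating). Thus, e.g., $|([D_ra_1D_r,iA]u,u)|\le C\|\chi_R r^{-\nu/2}D_ru\|^2$ and $|([V,iA]u,u)|\le C\|r^{-\nu/2}u\|^2$, which can only be dominated by $G_0^*G_0+G_1^*G_1$ with $s\le\nu/2$; since $L$-smoothness of $G_0,G_1$ requires $s>\tfrac12$, your argument needs $\nu>1$, whereas the theorem assumes only $\nu>0$.

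The paper avoids both problems by taking $M=\tfrac12(\chi_RD_r+D_r\chi_R)$, i.e.\ the radial momentum \emph{without} the weight $r$. This $M$ is a first-order operator with bounded coefficients, hence $L$-bounded by Lemma \ref{Lem_of_boundedness_of differential_operators}; the commutator $[E,iM]$ now has coefficients $O(r^{-\nu-1})$ (one full extra power of decay), so each error term is controlled by $G_0^*G_0+G_1^*G_1$ with $s=\tfrac12(1+\nu)>\tfrac12$, or, for the $a_3$ term, by $\|\chi_Rr^{-\nu}\|_\infty\,\|G_2u\|^2$ which is absorbed into the left side for $R$ large; and the key positivity $[kP,iM]=-\chi_Rk'P\ge c_0\chi_Rr^{-1}kP$ produces \emph{exactly} $c_0G_2^*G_2$ with its weight $r^{-1}$, rather than the unweighted $\chi_R^2kP$ your computation yields. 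The fix to your proposal is therefore not more careful bookkeeping but replacing $A$ by this weightless conjugate operator; the rest of your outline then goes through essentially as in the paper.
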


We prepare the following lemma.
\begin{Lem}\label{Lemma_for_Radiation_Estimates}
For every $\epsilon > 0$, there exist a constant $C > 0$ such that
\begin{gather}
(c_{0} - \epsilon ) G_{2}^{*}G_{2} \leq [L, iM] + C \sum_{j, k = 0, 1} G_{j}^{*} G_{j}\label{Estimete_for_Lemma_for_Radiation_Estimates}
\end{gather}
where 
\begin{gather*}
M = \frac{1}{2}(\chi_{R} D_{r} + D_{r} \chi_{R})\\
G_{0} = \langle r \rangle^{-s},\\
G_{1} = \chi_{R}\langle r \rangle^{-s}D_{r},\\ 
G_{2} = \chi_{R}\langle r \rangle^{-\frac{1}{2}}(kP)^\frac{1}{2}\\
s = \frac{1}{2}(1 + \nu) > \frac{1}{2}
\end{gather*}
and $c_{0}$ is the constant which appears in \eqref{Conditions_for_k}.
\end{Lem}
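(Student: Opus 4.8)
The plan is to expand $[L,iM]$ according to $L=D_r^2+kP+E$ and to show that the single term $[kP,iM]$ already produces the full positive quantity $c_0G_2^*G_2$, that $[D_r^2,iM]$ is harmless, and that $[E,iM]$ costs only a small multiple $-\epsilon G_2^*G_2$ plus the permitted remainder $C\sum_{j=0,1}G_j^*G_j$. Throughout I use that $(kP)^{1/2}$, being a function of $k(r)$ times $P^{1/2}$, commutes with every function of $r$, so that $G_2^*G_2=\chi_R^2\langle r\rangle^{-1}kP$.

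First I would compute the decisive commutator. Since $P$ commutes with $M$ and with all functions of $r$, a direct calculation gives
\begin{gather*}
[kP,iM]=-k'\chi_R P .
\end{gather*}
The lower bound $-k'\ge c_0r^{-1}k$ from \eqref{Conditions_for_k} and the positivity of $\chi_R$ and $P$ then yield the form inequality $[kP,iM]\ge c_0\chi_R r^{-1}kP$. For the radial part, the computation already carried out in Lemma \ref{Lem_boundedness_of_free_commutator} gives
\begin{gather*}
[D_r^2,iM]=2D_r\chi_R'D_r-\tfrac12\chi_R''' .
\end{gather*}
Assuming (as we may) that $\chi$ is nondecreasing, $\chi_R'\ge0$ so the first term is $\ge0$, while $\chi_R'''$ is, for fixed $R$, a bounded function supported in $\{R/2\le r\le R\}$ and is therefore dominated there by $C\langle r\rangle^{-2s}=CG_0^*G_0$. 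Hence $[D_r^2,iM]+[kP,iM]\ge c_0\chi_R r^{-1}kP-CG_0^*G_0$.

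The main work, and the main obstacle, is the estimate of $[E,iM]$. Writing $E$ in the matrix form \eqref{Def_of_E} and using $M\approx\chi_R D_r$, the commutator differentiates the $r$-dependent coefficients: each entry contributes a term in which some $a_j,b_j,V$ is replaced by its $\partial_r$-derivative (gaining a power $r^{-1}$ by \eqref{Perturbation_Hypothesis}) or in which $\sqrt k$ is differentiated (gaining $r^{-1}$ by \eqref{Conditions_for_k}). I would group the resulting terms by the number of angular factors $\sqrt k\tilde D_\theta$ they carry. The diagonal term from $a_3$ is of order $kP$ and is bounded in absolute value by $CR^{-\nu}\chi_R r^{-1}kP$; I would absorb it directly into the main term $c_0\chi_R r^{-1}kP$ (both carry a single factor $\chi_R$), which for $R$ large replaces $c_0$ by $c_0-\epsilon/2$. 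Only after this absorption do I pass from $\chi_R r^{-1}kP$ to $G_2^*G_2=\chi_R^2\langle r\rangle^{-1}kP$, using $\chi_R\ge\chi_R^2$ and $r^{-1}\ge\langle r\rangle^{-1}$ on $\operatorname{supp}\chi_R$. The off-diagonal terms from $a_2,b_2$ carry exactly one factor $\sqrt k\tilde D_\theta$; each such Hermitian cross term $Y+Y^*$ I would write as $Y=G_2^*X$ with $X$ bounded and estimate $Y+Y^*\le\epsilon G_2^*G_2+\epsilon^{-1}X^*X$. The point is that $X^*X$ is not merely bounded but decaying: its scalar weight is $O(r^{-2\nu_{b_2}-1})$ (respectively $O(r^{-2\nu_{a_2}-1})$, sandwiched by $D_r$), which is $\le C\langle r\rangle^{-2s}$ precisely because $2s=1+\nu$ and $\nu_{b_2},\nu_{a_2}\ge\nu$; hence these remainders land under $CG_0^*G_0$ and $CG_1^*G_1$. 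Finally the purely radial/scalar entries $a_1,b_1,V$ produce no angular factor and are dominated by $C(G_0^*G_0+G_1^*G_1)$ after the same power count, using Lemma \ref{Lem_of_boundedness_of differential_operators}.

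Assembling these bounds gives $[L,iM]\ge(c_0-\epsilon)G_2^*G_2-C\sum_{j=0,1}G_j^*G_j$, which is \eqref{Estimete_for_Lemma_for_Radiation_Estimates}. The two points I expect to require the most care are: (i) keeping track of the power of $\chi_R$ versus $\chi_R^2$, so that the $O(kP)$ error from $a_3$ is compared with $[kP,iM]$ before passing to $G_2^*G_2$, rather than against $G_2^*G_2$ directly; and (ii) checking, case by case, that the choice $s=\tfrac12(1+\nu)$ together with $\nu_\bullet\ge\nu$ forces every scalar remainder to decay like $\langle r\rangle^{-2s}$ (possibly sandwiched by $D_r$), which is exactly what pushes the errors into $G_0^*G_0$ and $G_1^*G_1$ instead of into a merely bounded operator.
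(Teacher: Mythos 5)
Your proposal is correct and follows essentially the same route as the paper: expand $[L,iM]$ into $[D_r^2,iM]+[kP,iM]+[E,iM]$, extract the positive term $c_0\chi_R r^{-1}kP$ from $[kP,iM]=-\chi_R k'P$ via \eqref{Conditions_for_k}, and estimate $[E,iM]$ term by term, absorbing the $a_3$ contribution (which carries the small factor $\|\chi_R r^{-\nu}\|_\infty=O(R^{-\nu})$) into $\epsilon G_2^*G_2$ for $R$ large while the remaining terms are dominated by $C(G_0^*G_0+G_1^*G_1)$ through the same power count $2s=1+\nu$. You are in fact slightly more careful than the paper, which omits the harmless $-\tfrac12\chi_R'''$ term in its formula for $[D_r^2,iM]$ and leaves the $\chi_R$ versus $\chi_R^2$ bookkeeping implicit.
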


\begin{proof}[Proof of Lemma \ref{Lemma_for_Radiation_Estimates}.\ \ ]
To calculate the commutator $[L, iM]$, we first remark that
\begin{gather}
[D_{r}^{2}, iM] = 2 D_{r}\chi_{R}^{\prime} D_{r}\label{Estimate_of_[D_{r}^{2}, iM]}\\
[k(r)P, iM] = - \chi_{R} k^{\prime} P \geq c_{0} \chi_{R} r^{-1} k P. \label{Estimate_of_[k(r)P, iM]}
\end{gather}
Here we used the inequality \eqref{Conditions_for_k}.

For the perturbation term $[E, iM]$. we can prove
\begin{gather}
|([E, iM]u, u)| \leq C \|G_{0} u\|^{2} + \|G_{1} u\| + C \|\chi_{R}r^{-\nu} \| \|G_{2}u\|^{2}.\label{Estimate_of_[E,iM]}
\end{gather}
It suffices to prove this estimate for each term of $E$ in the sum \eqref{Def_of_E}.
First we consider the terms involving $V$.
\begin{gather*}
[V, iM] = -\chi_{R} V^{\prime},\\
|([V, iM]u, u)| \leq C \|\chi_{R}r^{-\frac{1+\nu}{2}} u\|^{2} 
\leq C \|G_{0} u\|^{2}.
\end{gather*}
For the $a_{1}$ part, we have that
\begin{gather*}
[D_{r} a_{1} D_{r}, iM] = -D_{r}(\chi_{R}a_{1})^{\prime}D_{r},\\
|([D_{r} a_{1} D_{r}, iM]u, u)| \leq C \|G_{1}u\|^{2}.
\end{gather*}
For the $a_{3}$ part, we have that
\begin{gather*}
[\tilde{D}_{\theta} k a_{3} \tilde{D}_{\theta}, iM] = -\tilde{D}_{\theta} \chi_{R} (ka_{3})^{\prime} \tilde{D}_{\theta}\\
|([\tilde{D}_{\theta} k a_{3} \tilde{D}_{\theta}, iM] u, u)| 
\leq C \| \chi_{R} r^{- \nu}\| \cdot   \| \chi_{R} r^{-\frac{1}{2}}(k P)^{\frac{1}{2}} u \|^{2}
= C \| \chi_{R} r^{- \nu}\| \|G_{2} u\|^{2}.
\end{gather*}
Other terms can be handled in a similar way.

Combinig the inequalities \eqref{Estimate_of_[D_{r}^{2}, iM]}, \eqref{Estimate_of_[k(r)P, iM]} and \eqref{Estimate_of_[E,iM]}, we arrive at the estimate
\begin{gather*}
([L, iM]u, u) \geq c_{0} \| G_{2} u\|^{2}  - C \|G_{0}u\|^{2} - C \|G_{1}u\|^{2} - \| \chi_{R} r^{- \nu}\| \|G_{2} u\|^{2}\\
\geq (c_{0}- \epsilon ) \| G_{2} u\|^{2} - C \|G_{0}u\|^{2} - C \|G_{1}u\|^{2}
\end{gather*}
for an arbitrary $\epsilon > 0$ by taking $R > 0 $ large enough.
This gives the desired estimate \eqref{Estimete_for_Lemma_for_Radiation_Estimates}.
\end{proof}

\begin{proof}[Proof of Theorem \ref{Thm_Radiation_Estimates}]
Fix $\Lambda \Subset \mathbb{R} \setminus \sigma_{\text{pp}}(L)$ and consider \eqref{Estimete_for_Lemma_for_Radiation_Estimates}.
The operators $G_{0}$ and $G_{1}$ are $L$-smooth on $\Lambda$ by Theorem \ref{Thm_Limiting_Absorption_Principle} and $G_{2}$ is $L$-bounded.
The commutator method Proposition \ref{Copmmutator_method_to_find_new_Kato_smooth_operators} implies that the operator $G_{2}$ is also $L$-smooth on $\Lambda$.
\end{proof}

Theorem \ref{Thm_Limiting_Absorption_Principle} and Theorem \ref{Thm_Radiation_Estimates} directly mean Theorem\ref{Thm_Kato_Smooth_Operators}.

\section{One-space scattering}\label{Wave Operators}
We recall the smooth method of Kato which assures the existence of wave operators for perturbations that are smooth locally. For more details, see Corollary 4.5.7. in \cite{Ya92}.
\begin{Thm}\label{Kato's_smooth_method}
Suppose that $H$ and $H_{0}$ are self-adjoint operators  on Hilbert spaces $\mathcal{H}$ and $\mathcal{H}_{0}$ respectively, $J \in B(\mathcal{H}_{0}, \mathcal{H})$ is the identifier, and the pertuabation $HJ - JH_{0}$ admits a factorization
\begin{gather*}
HJ - JH_{0} = G^{*} G_{0},
\end{gather*}
where $G_{0}$ is $H_{0}$-bounded and $G$ is $H$-bounded.
Suppose $\{ \Lambda_{n} \}$ is a set of intervals which exhausts the core of the spectra of the operators $H_{0}$ and $H$ up to a set of Lebesgue measure zero. If on each of the intervals $\Lambda_{n}$ the operator $G_{0}$ is $H_{0}$-smooth and $G$ is $H$-smooth, then the wave operators $W^{\pm}(H, H_{0}; J)$ and $W^{\pm}(H_{0}, H; J^{*})$ exist.
\end{Thm}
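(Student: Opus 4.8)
This is essentially Corollary 4.5.7 of \cite{Ya92}, so the plan is to reproduce the smooth (Cook-type) argument adapted to the two-space setting. First I would reduce the claim to convergence on a dense set. Since $\{\Lambda_n\}$ exhausts the spectrum of $H_0$ up to a Lebesgue-null set, the subspace spanned by $\bigcup_n E_{H_0}(\Lambda_n) P_{ac}(H_0)\mathcal{H}_0$ is dense in $P_{ac}(H_0)\mathcal{H}_0$; absolute continuity guarantees that the discarded null set carries no spectral mass. It therefore suffices to prove that, for each fixed $n$ and each $f = E_{H_0}(\Lambda_n)P_{ac}(H_0)f$, the family $e^{itH}Je^{-itH_0}f$ has a strong limit as $t \to \pm\infty$, together with a uniform bound $\|e^{itH}Je^{-itH_0}P_{ac}(H_0)\| \le C$ so that the limit extends to all of $P_{ac}(H_0)\mathcal{H}_0$.

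Next I would apply Cook's method. For $f$ as above the function $t \mapsto e^{itH}Je^{-itH_0}f$ is differentiable with
\[
\frac{d}{dt}\,e^{itH}Je^{-itH_0}f = i\,e^{itH}(HJ - JH_0)e^{-itH_0}f = i\,e^{itH}G^*G_0 e^{-itH_0}f,
\]
using the factorization hypothesis. By the Cauchy criterion, existence of the limit follows once I show that $\bigl\|\int_{t_1}^{t_2} e^{itH}G^*G_0 e^{-itH_0}f\,dt\bigr\| \to 0$ as $t_1,t_2 \to +\infty$ (and likewise $\to -\infty$). To estimate this integral I would pair it with an arbitrary $g \in \mathcal{H}$, move $G^*$ onto $g$, and apply the Cauchy--Schwarz inequality in the time variable:
\[
\Bigl|\Bigl(\int_{t_1}^{t_2} e^{itH}G^*G_0 e^{-itH_0}f\,dt,\,g\Bigr)\Bigr|
\le \Bigl(\int_{t_1}^{t_2}\|G_0 e^{-itH_0}f\|^2\,dt\Bigr)^{1/2}
\Bigl(\int_{t_1}^{t_2}\|G e^{-itH}g\|^2\,dt\Bigr)^{1/2}.
\]

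The $H_0$-smoothness of $G_0$ on $\Lambda_n$ makes $\int_{-\infty}^{\infty}\|G_0 e^{-itH_0}f\|^2\,dt$ finite, so its tail --- the first factor --- tends to $0$; the $H$-smoothness of $G$ on $\Lambda_n$ is what controls the second factor. The symmetric statement for $W^\pm(H_0, H; J^*)$ then follows by interchanging the roles of $(H,G)$ and $(H_0,G_0)$ and replacing $J$ by $J^*$.

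The step I expect to be the main obstacle is precisely the two-space bookkeeping in this last estimate: the factor $\int_{t_1}^{t_2}\|Ge^{-itH}g\|^2\,dt$ must be bounded uniformly over $\|g\|=1$, whereas $G$ is only assumed $H$-smooth on each $\Lambda_n$ rather than globally. Handling this cleanly requires inserting the spectral projections $E_H(\Lambda_m)$ of $H$ and exploiting that the $\Lambda_n$ exhaust $\sigma(H)$ up to a null set, so that the portion of $e^{itH}Je^{-itH_0}f$ lying outside $\bigcup_m E_H(\Lambda_m)$ is negligible in the relevant sense. Once the uniform bound $\|e^{itH}Je^{-itH_0}P_{ac}(H_0)\| \le C$ is in hand, a standard $3\varepsilon$-argument upgrades convergence from the dense set to all of $P_{ac}(H_0)\mathcal{H}_0$, completing the proof.
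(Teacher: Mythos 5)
First, a point of comparison: the paper does not prove this statement at all --- it is recalled as Kato's smooth method and the proof is explicitly deferred to Corollary 4.5.7 of Yafaev's book --- so there is no internal argument to measure your sketch against, only the standard one you are trying to reproduce. Your skeleton is the right one: density of $\bigcup_n E_{H_0}(\Lambda_n)P_{ac}(H_0)\mathcal{H}_0$, Cook's method, Cauchy--Schwarz in the time variable, and the two smoothness hypotheses controlling the two factors. The uniform bound $\|e^{itH}Je^{-itH_0}\|\leq\|J\|$ is automatic from boundedness of $J$, so the reduction to a dense set is unproblematic.

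The genuine gap is precisely the step you defer to ``two-space bookkeeping,'' and it is not bookkeeping. With $G$ only $H$-smooth on $\Lambda_n$, your Cauchy--Schwarz estimate controls only the diagonal piece $E_{H}(\Lambda_n)e^{itH}Je^{-itH_0}E_{H_0}(\Lambda_n)f$ (and, by the intertwining relation, the pieces $E_H(\Lambda_m)(\cdot)$ with $\Lambda_m$ disjoint from $\Lambda_n$, whose limits vanish); for the complementary piece $(1-E_{H}(\Lambda_n))e^{itH}Je^{-itH_0}E_{H_0}(\Lambda_n)f$ the factor $\int\|Ge^{-itH}g\|^2\,dt$ is simply not finite, and rearranging the same inequality cannot fix this. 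What is actually needed --- and what Yafaev proves as a separate lemma before his Theorem 4.5.6 --- is that this complementary piece tends to zero in norm, equivalently that $\|(\varphi(H)J-J\varphi(H_0))e^{-itH_0}E_{H_0}(\Lambda_n)f\|\to 0$ for bounded Borel $\varphi$; this uses the factorization again through the resolvent identity $(H-z)^{-1}J-J(H_0-z)^{-1}=-(H-z)^{-1}G^{*}G_{0}(H_0-z)^{-1}$ together with the $H_0$-smoothness of $G_0$, not the exhaustion property of the $\Lambda_n$. Note also that $1-E_{H}\bigl(\bigcup_m\Lambda_m\bigr)$ need not be negligible: the excluded set has Lebesgue measure zero but can still carry point or singular continuous spectral measure of $H$, so ``the portion lying outside $\bigcup_m E_H(\Lambda_m)$ is negligible'' is a claim requiring proof. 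As written, your argument establishes the existence of $W^{\pm}(H,H_0;E_{H}(\Lambda_n)JE_{H_0}(\Lambda_n))$ but not of $W^{\pm}(H,H_0;J)$.
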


Now we apply Theorem \ref{Kato's_smooth_method} to our model.
\begin{proof}[Proof of Theorem\ref{Thm_wave_operator}]
First we note that any first order differential operator with compactly supported smooth coefficient function is $L_{0}$- and $L-$ locally smooth. This fact can be easily proved as in Section\ref{Limiting Absorption Principle}.

The perturbation term $E$ admits a factorization of the following form
\begin{gather*}
E = \sum_{l, m = 0, 1, 2}G_{l}^{*}B_{l,m}G_{m} + E_{C}
\end{gather*}
where $G_{l}$ are $L_{0}$-smooth on any $\Lambda \Subset \mathbb{R} \setminus \sigma_{\text{pp}}(L_{0})$ and $L$-smooth on any $\Lambda \Subset \mathbb{R} \setminus \sigma_{\text{pp}}(L)$ and $E_{C}$ is a second-order differential operator with compactly supported coefficient function. Then the smooth perturbation theory of Kato shows the existence of the wave operators $W^{\pm}(L, L_{0})$ and $W^{\pm}(L_{0}, L)$, which proves the Theorem.
\end{proof}

\section{Two-space scattering}\label{Two-space_scattering}
In this section, we consider a two-space scattering.

First we treat the short-range case.

\begin{Prop}\label{Prop_wave_operator_free_k_short}
Suppose that $k$ is short-range.
Then the wave operators 
$W^{\pm}(H_{k}, H_{0})$ and $W^{\pm}(H_{0}, H_{k})$ exist and are adjoint each other. They are asymptotically complete:
\begin{gather*}
W^{\pm}(H_{k}, H_{0}) \mathcal{H}_{f} = P_{ac}(H_{k}) \mathcal{H}.
\end{gather*}
\end{Prop}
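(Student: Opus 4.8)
The plan is to reduce the problem to a one-dimensional scattering problem in the $r$-variable and then verify the short-range condition. Since $P$ is a positive self-adjoint operator on $L^2(N,\mu)$ with discrete spectrum (as $N$ is compact), I would first decompose the Hilbert space $\mathcal{H}_f = L^2(M_f, H(\theta)\,dr\,d\theta)$ according to the spectral decomposition of $P$. Writing $P = \sum_j \lambda_j \Pi_j$ where $\Pi_j$ are the finite-rank eigenprojections and $\lambda_j \geq 0$ are the eigenvalues, both $H_0 = D_r^2$ and $H_k = D_r^2 + k(r)P$ commute with each $\Pi_j$ (since $P$ does and $k$ depends only on $r$). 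Hence the wave operators decompose as an orthogonal sum over $j$, and on each sector $\Pi_j \mathcal{H}_f$ the operators reduce to the one-dimensional Schr\"odinger operators $D_r^2$ and $D_r^2 + \lambda_j k(r)$ acting on $L^2(\mathbb{R})$ (tensored with the finite-dimensional eigenspace).

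Next I would invoke standard one-dimensional short-range scattering theory. On each sector the perturbation is the multiplication operator $\lambda_j k(r)$, and by the short-range hypothesis \eqref{short_range_k} we have $|k(r)| \leq C\langle r \rangle^{-\nu_k}$ with $\nu_k > 1$. This is exactly the classical short-range condition guaranteeing existence and completeness of the wave operators for a pair of one-dimensional Schr\"odinger operators. One clean way to package this uniformly in $j$ is to use Kato's smooth method (Theorem \ref{Kato's_smooth_method}): factor the perturbation as
\begin{gather*}
H_k - H_0 = k(r) P = \bigl(\langle r \rangle^{-\nu_k/2} (P+1)^{1/2}\bigr)^{*} \bigl(\langle r\rangle^{\nu_k/2} k(r)\bigr) \bigl(\langle r \rangle^{-\nu_k/2}(P+1)^{1/2}\bigr),
\end{gather*}
so that with $G_0 = G = \langle r \rangle^{-\nu_k/2}(P+1)^{1/2}$ and the bounded middle factor absorbed appropriately, the problem reduces to showing that $\langle r \rangle^{-s}(P+1)^{1/2}$, $s = \nu_k/2 > 1/2$, is both $H_0$-smooth and $H_k$-smooth on compact energy intervals away from thresholds.

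The required local smoothness follows from a limiting absorption principle for $H_0$ and $H_k$, which can be proved exactly as in Sections \ref{Application of Mourre Theory} and \ref{Limiting Absorption Principle} applied to the reference geometry $M_f = \mathbb{R} \times N$; indeed $H_0$ and $H_k$ fit the framework of Theorem \ref{Thm_Mourre_Theory} with the same conjugate operator built from $rD_r$, and the short-range decay of $k$ ensures the factor $\langle r\rangle^{\nu_k/2}k(r)$ is bounded. Since $H_0 = D_r^2$ has purely absolutely continuous spectrum $[0,\infty)$ with no embedded eigenvalues, and $H_k$ has empty singular continuous spectrum with non-accumulating eigenvalues by the Mourre estimate, the intervals $\Lambda_n$ exhausting $(0,\infty)$ minus the point spectrum suffice. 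Asymptotic completeness then follows by running the same argument with the roles of $H_0$ and $H_k$ interchanged, giving existence of $W^{\pm}(H_0, H_k)$, so that the wave operators are mutually adjoint and complete. The main obstacle I anticipate is the threshold behavior at energy zero together with the possible accumulation of the eigenvalues $\lambda_j k(r)$-thresholds: one must control the estimates uniformly in the sector index $j$, since $\lambda_j \to \infty$, and check that the union of threshold energies across all sectors does not obstruct the exhaustion of the spectrum by good intervals — but because each sector only shifts the effective potential by a bounded short-range term and the energy intervals are taken in $(0,\infty)$ away from the point spectrum, this uniformity can be arranged.
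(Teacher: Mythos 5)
Your core strategy -- diagonalize $P$ (discrete spectrum, $N$ compact), reduce to the one-dimensional pairs $D_r^2$ and $D_r^2+\lambda_j k(r)$ on each sector, and run Kato's smooth method off the limiting absorption principle -- is exactly the paper's proof; the paper phrases the sector decomposition with spectral projections $E_P(\Lambda)$, $\Lambda\Subset\mathbb{R}$, writing the perturbation as $\sqrt{k}\,PE_P(\Lambda)\,\sqrt{k}$ with $\sqrt{k}\le C\langle r\rangle^{-\nu_k/2}$ locally smooth and $PE_P(\Lambda)$ bounded, and then removes the cutoff using $W^{\pm}(H_k,H_0;E_P(\Lambda))=W^{\pm}(H_k,H_0)E_P(\Lambda)$. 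One caveat on your ``clean uniform packaging'': the factor $G=\langle r\rangle^{-\nu_k/2}(P+1)^{1/2}$ is not even $H_0$-bounded on all of $\mathcal{H}_f$, since $H_0=D_r^2$ gives no control of $P$, so that global factorization cannot be made to work and the uniformity in $j$ you worry about genuinely fails ($\lambda_j\to\infty$). But it is also not needed: existence of the strong limits on each sector (or on each $E_P(\Lambda)\mathcal{H}_f$), together with the density of their union and the uniform boundedness of the unitaries $e^{itH_k}e^{-itH_0}$, already yields existence on all of $\mathcal{H}_f$; mutual adjointness and completeness then follow by running the argument in both directions, as you say. So drop the $(P+1)^{1/2}$ factorization and the uniformity discussion, and keep the bounded spectral cutoff of $P$ in the middle factor -- that is the paper's fix and it closes the only gap in your write-up.
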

\begin{proof}
Let $E_{P}(\Lambda)$ be the spectral projections of $P$ on $\Lambda$ with $\Lambda \Subset \mathbb{R}$. We decompose the perturbation term with identifier $E_{P}(\Lambda)$ as follows:
\begin{gather*}
H_{k}E_{P}(\Lambda)-E_{P}(\Lambda)H_{0} = \sqrt{k} P E_{P}(\Lambda) \sqrt{k}.
\end{gather*}
The limiting absorption principle implies that $\sqrt{k}$ is locally $H_{0}$- and $H_{k}$- smooth. $P E_{P}(\Lambda)$ is bounded. The smooth perturbation theory of Kato implies that the wave operators\\
$W^{\pm}(H_{k}, H_{0}; E_{P}(\Lambda))$ and $W^{\pm}(H_{0}, H_{k}; E_{P}(\Lambda))$ exist and are adjoint each other.

Since $P$ commutes with $H_{0}$ and $H_{k}$,
\begin{gather*}
W^{\pm}(H_{k}, H_{0}; E_{P}(\Lambda)) = W^{\pm}(H_{k}, H_{0}) E_{P}(\Lambda), \\
W^{\pm}(H_{0}, H_{k}; E_{P}(\Lambda)) = W^{\pm}(H_{0}, H_{k}) E_{P}(\Lambda).
\end{gather*}
Hence $W^{\pm}(H_{k}, H_{0})$ and $W^{\pm}(H_{0}, H_{k})$ exist and are adoint each other.
\end{proof}

\begin{Prop}\label{Prop_wave_operator_two_space_gomi}
Suppose that $k$ is short-range or long-range.
Then the wave operators \\
$W^{\pm}(L_{0}, H_{k}; J)$ and $W^{\pm}(H_{k}, L_{0}; J^{*})$ exist and are adjoint each other. 
\end{Prop}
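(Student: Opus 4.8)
The plan is to apply Kato's smooth method (Theorem \ref{Kato's_smooth_method}) with $H = L_{0}$, $H_{0} = H_{k}$ and the given identifier $J$, exactly as in the proof of Theorem \ref{Thm_wave_operator}. The starting observation is that the perturbation $L_{0}J - JH_{k}$ has compactly supported coefficients, regardless of whether $k$ is short- or long-range, and this is what allows the two cases to be treated simultaneously.

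First I would compute the perturbation. For $u \in \mathcal{H}_{f}$, on $M_{\infty}$ the operators $L_{0}$ and $H_{k}$ are given by the same differential expression $D_{r}^{2} + k(r)P$, so
\[
(L_{0}J - JH_{k})u = [D_{r}^{2} + kP, \chi]u = [D_{r}^{2}, \chi]u = -2i\chi^{\prime} D_{r} u - \chi^{\prime \prime} u,
\]
where I used that $P$ acts only in the $\theta$-variables while $k$ and $\chi$ depend only on $r$, so that $[kP, \chi] = 0$. This is the crucial simplification: the entire $k$-dependence drops out, and the perturbation is a first-order differential operator whose coefficients $\chi^{\prime}, \chi^{\prime \prime}$ are supported in the compact transition region $\{\tfrac{1}{2} \leq r \leq 1\}\times N$.

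Next I would factorize $L_{0}J - JH_{k} = G^{*}G_{0}$. Choose $\rho \in C_{0}^{\infty}$ with $\rho \equiv 1$ on the support of $\chi^{\prime}$ and $\chi^{\prime \prime}$, let $G_{0}$ be the operator $u \mapsto (\rho D_{r}u, \rho u) \in \mathcal{H}_{f}\oplus\mathcal{H}_{f}$, and let $G^{*}(v_{1}, v_{2}) = -2i\chi^{\prime} v_{1} - \chi^{\prime \prime} v_{2}$, where $v_{1}, v_{2}$ (supported in the transition region $M_{\infty}\cap M_{f}$) are viewed in $\mathcal{H}$ through the natural identification there. Since $\rho \equiv 1$ on $\mathrm{supp}\,\chi^{\prime} \cup \mathrm{supp}\,\chi^{\prime \prime}$, one checks $G^{*}G_{0} = L_{0}J - JH_{k}$. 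Here $G_{0}$ is a first-order differential operator with compactly supported coefficients, hence $H_{k}$-bounded, and $G$ is a compactly supported multiplication operator (composed with the identification), hence $L_{0}$-bounded. The remaining point is local smoothness: I would invoke that first-order differential operators with compactly supported smooth coefficients are locally $L_{0}$-smooth (as noted in the proof of Theorem \ref{Thm_wave_operator}, via Theorem \ref{Thm_Limiting_Absorption_Principle}) and locally $H_{k}$-smooth (via the limiting absorption principle for $H_{k}$ already used for $\sqrt{k}$ in Proposition \ref{Prop_wave_operator_free_k_short}). Thus $G_{0}$ is $H_{k}$-smooth and $G$ is $L_{0}$-smooth on any $\Lambda \Subset \mathbb{R}_{+}\setminus(\sigma_{pp}(L_{0})\cup\sigma_{pp}(H_{k}))$.

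Finally, taking a sequence of such intervals $\{\Lambda_{n}\}$ exhausting the cores of the spectra of $L_{0}$ and $H_{k}$ up to a set of Lebesgue measure zero — possible because neither operator has singular continuous spectrum and their eigenvalues do not accumulate in $\mathbb{R}_{+}$, while both have essential spectrum $[0, \infty)$ — Theorem \ref{Kato's_smooth_method} gives the existence of $W^{\pm}(L_{0}, H_{k}; J)$ and $W^{\pm}(H_{k}, L_{0}; J^{*})$, and the adjoint relation between them is part of its conclusion. I expect the cross-space factorization to be harmless, since the coefficients are supported where $\mathcal{H}$ and $\mathcal{H}_{f}$ coincide; the one step requiring care is the local $H_{k}$-smoothness of $G_{0}$, i.e. the limiting absorption principle for the reference operator $H_{k}$ on the two-ended cylinder $M_{f} = \mathbb{R}\times N$, which reduces after decomposing into eigenspaces of $P$ to a one-dimensional principle for $D_{r}^{2} + k(r)\lambda_{m}$ and follows from the same Mourre/commutator arguments as in Sections \ref{Application of Mourre Theory} and \ref{Limiting Absorption Principle}.
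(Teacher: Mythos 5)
Your proposal is correct and is essentially the paper's own argument: the paper likewise observes that $L_{0}J - JH_{k}$ is a differential operator with smooth compactly supported coefficients, factorizes it into a product of locally smooth first-order operators, and invokes Kato's smooth method. The only small caveat is that $L_{0}$ is only guaranteed to equal $D_{r}^{2}+kP$ on $(1,\infty)\times N$, so on the transition region $\frac{1}{2}\leq r\leq 1$ the perturbation need not reduce exactly to $[D_{r}^{2},\chi]$; but it is still a second-order operator with compactly supported coefficients, so the argument goes through unchanged.
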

\begin{proof}
The perturbation $L_{0}J - J(D_{r}^{2} + k(r) P)$ can be decomposed into a sum of products of first-order differential operator with smooth compactly supported coefficients. Hence we can apply the smooth method of Kato.
\end{proof}
 
Now we obtain the following:

\begin{Thm}\label{Thm_wave_operator_two_space_k_short}
Suppose that $k$ is short-range.
Then the wave operators $W^{\pm}(L_{0}, H_{0}; J)$ and $W^{\pm}(H_{0}, L_{0}; J^{*})$ exist and are adjoint each other. $W^{\pm}(L_{0}, H_{0}; J) \mathcal{H}_{f}^{\mp} = 0$. $W^{\pm}(L_{0}, H_{0}; J)$  and $W^{\pm}(H_{0}, L_{0}; J^{*})$ are isometric on $\mathcal{H}_{f}^{\pm}$ and $P_{ac}(L_{0}) \mathcal{H}$, respectively, and the asymptotic completeness
\begin{gather*}
W^{\pm}(L_{0}, H_{0}; J) \mathcal{H}_{f}^{\pm} = P_{ac}(L_{0}) \mathcal{H}
\end{gather*}
holds.
\end{Thm}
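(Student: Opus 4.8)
The plan is to realize the comparison $(L_{0}, H_{0})$ as a composition of the two comparisons already in hand—the one-dimensional free pair $(H_{k}, H_{0})$ of Proposition \ref{Prop_wave_operator_free_k_short} and the transplantation pair $(L_{0}, H_{k})$ of Proposition \ref{Prop_wave_operator_two_space_gomi}—and then to read off completeness and the $\mathcal{H}_{f}^{\pm}$ structure from two product identities. First I would invoke the multiplication theorem for wave operators (Yafaev \cite{Ya92}); its hypotheses are met because the inner wave operators map into the absolutely continuous subspaces and the identifiers are bounded. This yields the existence of
\begin{gather*}
W^{\pm}(L_{0}, H_{0}; J) = W^{\pm}(L_{0}, H_{k}; J)\,W^{\pm}(H_{k}, H_{0}), \\
W^{\pm}(H_{0}, L_{0}; J^{*}) = W^{\pm}(H_{0}, H_{k})\,W^{\pm}(H_{k}, L_{0}; J^{*}).
\end{gather*}
Writing $W := W^{\pm}(L_{0}, H_{0}; J)$, the standard duality argument—valid once both limits exist—identifies $W^{\pm}(H_{0}, L_{0}; J^{*})$ with $W^{*}$.

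Next I would compute the two products, again by the multiplication theorem:
\begin{gather*}
W^{*}W = W^{\pm}(H_{0}, H_{0}; J^{*}J), \qquad WW^{*} = W^{\pm}(L_{0}, L_{0}; JJ^{*}).
\end{gather*}
Here $J^{*}J$ is multiplication by $\chi^{2}$ on $\mathcal{H}_{f}$ and $JJ^{*}$ is multiplication by $\chi^{2}$ cut off to $M_{\infty}$ on $\mathcal{H}$. For the first, since $H_{0} = D_{r}^{2}$ commutes with $P$ and is purely absolutely continuous, the evaluation reduces to a one-dimensional propagation statement: a free packet of momentum sign $\pm$ escapes to $r = \pm\infty$ as $t \to \pm\infty$, so $\text{s-}\lim_{t\to\pm\infty} e^{itH_{0}} 1_{\{r\geq 1\}} e^{-itH_{0}} = P_{\pm}$, the projection onto $\mathcal{H}_{f}^{\pm}$; as $\chi^{2} - 1_{\{r\geq 1\}}$ has compact $r$-support it drops out, giving $W^{*}W = P_{\pm}$. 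For the second I would note that $1 - JJ^{*}$ is a bounded multiplication operator supported in a precompact region of $M$, hence $L_{0}$-compact by Lemma \ref{Lem_Rellich}; combined with $e^{-itL_{0}}u \rightharpoonup 0$ for $u \in P_{ac}(L_{0})\mathcal{H}$, this yields $\text{s-}\lim_{t\to\pm\infty}(1 - JJ^{*}) e^{-itL_{0}} P_{ac}(L_{0}) = 0$ and hence $WW^{*} = P_{ac}(L_{0})$.

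The conclusions then follow formally. From $W^{*}W = P_{\pm}$ I get that $W$ is isometric on $\mathcal{H}_{f}^{\pm}$ and annihilates $\mathcal{H}_{f}^{\mp}$, which is the claim $W^{\pm}(L_{0}, H_{0}; J)\mathcal{H}_{f}^{\mp} = 0$ together with the asserted isometry. From $WW^{*} = P_{ac}(L_{0})$ I get that $W^{*} = W^{\pm}(H_{0}, L_{0}; J^{*})$ is isometric on $P_{ac}(L_{0})\mathcal{H}$ and that $\mathrm{Ran}\,W \supseteq P_{ac}(L_{0})\mathcal{H}$; since $\mathrm{Ran}\,W \subseteq P_{ac}(L_{0})\mathcal{H}$ holds automatically for wave operators, equality follows, and together with the vanishing on $\mathcal{H}_{f}^{\mp}$ this gives the asymptotic completeness $W^{\pm}(L_{0}, H_{0}; J)\mathcal{H}_{f}^{\pm} = P_{ac}(L_{0})\mathcal{H}$.

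I expect the step $WW^{*} = W^{\pm}(L_{0}, L_{0}; JJ^{*}) = P_{ac}(L_{0})$ to be the main obstacle: it is exactly where the single-end geometry of $M$ must be reconciled with the double-end geometry of $M_{f}$, and where the Kato-smoothness of $\langle r\rangle^{-s}$ from Theorem \ref{Thm_Kato_Smooth_Operators}—equivalently the relative compactness behind the RAGE-type vanishing—is indispensable. The one-dimensional identification of $J^{*}J$ with $P_{\pm}$ is the remaining quantitative input, but it is classical since $P$ decouples from the $r$-dynamics.
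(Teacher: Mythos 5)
Your proposal is correct and follows essentially the same route as the paper: existence and adjointness come from composing Propositions \ref{Prop_wave_operator_free_k_short} and \ref{Prop_wave_operator_two_space_gomi} via the chain rule, and the isometry/completeness claims rest on the same two propagation facts, namely the one-dimensional escape of $e^{-itH_{0}}$-packets of momentum sign $\pm$ to $r=\pm\infty$, and the vanishing of the compactly supported part of the identifier along $e^{-itL_{0}}P_{ac}(L_{0})$. The only cosmetic differences are that you package these facts as evaluations of $W^{*}W$ and $WW^{*}$ rather than as direct limits of $\|Je^{-itH_{0}}u\|$ and $\|(1-\chi)e^{-itL_{0}}u\|$, and that you justify the second fact by a RAGE-type compactness argument where the paper invokes the local $L_{0}$-smoothness of $1-\chi$; both justifications are valid.
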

\begin{proof}
It follows from Proposition \ref{Prop_wave_operator_free_k_short} and Proposition \ref{Prop_wave_operator_two_space_gomi} that the wave operators $W^{\pm}(L_{0}, H_{0}; J)$ and $W^{\pm}(H_{0}, L_{0}; J^{*})$ exist and are adjoint each other. 

For $u \in \mathcal{H}_{f}^{\pm}$, 
\begin{gather*}
\lim_{t \to  \pm \infty}\| J e^{-i t H_{0}} u \| = \| u \|, \\
\lim_{t \to  \mp \infty}\| J e^{-i t H_{0}} u \| = 0.
\end{gather*}
Hence $W^{\pm}(L_{0}, H_{0}; J) \mathcal{H}_{f}^{\mp} = 0$, and $W^{\pm}(L_{0}, H_{0}; J)$ is isometric on $\mathcal{H}_{f}^{\pm}$.

To show the isometricity of $W^{\pm}(H_{0}, L_{0}; J^{*})$, it is enough to check that 
\begin{gather*}
\lim_{t \to \pm \infty} \| ( 1 - \chi ) e^{-i t L_{0}} u\| = 0
\end{gather*}
for $u \in P_{ac}(L_{0}) \mathcal{H}$. This follows from the local $L_{0}$-smoothness of $1 - \chi$.
\if0  ********************************************************************************
We first show that the wave operators $W^{\pm}(L_{0}, D_{r}^{2}; JE_{P}(\Lambda))$ and $W^{\pm}(D_{r}^{2}, L_{0}; E_{P}(\Lambda)J^{*})$ exist where $E_{P}(\Lambda)$ is the spectral projection of $P$ on $\Lambda$ and $\Lambda$ is any bounded Borel set in $\mathbb{R}$.
In fact, the perturbation admits a factorization into a product of two Kato-smooth operators: 
\begin{align*}
T_{\Lambda} :=& L_{0} J E_{P}(\Lambda) - J E_{P}(\Lambda) D_{r}^{2}\\
=& (D_{r}^{2}J - J D_{r}^{2})E_{P}(\Lambda) + k(r) J P\\
=& (2i^{-1}\chi^{\prime}D_{r} - \chi^{\prime \prime} ) + \sqrt{k(r)} J E_{P}(\Lambda) P \sqrt{k(r)},
\end{align*}
$\sqrt{k(r)}$ is locally $L_{0}$- and $L$- smooth by Theorem \ref{Thm_Kato_Smooth_Operators} with $s = \frac{\rho}{2}$ and the usual limiting absorption principle on $\mathbb{R}$, and $E_{P}(\Lambda) P$ is bounded. Note that we used the fact that $P$ and $r$ commute in the above calculation.

Since
\begin{gather}
W^{\pm}(L_{0}, D_{r}^{2}; JE_{P}(\Lambda)) 
= s\text{-}\lim_{t \to \pm \infty} e^{i t L_{0}} J e^{-itD_{r}^{2}} \cdot E_{P}(\Lambda), \\
W^{\pm}(D_{r}^{2}, L_{0}; E_{P}(\Lambda)J^{*})
=E_{P}(\Lambda) s\text{-}\lim_{t \to \pm\infty} e^{i t D_{r}^{2}} J^{*} e^{-i t L_{0}},
\end{gather}
we obtain the desired result.
      ********************************************************************************
\fi
\end{proof}

Combining Theorem \ref{Thm_wave_operator} and Theorem \ref{Thm_wave_operator_two_space_k_short}, we obtain Theorem \ref{Thm_wave_operator_two_space} by virtue of the chain rule of wave operators.
Conversely, Theorem \ref{Thm_wave_operator_two_space_k_short} and Theorem \ref{Thm_wave_operator_two_space} imply Theorem \ref{Thm_wave_operator}. Theorem \ref{Thm_wave_operator_two_space} is essentially solved in \cite{IN10}. Hence Theorem \ref{Thm_wave_operator} with $k(r) = r^{-2}$ is essentially solved in \cite{IN10}. Our result may be considered as an extention of \cite{IN10}.


In the following of this section, we consider smooth long-range $k$.
We also suppose that the coefficient $a_{1}$ in $E$ is separated into two parts, long-range $\theta$- independent term and short-range term:
\begin{gather}
a_{1} = a_{1}^{L}(r) + a_{1}^{S}(r, \theta) \label{a1_short_long} \\
|\del_{r}^{l} a_{1}^{L}(r)| \leq C_{l} \langle r  \rangle^{-\nu_{a_{1}^{L}} -l}, \nu_{a_{1}^{L}} > 0 \label{a1_long}　\\
|\del_{r}^{l} \del_{\theta}^{\alpha} a_{1}^{S}(r, \theta)| \leq C_{l, \alpha} \langle r  \rangle^{-\nu_{a_{1}^{S}} -l},\nu_{a_{1}^{S}} > 1. \label{a1_short}
\end{gather}
Set
\begin{gather*}
H_{L} = D_{r}(1 + a_{1}^{L}(r))D_{r} + k(r) P.
\end{gather*}
We formulate a long-range scattering theory for the triplet  
$(H_{L}, H_{0}; J^{\pm})$ with modified identifiers $J^{\pm} \in B(\mathcal{H}_{f}, \mathcal{H}_{f})$. Since $P$ commutes with $H_{0}$ and $H_{L}$, it is natural to choose $J^{\pm}$ as
\begin{gather}
J^{\pm} = \int J_{\lambda}^{\pm} dE_{P}(\lambda) \label{J_1}
\end{gather}
where
\begin{gather*}
P = \int \lambda dE_{P}(\lambda)
\end{gather*}
is the spectral decomposition of $P$, and $J_{\lambda}^{\pm}$ are bounded operators $L^{2}(\mathbb{R}) \to L^{2}(\mathbb{R})$. Through this decomposition, the problem reduces to the long-range scattering for the triplet \\
$ \left( H_{L, \lambda}, H_{0, \lambda}; J_{\lambda}^{\pm} \right)$ on the real line, where $H_{L, \lambda} = D_{r}(1+a_{1}^{L})D_{r} + \lambda k(r)$ and $H_{0, \lambda} = D_{r}^{2}$ are self-adjoint operators on $L^{2}(\mathbb{R})$.
We choose $J_{\lambda}^{\pm}$ as a pseudo-differential operator with oscillating symbols
\begin{gather}
J_{\lambda}^{\pm} = \chi_{\lambda}^{\pm}(D_{r}) J(\Phi_{\lambda}^{\pm}, a^{\pm}) \label{J_2}\\
J(\Phi_{\lambda}^{\pm}, a^{\pm}) u(r) =  \frac{1}{(2\pi)^{\frac{1}{2}}} \int_{\mathbb{R}} e^{i r \rho+ i \Phi_{\lambda}^{\pm}(r, \rho)}a^{\pm}(r, \rho) \hat{u}(\rho) d\rho \notag \\
a^{\pm}(r, \rho ) = \eta (r) \psi (\rho^{2}) \sigma^{\pm}(r, \rho ) .\label{J_3}
\end{gather}
Here $\eta \in C^{\infty}(\mathbb{R})$ such that $\eta(r) = 0$ near $r=0$ and $\eta(r) = 1$ for large $|r|$, $\psi \in C_{0}^{\infty}(\mathbb{R}_{+}), \chi_{\lambda}^{\pm} \in C_{0}^{\infty}(\mathbb{R})$ and $\sigma^{\pm} = 1$ if $\pm r \rho > 0$ and $\sigma^{\pm} = 0$ if $\pm r \rho \leq 0$. We seach for a PDO $J_{\lambda}^{\pm}$ such that the perturbation
\begin{gather*}
T_{\lambda}^{\pm} = H_{L, \lambda} J_{\lambda}^{\pm} - J_{\lambda}^{\pm} H_{0, \lambda}
\end{gather*}
admits a factorization into a product of $H_{L, \lambda}$- and $H_{0, \lambda}$- smooth operators. 

Roughly speaking, up to compact terms, $T_{\lambda}^{\pm}$ is also a PDO with symbol
\begin{gather*}
t_{\lambda}^{\pm}(r, \rho) = ((1+a_{1}^{L}(r)) (D_{r} + \rho )^{2} - \rho^{2} + \lambda k(r)) e^{i\Phi_{\lambda}^{\pm}(r, \rho)}a^{\pm}(r, \rho) .
\end{gather*}
Let us compute
\begin{align*}
&e^{-i\Phi_{\lambda}^{\pm}}(r, \rho ) ( (1+a_{1}^{L}(r))(D_{r} + \rho )^{2} - \rho^{2} + \lambda k(r)) e^{i\Phi_{\lambda}^{\pm}}(r, \rho) \\
=& (1+a_{1}^{L}(r))(\nabla \Phi_{\lambda}^{\pm} + \rho)^{2} + \lambda k(r) - \rho^{2}
 - i (1+a_{1}^{L}(r)) \Delta \Phi_{\lambda}^{\pm} .
\end{align*}
We want to find  $\Phi_{\lambda}^{\pm}$  such that
\begin{gather*}
(1+a_{1}^{L}(r))(\nabla \Phi_{\lambda}^{\pm} + \rho)^{2} + \lambda k(r) - \rho^{2}
\end{gather*}
is ``small''. In the case $a_{1}^{L} = 0,$ and $ \nu_{k} >  \frac{1}{2}$, it is enough to set
\begin{gather*}
\Phi_{\lambda}^{\pm}(r, \rho)  = - \frac{1}{2\rho} \int_{0}^{r} \lambda k(s) ds.
\end{gather*}
For general $a_{1}^{L}$ and $\nu_{k} > 0$, we need to apply the method of succesive approximations and to keep $[\nu_{k}^{-1}]$ (the largest integer which does not exceed $\nu_{k}^{-1}$) iterations:

\begin{Lem}\label{Lemma_modifier}
Let  $a_{1}^{L}(r), k(r) \in C^{\infty}(\mathbb{R})$ satisfy the smooth long-range condition:
\begin{gather}
| \del_{r}^{l} a_{1}^{L}(r) | \leq C \langle r \rangle^{-\nu_{a_{1}^{L}}- l} \notag \\
| \del_{r}^{l} k(r) | \leq C \langle r \rangle^{-\nu_{k}- l} 
\end{gather}
with $l \in \mathbb{N}$, and $\nu = \max \{ \nu_{a_{1}^{L}},  \nu_{k} \} > 0$. We assume that $\nu^{-1}$ is not an integer.
Let $\Lambda \Subset \mathbb{R} \setminus \{ 0 \}$.
Then for large enough $R$, there exists a $C^{\infty}$-function $\Phi^{\pm}(r, \rho )$ defined on $(r, \rho) \in \Gamma^{\pm}(R, \Lambda) = \{ (r, \rho) | |r| > R, \rho \in \Lambda, \pm r \rho > 0  \}$ such that
\begin{gather}
| \del_{r}^{l} \del_{\rho}^{k} \Phi^{\pm}(r, \rho ) | \leq C (1 + |r|)^{1- \nu - l} \label{Phi_decay}\\
R[ \Phi^{\pm} ] := (1+a_{1}^{L}) | \nabla \Phi^{\pm} + \rho |^{2} + k(r) - \rho^{2} \notag \\
| \del_{r}^{l} \del_{\rho}^{k} R[ \Phi ](r, \rho ) | \leq C (1 + |r|)^{-1 - \epsilon - l} \notag
\end{gather}
where $\nabla = \del_{r}$ and $\epsilon = \nu([\nu^{-1}]+1) - 1 > 0$.
\end{Lem}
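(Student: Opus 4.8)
The plan is to reduce the construction of the modifier phase to solving the eikonal-type equation $R[\Phi^{\pm}]=0$ approximately on $\Gamma^{\pm}(R,\Lambda)$ and to carry this out by a truncated successive-approximation scheme. First I would exploit the geometry of $\Gamma^{\pm}(R,\Lambda)$: since $\Lambda \Subset \mathbb{R}\setminus\{0\}$, the frequency $\rho$ stays bounded away from $0$, and by taking $R$ large the functions $k(r)$ and $a_{1}^{L}(r)$ are as small as we wish, so that $1+a_{1}^{L}(r)>0$ and $\rho^{2}-k(r)>0$ there. Viewing $R[\Phi]=(1+a_{1}^{L})(\partial_{r}\Phi+\rho)^{2}+k-\rho^{2}$ as a quadratic in the scalar $\partial_{r}\Phi$, the equation $R[\Phi]=0$ has the explicit root
\[
\partial_{r}\Phi = -\rho + \operatorname{sgn}(\rho)\sqrt{\frac{\rho^{2}-k(r)}{1+a_{1}^{L}(r)}} =: g(r,\rho),
\]
where the branch is chosen so that $\partial_{r}\Phi+\rho$ has the sign of $\rho$ (making $g$ small). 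Thus an exact phase is available by integration, and the real work is only to extract from $g$ a phase enjoying the clean polynomial bounds claimed in the statement.

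Next I would organize $g$ by homogeneity in the decaying data. Write $g=\sum_{j\ge 1}g_{j}$, where $g_{j}$ collects the terms of the binomial expansions of the two square roots that are products of exactly $j$ factors drawn from $\{k,a_{1}^{L}\}$ (times a smooth bounded function of $\rho$); the series converges for $R$ large. The decay hypotheses then give $|\partial_{r}^{l}\partial_{\rho}^{m}g_{j}|\le C\langle r\rangle^{-j\nu-l}$, where $\nu$ is the operative rate of the statement (the slower of the two decays, so that each factor contributes at least $\langle r\rangle^{-\nu}$, each $r$-derivative costs one power of $r$, and $\rho$-derivatives are harmless because $\rho$ is bounded away from $0$). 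The leading term $g_{1}=-\tfrac{1}{2\rho}\bigl(k+a_{1}^{L}\rho^{2}\bigr)$ is precisely the solution of the linearized transport equation $2\rho\,\partial_{r}\Phi_{1}=-(k+a_{1}^{L}\rho^{2})$. I would then set
\[
\Phi^{\pm}(r,\rho) = \sum_{j=1}^{[\nu^{-1}]} \int_{R\,\operatorname{sgn} r}^{r} g_{j}(s,\rho)\,ds,
\]
keeping only the indices $j\le[\nu^{-1}]$, for which $j\nu<1$ and the integral genuinely grows like $\langle r\rangle^{1-j\nu}$, and discarding the tail $\sum_{j>[\nu^{-1}]}g_{j}$, whose integral converges and will instead be absorbed into the residual.

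The verification splits in two. Since the dominant contribution comes from $j=1$, differentiation gives $|\partial_{r}^{l}\partial_{\rho}^{m}\Phi^{\pm}|\le C\langle r\rangle^{1-\nu-l}$, which is the first bound \eqref{Phi_decay}. For the residual, because $g$ solves the eikonal equation exactly one has
\[
R[\Phi^{\pm}] = (1+a_{1}^{L})\bigl(\partial_{r}\Phi^{\pm}-g\bigr)\bigl(\partial_{r}\Phi^{\pm}+g+2\rho\bigr),
\]
and here $\partial_{r}\Phi^{\pm}-g=-\sum_{j>[\nu^{-1}]}g_{j}=O\bigl(\langle r\rangle^{-([\nu^{-1}]+1)\nu}\bigr)$ while the second factor is $O(1)$. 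This yields $R[\Phi^{\pm}]=O(\langle r\rangle^{-1-\epsilon})$ with $\epsilon=\nu([\nu^{-1}]+1)-1$, and the same estimate survives differentiation by the Leibniz rule together with the per-factor decay recorded above.

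The main obstacle is not the algebra but the uniform derivative bookkeeping and the correct stopping rule for the iteration. Each successive-approximation step improves the residual decay by exactly one factor $\langle r\rangle^{-\nu}$, and one must halt at the last index before the exponent $j\nu$ crosses $1$; here the hypothesis that $\nu^{-1}\notin\mathbb{Z}$ is essential, since it rules out a borderline term with $j\nu=1$ (which would integrate to a logarithm and destroy the clean power bounds) and at the same time forces the strict inequality $\epsilon>0$. Keeping all estimates uniform in $\rho\in\Lambda$ throughout, and checking that the discarded tail contributes only to the $O(\langle r\rangle^{-1-\epsilon-l})$ error and never to the phase bounds, is the delicate part of the argument.
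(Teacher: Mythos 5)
Your proposal is correct, and it reaches the same destination as the paper by a recognizably different road. The paper never writes down the exact solution of the eikonal equation: it runs a recursive successive-approximation scheme $\Phi^{(N+1)}=\Phi^{(N)}+\phi^{(N+1)}$, where each correction $\phi^{(N+1)}=-\frac{1}{2\rho}\int_{R}^{r}\bigl(|\nabla\Phi^{(N)}|^{2}-|\nabla\Phi^{(N-1)}|^{2}\bigr)\,ds$ is built from the previous residual, the identity $R[\Phi^{(N+1)}]=(1+a_{1}^{L})\bigl(|\nabla\Phi^{(N+1)}|^{2}-|\nabla\Phi^{(N)}|^{2}\bigr)$ is verified by induction, and the bounds $|\del_{r}^{l}\del_{\rho}^{k}\phi^{(N)}|\leq C(1+|r|)^{1-N\nu-l}$ propagate through the recursion; the process stops at $N=[\nu^{-1}]$. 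You instead observe that in one space dimension $R[\Phi]=0$ is a scalar quadratic in $\del_{r}\Phi$ with the explicit root $g=-\rho+\operatorname{sgn}(\rho)\sqrt{(\rho^{2}-k)/(1+a_{1}^{L})}$, expand $g=\sum_{j}g_{j}$ by homogeneity in $(k,a_{1}^{L})$, and truncate at $j=[\nu^{-1}]$; your leading term $g_{1}=-\frac{1}{2\rho}(k+a_{1}^{L}\rho^{2})$ agrees with the paper's $\nabla\Phi^{(1)}$ to top order, as it must. What your route buys is that the residual estimate becomes a one-line factorization, $R[\Phi^{\pm}]=(1+a_{1}^{L})(\del_{r}\Phi^{\pm}-g)(\del_{r}\Phi^{\pm}+g+2\rho)$, with the first factor a Taylor tail of order $\langle r\rangle^{-([\nu^{-1}]+1)\nu}$, so no inductive identity needs to be checked; the price is that you must justify the binomial expansion and its termwise derivative bounds, which requires $R$ large so that $|k|<\inf_{\Lambda}\rho^{2}$ and $|a_{1}^{L}|<1$ on $\Gamma^{\pm}(R,\Lambda)$ — exactly the smallness you invoke. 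Your identification of the role of $\nu^{-1}\notin\mathbb{Z}$ (no borderline $j\nu=1$ term producing a logarithm, and $\epsilon>0$) matches the paper's stopping rule. One remark: you silently read $\nu$ as the \emph{slower} of the two decay rates, i.e.\ $\nu=\min\{\nu_{a_{1}^{L}},\nu_{k}\}$, which is what the argument actually requires; the $\max$ appearing in the statement (and again later in the paper) is evidently a typo, so your reading is the right one.
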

\begin{proof}
We only consider the case $\Phi^{+}$ with $\Lambda \subset \mathbb{R}_{+}$, and abbreviate ``+'' . Other cases are similar to prove.

We fix $R > 0$ large enough such that $|a_{1}^{L}(r)| < \frac{1}{2}$ for $|r| > R$.
Set
\begin{gather*}
\Phi^{(0)} (r, \rho): = 0, \\
\Phi^{(1)} (r, \rho): = -\int_{R}^{r} \frac{k(s) + a_{1}^{L}(s)\rho^{2}}{2(1+a_{1}^{L}(s) \rho)} ds, \\
\Phi^{(N+1)} : = \Phi^{(N)} + \phi^{(N+1)}\\
\phi^{(N+1)}(r, \rho): = -\frac{1}{2\rho}\int_{R}^{r} \left( |\nabla \Phi^{(N)})(s, \rho)|^{2} - |\nabla \Phi^{(N-1)}(s, \rho)|^{2}  \right) ds.
\end{gather*}
with $N \geq 1$.

A simple computation gives 
\begin{align*}
R[\Phi^{(2} ] &= (1+a_{1}^{L})( |\nabla\Phi^{(2)}|^{2} - |\nabla\Phi^{(1)}|^{2} ), \\
R[\Phi^{(N+1)} ] &= (1+a_{1}^{L})( |\nabla\Phi^{(N+1)}|^{2} - |\nabla\Phi^{(N)}|^{2} ) +
R[\Phi^{(N)} ] + 2(1+a_{1}) \langle \nabla \phi^{(N+1)}, \rho \rangle.
\end{align*}
Hence by inducetion we have
\begin{align*}
R[\Phi^{(N+1)} ] &= (1+a_{1}^{L})( |\nabla\Phi^{(N+1)}|^{2} - |\nabla\Phi^{(N)}|^{2} ) .
\end{align*}
We have uniformly for $\rho \in \Lambda$,
\begin{align*}
|\del_{r}^{l}\del_{\rho}^{k}\Phi^{(N)}| & \leq C (1+|r|)^{1-\nu -l},\\
|\del_{r}^{l}\del_{\rho}^{k}\phi^{(N)}| & \leq C (1+|r|)^{1-N\nu -l},\\
|\del_{r}^{l}\del_{\rho}^{k}R[\Phi^{(N)}]| & \leq C (1+|r|)^{-(1+N)\nu -l}.
\end{align*}
It is now sufficient to set $\Phi = \Phi^{([\nu^{-1}])}$.
\end{proof}

From now on, we assume that $\Phi_{\lambda}^{\pm}$ satisfy the conclusions of Lemma \ref{Lemma_modifier} with $k$ replaced by $\lambda k$. We also assume that $\eta(r) = 0$ if $|r| < R$ and $\chi_{\lambda}^{\pm}(\rho) =1$ near $\{ \rho + \nabla_{r}\Phi_{\lambda}^{\pm}(r, \rho) : \rho^{2} \in \text{supp} \psi , |r| > R \} $.
Now we state the existence of modified wave operators:

\begin{Lem}
The wave operators
\begin{gather}
W^{\pm}(H_{L}, H_{0}; J^{\pm}), W^{\pm}(H_{0}, H_{L}; (J^{\pm})^{*} ) \label{WO_pp}
\end{gather}
and
\begin{gather}
W^{\pm}(H_{L}, H_{0}; J^{\mp}), W^{\pm}(H_{0}, H_{L}; (J^{\mp})^{*} ) \label{WO_pm}
\end{gather}
exist. Operators \eqref{WO_pp} as well as \eqref{WO_pm} are adjoint each other.
\end{Lem}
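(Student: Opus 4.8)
The plan is to apply the smooth method of Kato (Theorem~\ref{Kato's_smooth_method}) fiberwise, after diagonalizing $P$. Since $N$ is compact and $P$ is elliptic and positive, $P$ has purely discrete spectrum, so $L^{2}(N, \mu) = \bigoplus_{j} \mathcal{H}_{\lambda_{j}}$ decomposes into eigenspaces and $\mathcal{H}_{f} = \bigoplus_{j} L^{2}(\mathbb{R}) \otimes \mathcal{H}_{\lambda_{j}}$. Because $P$ commutes with $H_{0}$ and $H_{L}$ and $J^{\pm} = \int J_{\lambda}^{\pm} \, dE_{P}(\lambda)$, on the $j$-th summand the three operators act as $H_{0, \lambda_{j}} = D_{r}^{2}$, $H_{L, \lambda_{j}} = D_{r}(1 + a_{1}^{L})D_{r} + \lambda_{j} k$, and $J_{\lambda_{j}}^{\pm}$. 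It therefore suffices to construct each fibered wave operator with bounds uniform in $\lambda_{j}$ and then reassemble: strong convergence on the dense set of finite combinations of fibers, together with the uniform boundedness of $e^{itH_{L}} J^{\pm} e^{-itH_{0}}$, propagates to all of $\mathcal{H}_{f}$.

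For a fixed fiber I would feed the triplet $(H_{L, \lambda}, H_{0, \lambda}; J_{\lambda}^{\pm})$ into Theorem~\ref{Kato's_smooth_method}, so the essential step is to factorize the perturbation $T_{\lambda}^{\pm} = H_{L, \lambda} J_{\lambda}^{\pm} - J_{\lambda}^{\pm} H_{0, \lambda}$ into a product of locally smooth operators. By the symbol computation preceding Lemma~\ref{Lemma_modifier}, and since $\chi_{\lambda}^{\pm}(D_{r})$ commutes with $H_{0,\lambda}$ while its commutator with $H_{L,\lambda}$ has short-range coefficients, $T_{\lambda}^{\pm}$ equals, modulo operators with compactly supported coefficients (locally smooth as in Section~\ref{Wave Operators}), an operator of the form $\chi_{\lambda}^{\pm}(D_{r}) J(\Phi_{\lambda}^{\pm}, q_{\lambda}^{\pm})$ with amplitude $q_{\lambda}^{\pm} = \bigl( R[\Phi_{\lambda}^{\pm}] - i(1 + a_{1}^{L}) \del_{r}^{2} \Phi_{\lambda}^{\pm} \bigr) a^{\pm}$. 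Lemma~\ref{Lemma_modifier} gives $|R[\Phi_{\lambda}^{\pm}]| \leq C \langle r \rangle^{-1-\epsilon}$ on the support of $a^{\pm}$, and \eqref{Phi_decay} gives $|\del_{r}^{2}\Phi_{\lambda}^{\pm}| \leq C \langle r \rangle^{-1-\nu}$; hence $q_{\lambda}^{\pm}$ decays like $\langle r \rangle^{-1-\delta}$ with $\delta = \min(\epsilon, \nu) > 0$. Writing $q_{\lambda}^{\pm} = \langle r \rangle^{-s} b_{\lambda}^{\pm} \langle r \rangle^{-s}$ with $s = \tfrac{1}{2}(1 + \delta) > \tfrac{1}{2}$ and $b_{\lambda}^{\pm}$ a bounded symbol, and absorbing the resulting bounded middle factor into the left-hand side, we obtain a factorization $T_{\lambda}^{\pm} = G^{*} G_{0}$ with $G_{0} = \langle r \rangle^{-s}$ and $G$ equal to $\langle r \rangle^{-s}$ composed with a bounded operator.

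It then remains to verify that $G_{0} = \langle r \rangle^{-s}$ is locally $H_{0, \lambda}$-smooth and $G$ is locally $H_{L, \lambda}$-smooth; the latter reduces to the smoothness of $\langle r \rangle^{-s}$, which is preserved under left multiplication by bounded operators. For $H_{0, \lambda} = D_{r}^{2}$ this is the standard one-dimensional limiting absorption principle; for $H_{L, \lambda}$ it follows from the one-dimensional analogue of Section~\ref{Application of Mourre Theory}, with conjugate operator $\tfrac{1}{2}(\chi_{R}^{2} r D_{r} + D_{r} r \chi_{R}^{2})$, which yields the resolvent bounds of Theorem~\ref{Thm_Limiting_Absorption_Principle} on any $\Lambda \Subset \mathbb{R}_{+} \setminus \sigma_{pp}(H_{L, \lambda})$. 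Since such intervals exhaust the spectra up to a null set, Theorem~\ref{Kato's_smooth_method} gives the existence of $W^{\pm}(H_{L, \lambda}, H_{0, \lambda}; J_{\lambda}^{\pm})$ and of $W^{\pm}(H_{0, \lambda}, H_{L, \lambda}; (J_{\lambda}^{\pm})^{*})$; their adjoint relation follows by passing to the limit $t \to \pm\infty$ in $\langle e^{itH_{L,\lambda}} J_{\lambda}^{\pm} e^{-itH_{0,\lambda}} f, g \rangle = \langle f, e^{itH_{0,\lambda}} (J_{\lambda}^{\pm})^{*} e^{-itH_{L,\lambda}} g \rangle$, both limits now existing. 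This settles \eqref{WO_pp}, and reassembling over $j$ gives the operators on $\mathcal{H}_{f}$. The case \eqref{WO_pm} is identical once one observes that $\Phi_{\lambda}^{\mp}$ was built so that $R[\Phi_{\lambda}^{\mp}]$ is short-range precisely on the support $\{\pm r\rho < 0\}$ of $a^{\mp}$, so the same factorization applies to $H_{L,\lambda} J_{\lambda}^{\mp} - J_{\lambda}^{\mp} H_{0,\lambda}$.

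The hardest part will be the pseudodifferential bookkeeping and the uniformity in $\lambda$. Reducing $T_{\lambda}^{\pm}$ to its leading amplitude $q_{\lambda}^{\pm}$ requires the full oscillatory symbol calculus for the amplitudes $a^{\pm} e^{i\Phi_{\lambda}^{\pm}}$, with all composition remainders shown to be compactly supported or to decay faster than $\langle r \rangle^{-1}$. More delicate is the uniform-in-$\lambda$ control: the phase $\Phi_{\lambda}^{\pm}$ built from $\lambda k$ carries $\lambda$-dependent seminorms, so the $L^{2}$-boundedness of $J_{\lambda}^{\pm}$ and of $b_{\lambda}^{\pm}$, together with the $H_{L,\lambda}$-smoothness constants, must be estimated uniformly (via Calder\'on--Vaillancourt / Asada--Fujiwara type bounds and uniform Mourre constants) so that the fibered wave operators genuinely assemble into bounded operators and the strong limits survive the direct sum.
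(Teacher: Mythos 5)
Your proposal follows essentially the same route as the paper: reduce to the one-dimensional triplets $(H_{L,\lambda}, H_{0,\lambda}; J_{\lambda}^{\pm})$ via the spectral decomposition of $P$, use the oscillating-symbol calculus to show that $T_{\lambda}^{\pm}$ is an operator with effective symbol of order $\langle r\rangle^{-1-\epsilon}$ (the paper phrases this as $d-e\in\mathcal{S}^{-1-\epsilon}$ so that $\langle r\rangle^{\frac{1+\epsilon}{2}}T_{\lambda}^{\pm}\langle r\rangle^{\frac{1+\epsilon}{2}}$ is bounded, which is your factorization $G^{*}G_{0}$ with $s=\frac{1+\epsilon}{2}$), and then invoke the Kato smooth perturbation theory with the local smoothness of $\langle r\rangle^{-s}$. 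The only substantive addition is your explicit flagging of the uniform-in-$\lambda$ bounds needed to reassemble the fibered limits, a point the paper leaves implicit.
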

\begin{proof}
It is enough to consider the scattering theory for the triplets $(H_{L, \lambda}, H_{0, \lambda}, J_{\lambda}^{\pm})$.

Set $b = (i^{-1}(\del_{r} a_{1}^{L}) \rho + (1+a_{1}^{L})\rho^{2} +\lambda k(r))\chi_{\lambda}^{\pm}(\rho)$. 
$a$ and $b$ are in $\mathcal{S}^{0}$. 
By Theorem \ref{Thm_PDO_FIO_composition}, there exists $d \in \mathcal{S}^{m_{d}}$ with $m_{d} = 0$ such that
\begin{align*}
H_{L, \lambda}J_{\lambda}^{\pm}
&=(D_{r}(1+a_{1}^{L})D_{r} + \lambda k(r))\chi_{\lambda}^{\pm}(D_{r}) J(\Phi_{\lambda}^{\pm}, a^{\pm}) =
b(x, D_{r})J(\Phi_{\lambda}^{\pm}, a^{\pm})\\
&= J(\Phi_{\lambda}^{\pm}, d)
\end{align*}
and admits the asymptotic expansion
\begin{gather*}
d = \sum_{l \geq 0} \frac{1}{l !}d_{l }, \\
d_{l}(r, \rho) = (\del_{\tau}^{l} D_{s}^{l}p)(0, 0,; r, \rho)
\end{gather*}
where
\begin{gather*}
p(s, \tau; r, \rho) = b(r, \rho + \tau + \delta (r, r + s, \rho ) ) a( r + s, \rho )
\end{gather*}
and
\begin{gather*}
\delta(r, q, \rho ) = \int_{0}^{1} (\nabla_{r}\Phi_{\lambda}^{\pm} ) ( (1-t ) r + t q), \rho) dt .
\end{gather*}
In particular, $d_{l} \in \mathcal{S}^{m_{d}-l} = \mathcal{S}^{-l}$ and
\begin{gather*}
d_{0}(r, \rho ) = b(r, \rho + (\nabla_{r} \Phi_{\lambda}^{\pm} )(r, \rho ) ) a(r, \rho ),\\
d_{1}(r, \rho ) = (\del_{\rho}b)(r, \rho + (\nabla_{r} \Phi_{\lambda}^{\pm} )(r, \rho ) ) (D_{r}a)(r, \rho ) \\+
  \langle \del_{\rho}^{2}b(r, \rho + (\nabla_{r} \Phi_{\lambda}^{\pm} )(r, \rho ) ), \frac{1}{2}(\del_{r} D_{r} \Phi_{\lambda}^{\pm})(r, \rho ) \rangle \ a(r, \rho ).
\end{gather*}
\eqref{Phi_decay} implies that $d_{1} \in \mathcal{S}^{-1-\nu}$ where $\nu = \max \{ \nu_{k}, \nu_{a_{1}}^{L} \}$. Hence $d-d_{0} \in \mathcal{S}^{-1-\nu}$.

Set $c(r, \rho) = \rho^{2}$. Then by Theorem \ref{Thm_PDO_FIO_composition} and Theorem \ref{Thm_FIO_PDO_composition}, there exists $e \in \mathcal{S}^{m_{e}}$ with $m_{e} = 0$ such that
\begin{align*}
J_{\lambda}^{\pm}H_{0, \lambda}
&=\chi_{\lambda}^{\pm}(D_{r}) J(\Phi_{\lambda}^{\pm}, a^{\pm}) (D_{r}^{2})\\
&= J(\Phi_{\lambda}^{\pm}, e)
\end{align*}
and admits the asymptotic expansion
\begin{gather*}
e = \sum_{l \geq 0} \frac{1}{l !}e_{l }, \\
e_{l}(r, \rho) = (\del_{\tau}^{l} D_{s}^{l}q)(0, 0,; r, \rho)
\end{gather*}
where
\begin{gather*}
q(s, \tau; r, \rho) =  a( r, \rho + \tau ) \bar{c}(r + s + \gamma(r, \rho + \tau, \rho) , \rho + \tau )
\end{gather*}
and
\begin{gather*}
\gamma(r, \rho, \sigma ) = \int_{0}^{1} (\nabla_{\rho}\Phi_{\lambda}^{\pm} ) ( r, (1- t)\rho + t \sigma ) dt .
\end{gather*}
In particular, $e_{l} \in \mathcal{S}^{m_{e}-l} = \mathcal{S}^{-l}$ and
\begin{gather*}
e_{0}(r, \rho ) = a( r, \rho) \bar{c}( r +  (\nabla_{\rho} \Phi_{\lambda}^{\pm} )(r, \rho ), \rho),\\
e_{1}(r, \rho ) = (\del_{\rho} a)( r, \rho) (D_{r}\bar{c})(r + (\nabla_{\rho} \Phi_{\lambda}^{\pm} )(r, \rho ), \rho )\\
+ a(r, \rho) \big( (D_{r}\del_{\rho}\bar{c})(r + (\nabla_{\rho} \Phi_{\lambda}^{\pm} )(r, \rho ), \rho)
+ \langle (\nabla_{r}D_{r} \bar{c})(r+ (\nabla_{\rho} \Phi_{\lambda}^{\pm} )(r, \rho ), \rho), \frac{1}{2} \nabla_{\rho}\del_{\rho}\Phi_{\lambda}^{\pm}(r, \rho) \rangle
\big).
\end{gather*}
Since $c(r, \rho) = \rho^{2}$, $e_{1}=0$. Hence $e - e_{1} \in \mathcal{S}^{-2}$.

Now we have $T_{\lambda}^{\pm} = J(\Phi_{\lambda}^{\pm}, d-e)$ where $(d-e) - (d_{0} - e_{0} ) \in \mathcal{S}^{-1-\nu}$ and
\begin{gather*}
(d_{0} - e_{0})(r, \rho)
= \big(  i^{-1}(\del_{r}a_{1}^{L})(r) (\rho + \nabla_{r}\Phi_{\lambda}^{\pm}(r, \rho) ) + R[ \Phi_{\lambda}^{\pm}](r, \rho) \big) a(r, \rho),
\end{gather*}
where
\begin{gather*}
R[ \Phi_{\lambda}^{\pm}](r, \rho) = (1+a_{1}^{L}(r)) | \rho + |\Phi_{\lambda}^{\pm}(r, \rho)|^{2} + \lambda k(r) - \rho^{2}.
\end{gather*}
As in Lemma \ref{Lemma_modifier}, we chose $\Phi_{\lambda}^{\pm}$ so that $R[ \Phi_{\lambda}^{\pm}](r, \rho)  a(r, \rho)\in \mathcal{S}^{-1-\epsilon}$ with some $\epsilon > 0$. Therefore $T_{\lambda}^{\pm} = J(\Phi_{\lambda}^{\pm}, d - e)$ with $d - e \in \mathcal{S}^{-1-\epsilon}$ and hence $\langle r \rangle^{\frac{1+\epsilon}{2}} T_{\lambda}^{\pm} \langle r \rangle^{\frac{1+\epsilon}{2}}$ is bounded. The operator $\langle r \rangle^{-\frac{1+\epsilon}{2}}$ is $H_{0, \lambda}$- and $H_{L, \lambda}$-smooth on any positive bounded interval disjoint from eigenvalues of $H_{L, \lambda}$. So the smooth perturbation theory of Kato yields the Lemma.
\end{proof}

Now we show that these wave operators are isometric on suitable subspaces.

\begin{Lem}\label{Wave_Operators_are_isometric_1}
\begin{gather}
\s-lim_{t \to \pm \infty}((J^{\pm})^{*}J^{\pm} - \psi(H_{0}))e^{-iH_{0}t} = 0\label{isometric_1}\\
\s-lim_{t \to \mp \infty}(J^{\pm})^{*}J^{\pm}e^{-iH_{0}t} = 0\label{isometric_2}.
\end{gather}
In particular, if $\Lambda \Subset \mathbb{R}_{+}$ and $\psi \in C_{0}^{\infty}(\mathbb{R}) $ such that $\psi =1$ on $\Lambda$, then
the wave operators $W^{\pm}(H_{L}, H_{0}; J^{\pm})$ are isometric on the subspace $E_{H_{0}}(\Lambda)\mathcal{H}_{f}$ and $W^{\pm}(H_{L}, H_{0}; J^{\mp}) = 0$ .
\end{Lem}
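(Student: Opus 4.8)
The plan is to reduce everything to a one–dimensional computation on the fibres of the spectral decomposition of $P$ and then to run the standard PDO/propagation argument for the free evolution. Since $J^{\pm}=\int J_{\lambda}^{\pm}\,dE_{P}(\lambda)$ by \eqref{J_1} and $H_{0}=D_{r}^{2}$ commutes with $P$, the operators $(J^{\pm})^{*}J^{\pm}=\int (J_{\lambda}^{\pm})^{*}J_{\lambda}^{\pm}\,dE_{P}(\lambda)$ and $\psi(H_{0})$ are both decomposable along $E_{P}$. Hence it is enough to prove, uniformly for $\lambda$ in compact subsets of $\sigma(P)$, the one–dimensional analogues of \eqref{isometric_1} and \eqref{isometric_2} on $L^{2}(\mathbb{R})$ with $H_{0}$ replaced by $D_{r}^{2}$ and $J^{\pm}$ by $J_{\lambda}^{\pm}$; the required uniformity is guaranteed because all symbol bounds coming from Lemma \ref{Lemma_modifier} (in particular \eqref{Phi_decay}) hold uniformly for $\rho\in\Lambda$ and $\lambda$ bounded.

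The first step is to identify $(J_{\lambda}^{\pm})^{*}J_{\lambda}^{\pm}$ as a pseudodifferential operator. The adjoint of the Fourier integral operator $J(\Phi_{\lambda}^{\pm},a^{\pm})$ is an FIO with phase $-\Phi_{\lambda}^{\pm}$, so by the composition calculus (Theorem \ref{Thm_PDO_FIO_composition} and Theorem \ref{Thm_FIO_PDO_composition}, applied as in the previous Lemma) the product $(J_{\lambda}^{\pm})^{*}J_{\lambda}^{\pm}$ is a PDO whose principal symbol is $|a^{\pm}(r,\rho)|^{2}=\eta(r)^{2}\psi(\rho^{2})^{2}\sigma^{\pm}(r,\rho)$, with remainder in $\mathcal{S}^{-1-\epsilon}$. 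Here the cutoff $\chi_{\lambda}^{\pm}(D_{r})$ contributes nothing to the principal symbol, because $\chi_{\lambda}^{\pm}\equiv1$ on the relevant frequency set; and $\sigma^{\pm}$ is smooth on the support of $a^{\pm}$, since there $|r|>R$ and $\psi(\rho^{2})\neq0$ force $|\rho|\ge c>0$, so that $r\rho$ stays bounded away from $0$. (If one prefers \eqref{isometric_1} exactly as written, one takes the amplitude in \eqref{J_3} with $\psi(\rho^{2})^{1/2}$ in place of $\psi(\rho^{2})$; this is immaterial below, since only $\psi\equiv1$ on $\Lambda$ enters the conclusion.)

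The heart of the argument, and the step I expect to be the main obstacle, is the phase–space propagation for $e^{-iD_{r}^{2}t}$. For $u$ in the dense set $\{\,u:\hat u\in C_{0}^{\infty}(\mathbb{R}\setminus\{0\})\,\}$, a stationary phase analysis shows that $e^{-iD_{r}^{2}t}u$ concentrates, as $t\to\pm\infty$, in the region $r\approx 2\rho t$, so that $\mathrm{sgn}(r)=\mathrm{sgn}(\rho)$ for $t\to+\infty$ and $\mathrm{sgn}(r)=-\mathrm{sgn}(\rho)$ for $t\to-\infty$, while $|r|\to\infty$ on the support of $\psi(\rho^{2})$. Consequently, applying the PDO of the previous paragraph, $\eta(r)\to1$ and $\sigma^{+}(r,\rho)\to1,\ \sigma^{-}(r,\rho)\to0$ as $t\to+\infty$ (and the reverse as $t\to-\infty$); the $\mathcal{S}^{-1-\epsilon}$ remainder is $O(\langle r\rangle^{-1-\epsilon})$ and therefore vanishes in the strong limit because $e^{-iD_{r}^{2}t}u$ escapes to $|r|\to\infty$ (equivalently, $\langle r\rangle^{-(1+\epsilon)/2}$ is locally $D_{r}^{2}$–smooth, the one–dimensional analogue of $G_{0}$ in Theorem \ref{Thm_Limiting_Absorption_Principle}). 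Combining these facts yields the one–dimensional forms of \eqref{isometric_1} and \eqref{isometric_2}, and integrating in $\lambda$ against $dE_{P}(\lambda)$ gives \eqref{isometric_1} and \eqref{isometric_2} themselves.

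Finally I would deduce the ``in particular'' statement. For $u\in E_{H_{0}}(\Lambda)\mathcal{H}_{f}$ we have $\psi(H_{0})u=u$ since $\psi\equiv1$ on $\Lambda$, and unitarity of $e^{iH_{L}t}$ gives
\begin{gather*}
\|W^{\pm}(H_{L},H_{0};J^{\pm})u\|^{2}=\lim_{t\to\pm\infty}\|J^{\pm}e^{-iH_{0}t}u\|^{2}\\
=\lim_{t\to\pm\infty}\big\langle (J^{\pm})^{*}J^{\pm}e^{-iH_{0}t}u,\,e^{-iH_{0}t}u\big\rangle=\|u\|^{2},
\end{gather*}
where the last equality uses \eqref{isometric_1} together with $\psi(H_{0})e^{-iH_{0}t}u=e^{-iH_{0}t}u$; hence $W^{\pm}(H_{L},H_{0};J^{\pm})$ is isometric on $E_{H_{0}}(\Lambda)\mathcal{H}_{f}$. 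Likewise $\|J^{\mp}e^{-iH_{0}t}u\|^{2}=\langle (J^{\mp})^{*}J^{\mp}e^{-iH_{0}t}u,e^{-iH_{0}t}u\rangle\to0$ as $t\to\pm\infty$ by \eqref{isometric_2}, so that $W^{\pm}(H_{L},H_{0};J^{\mp})=0$.
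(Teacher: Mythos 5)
Your proposal is correct and follows essentially the same route as the paper: fibre-wise reduction over the spectral measure of $P$, identification of $(J_{\lambda}^{\pm})^{*}J_{\lambda}^{\pm}$ as a PDO with symbol $\eta^{2}\psi^{2}(\sigma^{\pm})^{2}$ modulo a negligible remainder, and a (non-)stationary phase argument exploiting that the stationary point $\rho=\frac{r}{2t}$ misses the support of $\sigma^{\pm}$ (resp.\ of the symbol minus $\psi^{2}$) in the relevant time direction. The only quibble is that by Theorem \ref{Thm_basic_calculus_for_FIO} the remainder lies in $\mathcal{S}^{-\epsilon}$ rather than $\mathcal{S}^{-1-\epsilon}$, but it is still a compact operator (compactly supported in $\rho$, decaying in $r$), so its contribution still vanishes in the strong limit and your argument goes through unchanged.
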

\begin{proof}
Up to a compact term, $(J_{\lambda}^{\pm})^{*}J_{\lambda}^{\pm}$ is a PDO $Q_{\lambda}^{\pm}$ with symbol
\begin{gather*}
\eta^{2}(r)\psi^{2}(\rho^{2})(\sigma^{\pm})^{2}(r, \rho).
\end{gather*}
If $t \to \mp \infty$, then the stationary point $\rho = \frac{r}{2t}$ of the integral
\begin{gather*}
(Q^{\pm}e^{-iH_{0, \lambda}t}u)(r) = \frac{1}{(2\pi )^{\frac{1}{2}}} \eta^{2}(r)^{2}\int_{\mathbb{R}} e^{ir \rho - i \rho^{2}t} \psi^{2}(\rho^{2})(\sigma^{\pm})^{2}(r, \rho) \hat{u}(\rho) d\rho.
\end{gather*}
does not belong to the support of the function $\sigma^{\pm}$. Therefore supposing $\hat{u} \in C_{0}^{\infty}(\mathbb{R})$ and integrating by parts, we estimate this integral by $C_{N}(1+|r|+|t|)^{-N}$ for an arbitrary $N$. This proves \eqref{isometric_2}. We apply the same argument to the PDO with sumbol $\eta^{2}(r)\psi^{2}(\rho^{2})(\sigma^{\pm})^{2}(r, \rho) - \psi^{2}(\rho^{2})$ to prove \eqref{isometric_1}.
\end{proof}
From now on, fix $\Lambda$ and $\psi$ as in Lemma \ref{Wave_Operators_are_isometric_1}.

\begin{Lem}
The wave operators $W^{\pm}(H_{0}, H_{L}; (J^{\pm})^{*})$ are isometric on $E_{H_{L}}(\Lambda) \mathcal{H}_f$.
\end{Lem}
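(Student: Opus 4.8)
The plan is to prove the statement for the reduced one-dimensional triplets coming from the fibration \eqref{J_1}; since $P$ commutes with $H_{0}$ and $H_{L}$ everything decomposes over the spectrum of $P$, so I would suppress the fibre parameter $\lambda$ and write $W_{1}^{+}=W^{+}(H_{L},H_{0};J^{+})$ and $W_{2}^{+}=W^{+}(H_{0},H_{L};(J^{+})^{*})$ (the lower sign being identical). Both exist by \eqref{WO_pp}, and I will argue for $v\in E_{H_{L}}(\Lambda)P_{ac}(H_{L})\mathcal{H}_{f}$, which is all of $E_{H_{L}}(\Lambda)\mathcal{H}_{f}$ once $\Lambda$ is taken disjoint from the eigenvalues of $H_{L}$.

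First I would reduce the isometry to an operator identity. The intertwining relation for $W_{2}^{+}$ gives $W_{2}^{+}v=E_{H_{0}}(\Lambda)W_{2}^{+}v$, so $W_{2}^{+}v$ lies in $E_{H_{0}}(\Lambda)\mathcal{H}_{f}$, on which $W_{1}^{+}$ is isometric by Lemma \ref{Wave_Operators_are_isometric_1}. Hence $\|W_{2}^{+}v\|=\|W_{1}^{+}W_{2}^{+}v\|$, and it suffices to establish
\[
W_{1}^{+}W_{2}^{+}=\psi^{2}(H_{L})P_{ac}(H_{L}),
\]
for then $W_{1}^{+}W_{2}^{+}v=\psi^{2}(H_{L})v=v$ because $\psi\equiv 1$ on $\Lambda$, giving $\|W_{2}^{+}v\|=\|v\|$.

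The key device for this identity is the cross term built into the preceding existence lemma. By the multiplication theorem for wave operators together with the existence of the operators in \eqref{WO_pp} and \eqref{WO_pm}, I have $W_{1}^{+}W_{2}^{+}=W^{+}(H_{L},H_{L};J^{+}(J^{+})^{*})$ and, crucially, $W^{+}(H_{L},H_{0};J^{-})\,W^{+}(H_{0},H_{L};(J^{-})^{*})=W^{+}(H_{L},H_{L};J^{-}(J^{-})^{*})$. Since Lemma \ref{Wave_Operators_are_isometric_1} gives $W^{+}(H_{L},H_{0};J^{-})=0$, this second product vanishes; by linearity of the wave operator in the identifier I may therefore replace $J^{+}(J^{+})^{*}$ by $J^{+}(J^{+})^{*}+J^{-}(J^{-})^{*}$. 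This is the whole point of carrying both modifiers: it lets me trade the missing $H_{L}$-propagation estimate for the $H_{0}$ stationary-phase information \eqref{isometric_1}--\eqref{isometric_2} already encoded in $W^{+}(H_{L},H_{0};J^{-})=0$.

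It then remains to compute $J^{+}(J^{+})^{*}+J^{-}(J^{-})^{*}$. Writing $(J^{\pm})^{*}$ as the FIO with the opposite phase and applying the composition theorems \ref{Thm_PDO_FIO_composition} and \ref{Thm_FIO_PDO_composition}, this sum is, modulo a compact operator, a pseudodifferential operator whose principal symbol is $\eta^{2}(r)\psi^{2}(\rho^{2})\big((\sigma^{+})^{2}+(\sigma^{-})^{2}\big)=\eta^{2}(r)\psi^{2}(\rho^{2})$, since $\sigma^{+}+\sigma^{-}\equiv 1$ and the two are disjointly supported. For large $|r|$ this matches the symbol of $\psi^{2}(H_{L})$, so the remainder $B:=J^{+}(J^{+})^{*}+J^{-}(J^{-})^{*}-\psi^{2}(H_{L})$ is a PDO of order at most $-1$ with $r$-decaying symbol, plus a term localized where $\eta\neq 1$. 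I would then factor $B$ through the operators $G_{0},G_{1},G_{2}$ of Theorem \ref{Thm_Kato_Smooth_Operators}, which are $H_{L}$-smooth by Theorems \ref{Thm_Limiting_Absorption_Principle} and \ref{Thm_Radiation_Estimates}, to conclude that $Be^{-itH_{L}}P_{ac}(H_{L})\to 0$ strongly as $t\to+\infty$, whence $W^{+}(H_{L},H_{L};B)=0$; since $\psi^{2}(H_{L})$ commutes with the evolution, $W^{+}(H_{L},H_{L};\psi^{2}(H_{L}))=\psi^{2}(H_{L})P_{ac}(H_{L})$, and the operator identity follows. I expect the main obstacle to be precisely this last step: executing the FIO symbol calculus for $J^{\pm}(J^{\pm})^{*}$ and controlling the remainder $B$ along the evolution $e^{-itH_{L}}$, where the Kato-smoothness and limiting-absorption estimates of the earlier sections must be brought to bear.
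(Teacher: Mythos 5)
Your overall architecture is sound and rests on the same two ingredients as the paper's proof: the vanishing $W^{\pm}(H_{L},H_{0};J^{\mp})=0$ from Lemma \ref{Wave_Operators_are_isometric_1}, and the fact that $J^{+}(J^{+})^{*}+J^{-}(J^{-})^{*}$ agrees with $\psi^{2}(H_{0})$, hence with $\psi^{2}(H_{L})$, modulo compact operators. You route these through the chain rule and the operator identity $W_{1}^{+}W_{2}^{+}=\psi^{2}(H_{L})P_{ac}(H_{L})$, whereas the paper argues directly on norms: it writes
$\|(J^{+})^{*}e^{-itH_{L}}u\|^{2}+\|(J^{-})^{*}e^{-itH_{L}}u\|^{2}=\langle (J^{+}(J^{+})^{*}+J^{-}(J^{-})^{*})e^{-itH_{L}}u,\,e^{-itH_{L}}u\rangle\to\|u\|^{2}$ using compactness and weak vanishing of $e^{-itH_{L}}u$ on the a.c.\ subspace, and kills the cross term by \eqref{qwe}. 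Your detour through the previous lemma's isometry of $W^{+}(H_{L},H_{0};J^{+})$ on $E_{H_{0}}(\Lambda)\mathcal{H}_{f}$ and the intertwining relation is legitimate but not needed in the paper's version; both proofs ultimately consume the same analytic facts.

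The one step that does not work as written is your treatment of the remainder $B=J^{+}(J^{+})^{*}+J^{-}(J^{-})^{*}-\psi^{2}(H_{L})$. Factoring $B$ through the Kato-smooth operators $G_{0},G_{1},G_{2}$ only yields $\int\|G_{j}e^{-itH_{L}}u\|^{2}\,dt<\infty$, which gives decay along a subsequence of times but not $\|Be^{-itH_{L}}P_{ac}(H_{L})u\|\to 0$ for all $t\to+\infty$; so this route does not establish $W^{+}(H_{L},H_{L};B)=0$. The repair is both simpler and exactly what the paper asserts: by Theorem \ref{Thm_basic_calculus_for_FIO} (iv) the operator $J_{\lambda}^{+}(J_{\lambda}^{+})^{*}+J_{\lambda}^{-}(J_{\lambda}^{-})^{*}-\psi^{2}(H_{0,\lambda})$ is compact (the symbols are compactly supported in $\rho$ and of order $0$, with principal part $\eta^{2}\psi^{2}((\sigma^{+})^{2}+(\sigma^{-})^{2})=\eta^{2}\psi^{2}$, and $\eta^{2}-1$ has compact support), and $\psi^{2}(H_{0,\lambda})-\psi^{2}(H_{L,\lambda})$ is compact by a resolvent/Stone--Weierstrass argument. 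Hence $B$ is compact, and since $e^{-itH_{L}}P_{ac}(H_{L})u\to 0$ weakly, $Be^{-itH_{L}}P_{ac}(H_{L})u\to 0$ in norm, giving $W^{+}(H_{L},H_{L};B)=0$ and completing your identity. With that substitution your proof is correct.
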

\begin{proof}
By Lemma \ref{Wave_Operators_are_isometric_1}, $W^{\pm}(H_{0}, H_{L}; (J^{\mp})^{*}) = W^{\pm}(H_{L}, H_{0}; J^{\mp})^{*} = 0$. This implies
\begin{gather}
\lim_{t \to \pm \infty}\|J^{\mp *}e^{-iH_{L}t} u \| = 0 , \ u \in E_{H_{L}}(\Lambda) \mathcal{H}_f. \label{qwe}
\end{gather}
Moreover, $J_{\lambda}^{+}(J_{\lambda}^{+})^{ *} + J_{\lambda}^{-} (J_{\lambda}^{-})^{ *} - \psi^{2}(H_{0, \lambda})$ and $\psi^{2}(H_{0, \lambda}) - \psi^{2}(H_{L, \lambda})$ are compact, and \eqref{qwe} implies that
\begin{gather*}
\lim_{t \to \pm \infty}\|(J^{\pm})^{*}e^{-iH_{L}t} u \| = \| u \| , \ u \in E_{H_{L}}(\Lambda) \mathcal{H}_f.
\end{gather*}
This implies the Lemma.
\end{proof}

\begin{Lem}
The wave operators $W^{\pm}(H_{L}, L_{0}+D_{r}a_{1}^{L}D_{r}; J^{*})$ are isometric on $P_{ac}(L_{0})\mathcal{H}$.
\end{Lem}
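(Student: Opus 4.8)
The plan is to mirror the argument used for $W^{\pm}(H_{0}, L_{0}; J^{*})$ in the proof of Theorem \ref{Thm_wave_operator_two_space_k_short}, now carrying the long-range radial term $D_{r}a_{1}^{L}D_{r}$ along in both operators. Write $\tilde{L} = L_{0} + D_{r}a_{1}^{L}D_{r}$. Since $a_{1}^{L}$ depends only on $r$ and satisfies \eqref{a1_long}, $\tilde{L}$ is a long-range perturbation of $L_{0}$ with index $\nu_{a_{1}^{L}} > 0$, so it falls under the hypotheses of Theorem \ref{Thm_Mourre_Theory}. In particular Theorem \ref{Thm_Kato_Smooth_Operators} and the limiting absorption principle (Theorem \ref{Thm_Limiting_Absorption_Principle}) apply to $\tilde{L}$, and every bounded multiplication operator with precompact support is locally $\tilde{L}$-smooth, being dominated by $G_{0} = \langle r\rangle^{-s}$.

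First I would establish existence of $W^{\pm}(H_{L}, \tilde{L}; J^{*})$, since isometricity presupposes the strong limit exists. The relevant perturbation is $H_{L}J^{*} - J^{*}\tilde{L}$, viewed as a map $\mathcal{H}\to\mathcal{H}_{f}$. On $M_{\infty}$ both $H_{L}$ and $\tilde{L}$ coincide with $D_{r}(1+a_{1}^{L})D_{r}+k(r)P$, while $J^{*}$ acts as multiplication by $\chi$ (followed by restriction to $M_{\infty}$); since $k(r)P$ commutes with $\chi(r)$, one obtains $H_{L}J^{*}-J^{*}\tilde{L}=[\,D_{r}(1+a_{1}^{L})D_{r},\chi\,]$, a first-order differential operator whose smooth coefficients are supported in $\operatorname{supp}\chi'\subset\{1/2\le r\le 1\}$ and hence have compact support. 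Exactly as in the proof of Theorem \ref{Thm_wave_operator} and Proposition \ref{Prop_wave_operator_two_space_gomi}, such an operator factorizes into a product of a locally $\tilde{L}$-smooth and a locally $H_{L}$-smooth operator, so Kato's smooth method (Theorem \ref{Kato's_smooth_method}) yields existence.

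For isometricity, fix $u$ in the absolutely continuous subspace of the free operator $\tilde{L}$. Because $e^{itH_{L}}$ is unitary and the defining strong limit exists, $\|W^{\pm}(H_{L},\tilde{L};J^{*})u\|=\lim_{t\to\pm\infty}\|J^{*}e^{-it\tilde{L}}u\|$, and this limit exists. Using $\|J^{*}v\|=\|\chi v\|$ and $JJ^{*}=\chi^{2}$ on $\mathcal{H}$, I compute
\[
\|u\|^{2}-\|J^{*}e^{-it\tilde{L}}u\|^{2}=\big((1-\chi^{2})e^{-it\tilde{L}}u,\,e^{-it\tilde{L}}u\big)=\|(1-\chi^{2})^{1/2}e^{-it\tilde{L}}u\|^{2},
\]
so it suffices to show the right-hand side tends to $0$. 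The factor $(1-\chi^{2})^{1/2}$ is bounded with precompact support (as $\chi=1$ for $r\ge 1$), hence locally $\tilde{L}$-smooth; therefore $\int_{-\infty}^{\infty}\|(1-\chi^{2})^{1/2}e^{-it\tilde{L}}u\|^{2}\,dt<\infty$, which forces $\liminf_{|t|\to\infty}\|(1-\chi^{2})^{1/2}e^{-it\tilde{L}}u\|=0$. Since $\lim_{t\to\pm\infty}\|J^{*}e^{-it\tilde{L}}u\|$ exists, the quantity $\|(1-\chi^{2})^{1/2}e^{-it\tilde{L}}u\|^{2}$ also converges as $t\to\pm\infty$; a convergent net whose $\liminf$ is $0$ has limit $0$, so $\lim_{t\to\pm\infty}\|J^{*}e^{-it\tilde{L}}u\|=\|u\|$, which is the asserted isometricity. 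To run the smoothness bounds rigorously I would localize with $E_{\tilde{L}}(\Lambda)$ for $\Lambda\Subset\mathbb{R}_{+}\setminus\sigma_{pp}(\tilde{L})$ and exhaust the absolutely continuous subspace up to a null set, as in Theorem \ref{Kato's_smooth_method}; here $P_{ac}(L_{0})\mathcal{H}$ is understood as the absolutely continuous subspace of the free operator $L_{0}+D_{r}a_{1}^{L}D_{r}$.

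The main obstacle I anticipate is precisely the step upgrading the $L^{2}$-in-time bound to a genuine limit: Kato-smoothness gives only time-integrability, hence $\liminf=0$, and one must invoke the already-established existence of the wave-operator limit (alternatively, local decay under the limiting absorption principle of Theorem \ref{Thm_Limiting_Absorption_Principle}) to conclude that the full limit vanishes. The accompanying bookkeeping over spectral intervals away from the non-accumulating eigenvalues is routine, and the analytic content reduces entirely to Theorem \ref{Thm_Kato_Smooth_Operators} once the commutator $[\,D_{r}(1+a_{1}^{L})D_{r},\chi\,]$ has been identified as a compactly supported first-order operator.
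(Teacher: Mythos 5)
Your argument is correct and is essentially the paper's own proof, which consists of the single sentence ``Use the local $L_{0}+D_{r}a_{1}^{L}D_{r}$-smoothness of $1-\chi$''; you have merely filled in the details that the paper leaves implicit (the identity $\|u\|^{2}-\|J^{*}v\|^{2}=((1-\chi^{2})v,v)$, the domination of $(1-\chi^{2})^{1/2}$ by $G_{0}=\langle r\rangle^{-s}$, and the upgrade from the $L^{2}$-in-time bound to an actual limit via the existence of the wave operator). Your reading of $P_{ac}(L_{0})\mathcal{H}$ as the absolutely continuous subspace of the free operator $L_{0}+D_{r}a_{1}^{L}D_{r}$ is the intended one.
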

\begin{proof}
Use the local $L_{0}+D_{r}a_{1}^{L}D_{r}$-smoothness of $1-\chi$.
\end{proof}

\begin{Lem}
The wave operators $W^{\pm}(L_{0}+D_{r}a_{1}^{L}D_{r}, H_{0}; J J^{\pm})$ are isometric on $E_{\Lambda}(H_{0})\mathcal{H}_{f}^{\pm}$ and $W^{\pm}(L_{0}+D_{r}a_{1}^{L}D_{r}, H_{0}; J J^{\pm})\mathcal{H}_{f}^{\mp} = 0$
\end{Lem}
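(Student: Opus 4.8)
The plan is to reduce the two assertions to propagation statements for the free flow $e^{-itH_{0}}$ on $\mathcal{H}_{f}$, in the spirit of the proof of Theorem \ref{Thm_wave_operator_two_space_k_short} but now carrying the modifier $J^{\pm}$. Write $\tilde{L}=L_{0}+D_{r}a_{1}^{L}D_{r}$. Existence of $W^{\pm}(\tilde{L},H_{0};JJ^{\pm})$ I would obtain from the chain rule
\[
W^{\pm}(\tilde{L},H_{0};JJ^{\pm})=W^{\pm}(\tilde{L},H_{L};J)\,W^{\pm}(H_{L},H_{0};J^{\pm}),
\]
whose two right-hand factors exist: $W^{\pm}(H_{L},H_{0};J^{\pm})$ by the existence lemma above, and $W^{\pm}(\tilde{L},H_{L};J)$ by the compactly supported perturbation argument of Proposition \ref{Prop_wave_operator_two_space_gomi} (it is the adjoint of $W^{\pm}(H_{L},\tilde{L};J^{*})$). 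Since the densities of $\mathcal{H}_{f}$ and $\mathcal{H}$ agree on $M_{\infty}$, the identifier $J$ is multiplication by $\chi(r)$ and $J^{*}J=\chi^{2}$ on $\mathcal{H}_{f}$; as $e^{it\tilde{L}}$ is unitary, $\|W^{\pm}(\tilde{L},H_{0};JJ^{\pm})w\|=\lim_{t\to\pm\infty}\|JJ^{\pm}e^{-itH_{0}}w\|$, and
\[
\|JJ^{\pm}e^{-itH_{0}}w\|^{2}=\langle (J^{\pm})^{*}\chi^{2}J^{\pm}e^{-itH_{0}}w,\ e^{-itH_{0}}w\rangle .
\]

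For isometry on $E_{\Lambda}(H_{0})\mathcal{H}_{f}^{\pm}$, take $u\in E_{\Lambda}(H_{0})\mathcal{H}_{f}^{\pm}$ and split $\chi^{2}=1-(1-\chi^{2})$:
\[
\|JJ^{\pm}e^{-itH_{0}}u\|^{2}=\langle (J^{\pm})^{*}J^{\pm}e^{-itH_{0}}u,\ e^{-itH_{0}}u\rangle-\langle (1-\chi^{2})J^{\pm}e^{-itH_{0}}u,\ J^{\pm}e^{-itH_{0}}u\rangle .
\]
By \eqref{isometric_1} of Lemma \ref{Wave_Operators_are_isometric_1} and the choice $\psi=1$ on $\Lambda$, the first term tends to $\langle\psi(H_{0})u,u\rangle=\|u\|^{2}$, while the second tends to $0$ once I establish the propagation estimate (i) below; hence $\lim_{t\to\pm\infty}\|JJ^{\pm}e^{-itH_{0}}u\|=\|u\|$. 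For the vanishing on $\mathcal{H}_{f}^{\mp}$, take $v\in\mathcal{H}_{f}^{\mp}$; then $\|JJ^{\pm}e^{-itH_{0}}v\|=\|\chi J^{\pm}e^{-itH_{0}}v\|\to 0$ by the estimate (ii) below, so $W^{\pm}(\tilde{L},H_{0};JJ^{\pm})v=0$. Everything thus rests on the two strong limits
\[
\text{(i)}\ \ \text{s-}\lim_{t\to\pm\infty}(1-\chi^{2})J^{\pm}e^{-itH_{0}}u=0\ \ (u\in\mathcal{H}_{f}^{\pm}),\qquad \text{(ii)}\ \ \text{s-}\lim_{t\to\pm\infty}\chi J^{\pm}e^{-itH_{0}}v=0\ \ (v\in\mathcal{H}_{f}^{\mp}),
\]
which I would prove exactly as the limits \eqref{isometric_1}--\eqref{isometric_2} in Lemma \ref{Wave_Operators_are_isometric_1}.

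Both reduce, via the Fourier-integral calculus used to define $J^{\pm}$, to oscillatory integrals in $\rho$ with phase $r\rho+\Phi_{\lambda}^{\pm}(r,\rho)-t\rho^{2}$ and amplitude $\eta(r)\psi(\rho^{2})\sigma^{\pm}(r,\rho)$ multiplied by $(1-\chi^{2})(r)$ in (i) and by $\chi(r)$ in (ii), applied to $\hat{u}$ (resp. $\hat{v}$) supported on the momentum half-line of $\mathcal{H}_{f}^{\pm}$ (resp. $\mathcal{H}_{f}^{\mp}$). The stationary point of the phase in $\rho$ is $r+\nabla_{\rho}\Phi_{\lambda}^{\pm}(r,\rho)-2t\rho=0$, i.e. $r\approx 2t\rho$ up to the lower-order term $\nabla_{\rho}\Phi_{\lambda}^{\pm}$ controlled by \eqref{Phi_decay}. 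In case (i) the momenta carry the sign of the wave operator, so the stationary point satisfies $r\to+\infty$ as $t\to\pm\infty$, where $1-\chi^{2}=0$; in case (ii) the momenta carry the opposite sign, so the stationary point satisfies $r\to-\infty$, where $\chi=0$. In both cases the amplitude vanishes near the stationary set, while away from it the phase is non-stationary with $\rho$ confined to a compact set away from $0$ by $\psi(\rho^{2})$, so integration by parts in $\rho$ yields a bound $C_{N}(1+|r|+|t|)^{-N}$ and hence the strong limits.

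The main obstacle is precisely this non-stationary-phase analysis: keeping track of how the long-range phase $\Phi_{\lambda}^{\pm}$ perturbs the location of the stationary point, and confirming that the frequency cutoff $\chi_{\lambda}^{\pm}(D_{r})$ does not reintroduce stationary contributions in the forbidden region $\pm r\rho<0$. Once (i) and (ii) are in hand, the isometry on $E_{\Lambda}(H_{0})\mathcal{H}_{f}^{\pm}$ and the relation $W^{\pm}(\tilde{L},H_{0};JJ^{\pm})\mathcal{H}_{f}^{\mp}=0$ follow immediately from the two displayed computations above.
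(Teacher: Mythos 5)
Your proof is correct and follows essentially the same route as the paper: the paper reduces both assertions to the strong limits \eqref{isometric_3} and \eqref{isometric_4}, computes the symbol of $(JJ^{\pm})^{*}JJ^{\pm}$ modulo compact terms, and invokes the non-stationary phase argument of Lemma \ref{Wave_Operators_are_isometric_1}, exactly as you do. Your splitting $\chi^{2}=1-(1-\chi^{2})$ so as to quote \eqref{isometric_1} directly is only a cosmetic repackaging of that same symbol computation.
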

\begin{proof}
It is enough to show that
\begin{gather}
\s-lim_{t \to \pm \infty} [ (J J^{\pm})^{*} J J^{\pm} - \psi(H_{0}) ] e^{-iH_{0}t}P_{\pm} = 0\label{isometric_3}\\
\s-lim_{t \to \pm \infty} (J J^{\pm})^{*} J J^{\pm} e^{-iH_{0}t}P_{\mp} = 0\label{isometric_4}
\end{gather}
where $P_{\pm} = 0$ are projections onto the subspaces $\mathcal{H}_{f}^{\pm}$.
Again up to a compact term,\\
$(J J^{\pm})^{*} J J^{\pm}P_{\mp}$
is a PDO with symbol
\begin{gather*}
\chi^{2}(r) \eta^{2}(r) \psi^{2}(\rho^{2}) (\sigma^{\pm})^{2}(r, \rho ) 1_{\mathbb{R}_{\mp}}(\rho ) = 0.
\end{gather*}
This implies \eqref{isometric_4}. Similarly, up to a compact term,
$(J J^{\pm})^{*} J J^{\pm}P_{\pm}$
is a PDO with symbol
\begin{gather*}
[(\chi^{2}(r) \eta^{2}(r) (\sigma^{\pm})^{2}(r, \rho ) - 1 ] \psi^{2}(\rho^{2})  1_{\mathbb{R}_{\pm}}(\rho ).
\end{gather*}
We apply the same argument as in Lemma \ref{Wave_Operators_are_isometric_1} to prove \eqref{isometric_3}.
\end{proof}

Combinig these results, we obtain the following theorem.
\begin{Thm}\label{Thm_wave_operators_k_long-range}
Suppose $\nu_{a_{2}} = \nu_{b_{1}} = \nu_{b_{2}} = \nu_{V} >1$, $\nu_{a_{3}} = 1$ and $a_{1}$ can be separated into two parts as in \eqref{a1_short_long} - \eqref{a1_short}.
Suppose $k$ is smooth long-range in the sense of Definition \ref{Def_of_k_SR_LR} and let the operators $J^{\pm}$ be defined by \eqref{J_1}, \eqref{J_2}, and \eqref{J_3} with $\Phi_{\lambda}^{\pm}$ satisfying the properties listed in Lemma \ref{Lemma_modifier} with $k$ replaced by $\lambda k$. We also assume that $\psi(\lambda) = 1$ on $\Lambda \Subset \mathbb{R}_{+}$, $\eta(r) = 0$ if $|r| < R$ for large enough $R$ as is taken in Lemma  \ref{Lemma_modifier}. Then the wave operators $W^{\pm}(L, H_{0}; J J^{\pm})$ and $W^{\pm}(H_{0}, L; (J J^{\pm})^{*})$ exist, are adjoint each other, are isometric on $E_{\Lambda}(H_{0} )\mathcal{H}_{f}^{\pm}$ and $E_{\Lambda}(L)P_{ac}(L)\mathcal{H}$, respectively, $W^{\pm}(L, H_{0}; J J^{\pm})\mathcal{H}_{f}^{\mp} = 0$, and the asymptotic completeness
\begin{gather*}
W^{\pm}(L, H_{0}; J J^{\pm}) E_{\Lambda}(H_{0} )\mathcal{H}_{f}^{\pm} = E_{\Lambda}(L)P_{ac}(L)\mathcal{H}
\end{gather*}
holds.
\end{Thm}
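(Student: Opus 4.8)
The plan is to prove Theorem \ref{Thm_wave_operators_k_long-range} by the chain rule for wave operators, inserting the intermediate operator $\tilde{L} := L_{0} + D_{r}a_{1}^{L}D_{r}$ on $\mathcal{H}$ and factoring the passage from $H_{0}$ on $\mathcal{H}_{f}$ to $L$ on $\mathcal{H}$ into a two-space step $H_{0} \to \tilde{L}$, carried by the modified identifier $JJ^{\pm}$, followed by a one-space step $\tilde{L} \to L$, carried by the identity. First I would observe that $\tilde{L}$ is the $\mathcal{H}$-realization of $H_{L}$: on $M_{\infty}$ it equals $D_{r}(1+a_{1}^{L})D_{r} + k(r)P$, so it is a long-range radial perturbation of $L_{0}$ and satisfies the Mourre hypotheses of Theorem \ref{Thm_Mourre_Theory}. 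The difference
\[
L - \tilde{L} = E - D_{r}a_{1}^{L}D_{r}
\]
collects precisely the short-range coefficients $a_{1}^{S}, a_{2}, b_{1}, b_{2}, V$ (all with exponent $>1$ under the present hypotheses) together with the metric term in $a_{3}$ with $\nu_{a_{3}} = 1$. Hence the pair $(L,\tilde{L})$ is exactly of the type governed by Theorem \ref{Thm_wave_operator} and the remark following it. This yields the one-space factor: $W^{\pm}(L,\tilde{L})$ and $W^{\pm}(\tilde{L},L)$ exist, are mutually adjoint, complete, and implement the unitary equivalence of $\tilde{L}^{(ac)}$ and $L^{(ac)}$; in particular $W^{\pm}(L,\tilde{L})$ is isometric on $P_{ac}(\tilde{L})\mathcal{H}$ with range $P_{ac}(L)\mathcal{H}$, intertwines $\tilde{L}$ and $L$, and therefore preserves the energy localization, $W^{\pm}(L,\tilde{L})E_{\Lambda}(\tilde{L}) = E_{\Lambda}(L)W^{\pm}(L,\tilde{L})$.

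Next I would assemble the two-space factor $W^{\pm}(\tilde{L},H_{0};JJ^{\pm})$ from the preceding lemmas through the intermediate operator $H_{L}$ on $\mathcal{H}_{f}$. Existence and mutual adjointness follow from the existence lemma for \eqref{WO_pp} and \eqref{WO_pm} composed, via the chain rule, with the $J$-identifier wave operators $W^{\pm}(\tilde{L},H_{L};J)$ and $W^{\pm}(H_{L},\tilde{L};J^{*})$, which exist by the argument of Proposition \ref{Prop_wave_operator_two_space_gomi} (the perturbation $\tilde{L}J - JH_{L}$ is a sum of products of first-order differential operators with compactly supported coefficients, since $\tilde{L}$ and $H_{L}$ agree on $\{\chi = 1\}$). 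The direct-side isometry of $W^{\pm}(\tilde{L},H_{0};JJ^{\pm})$ on $E_{\Lambda}(H_{0})\mathcal{H}_{f}^{\pm}$, together with $W^{\pm}(\tilde{L},H_{0};JJ^{\pm})\mathcal{H}_{f}^{\mp}=0$, is the content of the last lemma before the theorem.

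To obtain two-space asymptotic completeness I would establish the adjoint-side isometry of $W^{\pm}(H_{0},\tilde{L};(JJ^{\pm})^{*}) = W^{\pm}(\tilde{L},H_{0};JJ^{\pm})^{*}$ on $E_{\Lambda}(\tilde{L})P_{ac}(\tilde{L})\mathcal{H}$ by chaining $W^{\pm}(H_{0},H_{L};(J^{\pm})^{*})$ and $W^{\pm}(H_{L},\tilde{L};J^{*})$: for $u \in E_{\Lambda}(\tilde{L})P_{ac}(\tilde{L})\mathcal{H}$ the lemma on $W^{\pm}(H_{L},\tilde{L};J^{*})$ gives isometry on $P_{ac}(\tilde{L})\mathcal{H}$ and, by the intertwining property, maps $u$ into $E_{H_{L}}(\Lambda)\mathcal{H}_{f}$, on which $W^{\pm}(H_{0},H_{L};(J^{\pm})^{*})$ is isometric by Lemma \ref{Wave_Operators_are_isometric_1} and its companion. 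Since $W^{\pm}(\tilde{L},H_{0};JJ^{\pm})$ maps $E_{\Lambda}(H_{0})\mathcal{H}_{f}^{\pm}$ isometrically into $E_{\Lambda}(\tilde{L})P_{ac}(\tilde{L})\mathcal{H}$, and its adjoint is isometric on the latter, the standard completeness criterion (Yafaev \cite{Ya92}) forces this subspace to lie in the range, giving
\[
W^{\pm}(\tilde{L},H_{0};JJ^{\pm})E_{\Lambda}(H_{0})\mathcal{H}_{f}^{\pm} = E_{\Lambda}(\tilde{L})P_{ac}(\tilde{L})\mathcal{H}.
\]

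Finally I would compose the two factors. The chain rule gives existence, mutual adjointness, and $W^{\pm}(L,H_{0};JJ^{\pm}) = W^{\pm}(L,\tilde{L})\,W^{\pm}(\tilde{L},H_{0};JJ^{\pm})$; the vanishing on $\mathcal{H}_{f}^{\mp}$ is inherited from the two-space factor. Isometry on $E_{\Lambda}(H_{0})\mathcal{H}_{f}^{\pm}$ follows because that subspace is mapped isometrically onto $E_{\Lambda}(\tilde{L})P_{ac}(\tilde{L})\mathcal{H}$, on which the one-space factor is isometric; composing the two surjectivity statements onto the localized absolutely continuous subspaces yields the asymptotic completeness $W^{\pm}(L,H_{0};JJ^{\pm})E_{\Lambda}(H_{0})\mathcal{H}_{f}^{\pm} = E_{\Lambda}(L)P_{ac}(L)\mathcal{H}$, and the isometry of $W^{\pm}(H_{0},L;(JJ^{\pm})^{*})$ on $E_{\Lambda}(L)P_{ac}(L)\mathcal{H}$ is its adjoint restatement. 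The main obstacle is the two-space completeness of the middle step: the direct-side isometry alone does not give surjectivity, and one must run the adjoint wave operator and check that the two long-range channels $J^{+},J^{-}$ together exhaust the absolutely continuous subspace of $\tilde{L}$ at energy $\Lambda$. This rests on the channel-partition relation $J^{+}(J^{+})^{*}+J^{-}(J^{-})^{*}\equiv \psi^{2}(H_{0})$ modulo compacts together with the compactness of $\psi^{2}(H_{0})-\psi^{2}(H_{L})$, the genuinely two-space and long-range point already isolated in Lemma \ref{Wave_Operators_are_isometric_1} and the two lemmas following it.
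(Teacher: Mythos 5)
Your proposal is correct and follows essentially the same route as the paper: the paper's own "proof" is the single sentence "Combining these results, we obtain the following theorem," and the lemmas it combines are exactly the ingredients you use — the intermediate operator $L_{0}+D_{r}a_{1}^{L}D_{r}$, the existence and two-sided isometry lemmas for the pair $(H_{L},H_{0};J^{\pm})$, the $J$-identifier step, and the one-space short-range step via Theorem \ref{Thm_wave_operator} and the chain rule. Your write-up simply makes explicit the completeness bookkeeping (adjoint-side isometry forcing surjectivity onto the localized absolutely continuous subspace) that the paper leaves implicit.
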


\appendix

\section{Pseudo-differential operators with oscillating symbols}

In this appendix, we describe a class of pseudo-differential operators with oscillating symbols. 

We recall the H\"{o}rmander classes $\mathcal{S}^{m}_{\rho, \delta}$ for $m \in \mathbb{R}, \rho > 0, \delta < 1$.
We set $\mathcal{S}^{m}_{\rho, \delta} = \mathcal{S}^{m}_{\rho, \delta}(\mathbb{R}^{d}\times\mathbb{R}^{d})$ consists of functions $a \in C^{\infty}(\mathbb{R}^{d}\times\mathbb{R}^{d})$ such that, for all multi-indices $\alpha, \beta$, there exist $C_{\alpha, \beta}$ such that
\begin{gather*}
|(\del_{x}^{\alpha} \del_{\xi}^{\beta} a) (x, \xi)| \leq C_{\alpha, \beta} (1 + |x|)^{m-|\alpha|\rho + |\beta| \delta }
\end{gather*}
for all $(x, \xi) \in \mathbb{R}^{d}\times\mathbb{R}^{d}$. The best $C_{\alpha, \beta}$ are the semi-norms of the symbol $a$. We denote $\mathcal{S}^{m} = \mathcal{S}^{m}_{1, 0}$.
We say a symbol $a(x, \xi)$ is compactly supported in the variable $\xi$ if there is a compact set $K \Subset \mathbb{R}^{d}$ such that
\begin{gather*}
a(x, \xi) = 0
\end{gather*}
for all $x \in \mathbb{R}^{d}$ if $\xi \notin K$.
We denote the pseuodo-differential operator (PDO) with symbol $a(x, \xi)$ by $a(x, D)$
\begin{gather*}
(a(x, D)u)(x)= \frac{1}{(2\pi)^{\frac{d}{2}}}\int_{\mathbb{R}^{d}} e^{i \langle x, \xi \rangle}a(x, \xi) \hat{u}(\xi) d\xi
\end{gather*}
where $\hat{u}$ is the Fourier transform of $u$
\begin{gather*}
\hat{u}(\xi) = \frac{1}{(2\pi)^{\frac{d}{2}}}\int_{\mathbb{R}^{d}} e^{- i \langle x, \xi \rangle} u(x) dx.
\end{gather*}
The following is elementary.
\begin{Lem}
Suppose that $a \in \mathcal{S}^{m}$ and  $a$ is compactly supported in the variable $\xi$. Then $a(x, D)\langle x \rangle^{-m}$ is bounded in the space $L^{2}(\mathbb{R}^{d})$ and $a(x, D)\langle x \rangle^{-m^{\prime}}$ is compact if $m^{\prime} > m$.
\end{Lem}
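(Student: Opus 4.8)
The plan is to pass to the Schwartz kernel of $a(x,D)$, use the compact $\xi$-support to extract rapid off-diagonal decay, and then deduce boundedness from Schur's test and compactness from a spatial truncation argument. Throughout write $K\Subset\mathbb{R}^d$ for a fixed compact set containing $\operatorname{supp}_\xi a$.

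First I would record the kernel. Since $a(x,\cdot)$ is smooth and supported in $K$, the operator $a(x,D)$ has Schwartz kernel
\[
K_a(x,y)=(2\pi)^{-d}\int_K e^{i\langle x-y,\xi\rangle}\,a(x,\xi)\,d\xi .
\]
Because $a\in\mathcal{S}^m=\mathcal{S}^m_{1,0}$ (so $\delta=0$), differentiation in $\xi$ does not change the order and $|\del_\xi^\beta a(x,\xi)|\le C_\beta(1+|x|)^m$ for $\xi\in K$. Integrating by parts repeatedly in $\xi$ and using that the $\xi$-integral runs over the fixed compact set $K$ gives, for every $N$,
\[
|K_a(x,y)|\le C_N\,(1+|x|)^m\,\langle x-y\rangle^{-N}.
\]
The kernel of $a(x,D)\langle x\rangle^{-m}$ is $K_a(x,y)\langle y\rangle^{-m}$, and Peetre's inequality $(1+|x|)^m\langle y\rangle^{-m}\le C\langle x-y\rangle^{|m|}$ yields $|K_a(x,y)\langle y\rangle^{-m}|\le C_N\langle x-y\rangle^{|m|-N}$. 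Choosing $N>|m|+d$ makes the right-hand side integrable in $x$ uniformly in $y$, and in $y$ uniformly in $x$; Schur's test then gives the boundedness of $a(x,D)\langle x\rangle^{-m}$ on $L^2(\mathbb{R}^d)$.

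For compactness the same computation produces the sharper bound
\[
|K_a(x,y)\langle y\rangle^{-m'}|\le C_N\,\langle x-y\rangle^{|m|-N}\,\langle y\rangle^{-(m'-m)},
\]
now carrying a genuine decay factor $\langle y\rangle^{-(m'-m)}$ since $m'>m$. Let $\theta\in C_0^\infty(\mathbb{R}^d)$ with $\theta=1$ near the origin and set $\theta_R(x)=\theta(x/R)$. The truncated operator $\theta_R\,a(x,D)\langle x\rangle^{-m'}$ has a kernel supported in $\{|x|\le CR\}$ which, for $N$ large, is square-integrable over $\mathbb{R}^d\times\mathbb{R}^d$; hence it is Hilbert--Schmidt and in particular compact. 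It remains to show $\|(1-\theta_R)\,a(x,D)\langle x\rangle^{-m'}\|\to 0$ as $R\to\infty$. The tail kernel is supported in $\{|x|\ge cR\}$, and splitting the integral defining each Schur sum into the region $\{|y|\le|x|/2\}$ (where $|x-y|\gtrsim R$, so the large power $\langle x-y\rangle^{-N}$ forces smallness) and its complement $\{|y|>|x|/2\}$ (where $|y|\gtrsim R$, so $\langle y\rangle^{-(m'-m)}$ forces smallness) shows that both the row and column sums tend to $0$ as $R\to\infty$. Thus $a(x,D)\langle x\rangle^{-m'}$ is an operator-norm limit of compact operators and is therefore compact.

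The kernel representation and the two Schur estimates are routine. The one point needing care — which I expect to be the main obstacle — is the compactness when $m'-m$ is small, i.e.\ $0<m'-m\le d/2$: there $a(x,D)\langle x\rangle^{-m'}$ need not itself be Hilbert--Schmidt, so one cannot conclude compactness from square-integrability of its full kernel. This is precisely why I approximate by the spatially truncated operators $\theta_R\,a(x,D)\langle x\rangle^{-m'}$ and verify operator-norm convergence of the tails, balancing the off-diagonal decay (large $N$) against the near-diagonal decay $\langle y\rangle^{-(m'-m)}$, rather than arguing Hilbert--Schmidtness of the limit directly.
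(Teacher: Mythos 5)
Your proof is correct. Note that the paper offers no proof of this lemma at all (it is simply asserted as ``elementary''), so there is nothing to compare against; your kernel estimate $|K_a(x,y)|\le C_N(1+|x|)^m\langle x-y\rangle^{-N}$ obtained by integrating by parts over the compact $\xi$-support, followed by Peetre's inequality and Schur's test for boundedness, and by Hilbert--Schmidt truncations $\theta_R\,a(x,D)\langle x\rangle^{-m'}$ with operator-norm control of the tails for compactness, is the standard way to supply the missing argument and all the steps check out. Your closing remark is also apt: the truncation is genuinely needed when $0<m'-m\le d/2$, since the full kernel need not be square-integrable there.
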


Now we define a class of symbols with oscillating factor. Let $\epsilon >0, m \in \mathbb{R}$,  $\Phi \in \mathcal{S}^{1-\epsilon}$, and $a \in \mathcal{S}^{m}$. We denote classes of symbols of the form
\begin{gather*}
e^{i\Phi(x, \xi)}a(x, \xi)
\end{gather*}
by $C^{m}(\Phi)$. We denote the PDO with symbol $e^{i\Phi}a$ by $J(\Phi, a)$
\begin{gather*}
J(\Phi, a) = (e^{i\Phi}a)(x, D).
\end{gather*}
Clearly $C^{m}(\Phi) \subset \mathcal{S}^{m}_{\epsilon, 1-\epsilon}$ so that $C^{m}(\Phi)$ are good classes if $\epsilon > \frac{1}{2}$. On the other hand, the standard calculus fails for operators from these classes if $\epsilon \leq \frac{1}{2}$. However as is shown in \cite{Ya00_2}, $J(\Phi, a_{1})J(\Phi, a_{2})^{*}$ and $J(\Phi, a_{1})^{*}J(\Phi, a_{2})$ become  usual PDO and admit asymptotic expansions.
\begin{Thm}\label{Thm_basic_calculus_for_FIO}
Suppose that $\Phi \in \mathcal{S}^{1-\epsilon}$ with $\epsilon > 0$, and $a_{j} \in \mathcal{S}^{m_{j}}$ for $j=1, 2$ and some numbers $m_{j}$. Suppose that $a_{j}$ are compactly supported in the variable $\xi$. Then the following holds.
\begin{enumerate}
\item
$G = J(\Phi, a_{1})J(\Phi, a_{2})^{*}$ is a PDO with symbol $g \in \mathcal{S}^{m}$ for $m = m_{1} + m_{2}$ and $g(x, \xi)$ admits the asymptotic expansion 
\begin{gather*}
g = \sum_{|\alpha | \geq 0} \frac{1}{\alpha !}g_{\alpha},\\
g_{\alpha}(x, \xi) = \del_{\xi}^{\alpha}(e^{i \Phi(x, \xi)} a_{1}(x, \xi) D_{x}^{\alpha}(e^{-i \Phi(x, \xi)} \bar{a}_{2}(x, \xi)));
\end{gather*}
in particular, $g_{\alpha} \in \mathcal{S}^{m-|\alpha|\epsilon}$.
\item
$H = J(\Phi, a_{2})^{*}J(\Phi, a_{1})$ is a PDO with symbol $h \in \mathcal{S}^{m}$ for $m = m_{1} + m_{2}$ and $h(x, \xi)$ admits the asymptotic expansion 
\begin{gather*}
h = \sum_{|\alpha | \geq 0} \frac{1}{\alpha !}h_{\alpha }, \\
h_{\alpha}(x, \xi ) = D_{x}^{\alpha}(e^{i \Phi(x, \xi)} a_{1}(x, \xi) \del_{\xi}^{\alpha}(e^{-i \Phi(x, \xi)} \bar{a}_{2}(x, \xi )));
\end{gather*}
in particular, $h_{\alpha} \in \mathcal{S}^{m-|\alpha|\epsilon}$.
\item
$J(\Phi, a_{1})$ is bounded in the space $L^{2}(\mathbb{R}^{d})$ if $m_{1} = 0$ and it is compact if $m_{1} < 0$.
\item
Suppose $m_{1} = m_{2} = 0$. Denote by $A$ the PDO with symbol
\begin{gather*}
a(x, \xi) = a_{1}(x, \xi) \bar{a}_{2}(x, \xi) \in \mathcal{S}^{0}.
\end{gather*}
Then
$J(\Phi, a_{1})J(\Phi, a_{2})^{*} - A$ and $J(\Phi, a_{2})^{*}J(\Phi, a_{1}) - A$ are compact in $L^{2}(\mathbb{R}^{d})$.
\end{enumerate}
\end{Thm}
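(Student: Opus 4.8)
The plan is to establish (i) by an explicit kernel computation followed by a symbol-calculus argument, to obtain (ii) by the same computation with the two factors interchanged, and then to read off (iii) and (iv) from (i), (ii) and the elementary mapping lemma for $\xi$-compactly supported symbols stated above. First I would compute the Schwartz kernel of $G=J(\Phi,a_1)J(\Phi,a_2)^*$. Using the definition of $J(\Phi,a_j)$ and the adjoint, a direct calculation gives
\begin{gather*}
K_G(x,z)=(2\pi)^{-d}\int e^{i\langle x-z,\xi\rangle}\,e^{i(\Phi(x,\xi)-\Phi(z,\xi))}\,a_1(x,\xi)\,\bar a_2(z,\xi)\,d\xi,
\end{gather*}
the $\xi$-integration running over the compact joint $\xi$-support of $a_1,a_2$. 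Since the kernel of a PDO $g(x,D)$ is $(2\pi)^{-d}\int e^{i\langle x-z,\xi\rangle}g(x,\xi)\,d\xi$, inverting this relation and substituting $s=z-x$ exhibits $G$ as a PDO with symbol
\begin{gather*}
g(x,\xi)=(2\pi)^{-d}\int\!\!\int e^{i\langle s,\xi-\eta\rangle}\,e^{i(\Phi(x,\eta)-\Phi(x+s,\eta))}\,a_1(x,\eta)\,\bar a_2(x+s,\eta)\,ds\,d\eta .
\end{gather*}

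To produce the asymptotic expansion I would Taylor expand the amplitude $b(s,\eta)=e^{i(\Phi(x,\eta)-\Phi(x+s,\eta))}a_1(x,\eta)\bar a_2(x+s,\eta)$ in $s$ about $s=0$. For each monomial $s^\alpha$ I use $s^\alpha e^{i\langle s,\xi-\eta\rangle}=(i\partial_\eta)^\alpha e^{i\langle s,\xi-\eta\rangle}$, integrate by parts in $\eta$, and perform the $s$-integration, which collapses to $\eta=\xi$ via $\int e^{i\langle s,\xi-\eta\rangle}ds=(2\pi)^d\delta(\xi-\eta)$. Since $\partial_s^\alpha$ at $s=0$ becomes $\partial_x^\alpha$ acting on $e^{-i\Phi(x,\eta)}\bar a_2(x,\eta)$ while $e^{i\Phi(x,\eta)}a_1(x,\eta)$ is pulled out, this reproduces
\begin{gather*}
g_\alpha(x,\xi)=\partial_\xi^\alpha\bigl(e^{i\Phi(x,\xi)}a_1(x,\xi)\,D_x^\alpha(e^{-i\Phi(x,\xi)}\bar a_2(x,\xi))\bigr).
\end{gather*}
The order count rests on the cancellation of the two oscillating factors: after expanding $D_x^\alpha(e^{-i\Phi}\bar a_2)$ by the Leibniz rule and multiplying by $e^{i\Phi}$, every term is free of the exponential, and the slowest-decaying one arises when all derivatives fall on $e^{-i\Phi}$, producing $|\alpha|$ factors of $\partial_x\Phi\in\mathcal{S}^{-\epsilon}$ (derivatives falling on $\bar a_2$ only lower the order further). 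Since $\partial_\xi$ costs nothing for the class $\mathcal{S}^m=\mathcal{S}^m_{1,0}$, this gives $g_\alpha\in\mathcal{S}^{m-|\alpha|\epsilon}$ with $m=m_1+m_2$.

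The main obstacle is the control of the remainder, i.e. showing $g-\sum_{|\alpha|<N}\frac1{\alpha!}g_\alpha\in\mathcal{S}^{m-N\epsilon}$, for here $\Phi$ is only of order $1-\epsilon$, so $e^{i\Phi}$ is a genuinely non-symbolic oscillating factor and the naive $\eta$-integration by parts against $e^{i\langle s,\xi-\eta\rangle}$ fails because differentiating $e^{i(\Phi(x,\eta)-\Phi(x+s,\eta))}$ reintroduces powers of $|s|$. The remedy is a non-stationary phase argument adapted to the full phase $\psi(s,\eta)=\langle s,\xi-\eta\rangle+\Phi(x,\eta)-\Phi(x+s,\eta)$. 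Writing $\Phi(x,\eta)-\Phi(x+s,\eta)=-\langle s,\int_0^1(\nabla_x\Phi)(x+ts,\eta)\,dt\rangle$ and using $\nabla_x\nabla_\eta\Phi\in\mathcal{S}^{-\epsilon}$, one finds $\nabla_\eta\psi=-s+O(|s|\,\langle x\rangle^{-\epsilon})$, hence $|\nabla_\eta\psi|\ge c|s|$ once $|x|$ is large (the complementary bounded region being handled directly). Integration by parts with the first-order operator $\mathcal{L}=-i|\nabla_\eta\psi|^{-2}\langle\nabla_\eta\psi,\nabla_\eta\rangle$, which fixes $e^{i\psi}$, then leaves the oscillation untouched and gains a factor $O(|s|^{-1})$ at each step; applied $N$ times together with a cutoff separating $s=0$ (where the Taylor remainder vanishes to order $N$) from the non-stationary tail, this yields the required $\langle s\rangle^{-N}$ decay and the $\mathcal{S}^{m-N\epsilon}$ bound, the compactness of the $\eta$-support and the estimates $|\partial_x^\beta\partial_\eta^\gamma\Phi|\le C\langle x\rangle^{1-\epsilon-|\beta|}$ controlling all amplitude derivatives. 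This is exactly the calculus of \cite{Ya00_2}, which I follow; (ii) is identical with $a_1,\bar a_2$ and $\partial_\xi^\alpha,D_x^\alpha$ interchanged.

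Finally, (iii) and (iv) follow from (i) and (ii). For (iii), taking $a_2=a_1$ in (ii) shows $J(\Phi,a_1)^*J(\Phi,a_1)$ is a PDO with $\xi$-compactly supported symbol in $\mathcal{S}^{2m_1}$, hence bounded if $m_1=0$ and compact if $m_1<0$ by the mapping lemma; as $\|J(\Phi,a_1)\|^2=\|J(\Phi,a_1)^*J(\Phi,a_1)\|$ and $T^*T$ compact forces $T$ compact, the claim follows. For (iv), when $m_1=m_2=0$ the leading symbols satisfy $g_0=h_0=a_1\bar a_2$, which is the symbol of $A$, so $G-A$ and $H-A$ are PDOs with $\xi$-compactly supported symbols in $\mathcal{S}^{-\epsilon}$, and the mapping lemma makes them compact.
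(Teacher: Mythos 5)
The paper does not prove this theorem at all: it simply refers the reader to Yafaev \cite{Ya00_2}. Your sketch reconstructs that argument, and it is the same technique the paper itself uses to prove the neighbouring composition results (Theorems \ref{Thm_PDO_FIO_composition} and \ref{Thm_FIO_PDO_composition}): compute the Schwartz kernel, read off the symbol as an oscillatory integral in $(s,\eta)$, Taylor expand the amplitude at $s=0$ to generate the $g_\alpha$, and control the remainder by non-stationary phase. Your kernel formula for $J(\Phi,a_1)J(\Phi,a_2)^*$, the identification of $g_\alpha$, and the order count $g_\alpha\in\mathcal{S}^{m-|\alpha|\epsilon}$ (resting on the cancellation of $e^{i\Phi}e^{-i\Phi}$ and on $\partial_x\Phi\in\mathcal{S}^{-\epsilon}$ in this paper's $x$-weighted symbol classes) are all correct, and (iii) and (iv) do follow from (i)--(ii) together with the elementary mapping lemma exactly as you say.

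One point deserves more care than your phrase ``once $|x|$ is large'' suggests. The lower bound $|\nabla_\eta\psi|\geq c|s|$ uses
\begin{equation*}
\nabla_\eta\Phi(x,\eta)-\nabla_\eta\Phi(x+s,\eta)=\Bigl\langle s,\int_0^1(\nabla_x\nabla_\eta\Phi)(x+ts,\eta)\,dt\Bigr\rangle ,
\end{equation*}
and $\langle x+ts\rangle^{-\epsilon}$ is \emph{not} small uniformly in $s$ for fixed large $x$: the segment from $x$ to $x+s$ may pass through the origin. The standard fix, which is precisely the one the paper implements in its proof of Theorem \ref{Thm_PDO_FIO_composition}, is to split the $s$-integration into $|s|\leq C|x|$ with $C<1$ (where $\langle x+ts\rangle\geq(1-C)\langle x\rangle$ and the perturbation of $-s$ is genuinely $O(\langle x\rangle^{-\epsilon})|s|$) and $|s|\geq C|x|$ (where one instead bounds $|\nabla_\eta\Phi(x,\eta)|+|\nabla_\eta\Phi(x+s,\eta)|\leq C'(\langle x\rangle^{1-\epsilon}+\langle x+s\rangle^{1-\epsilon})\ll|s|$ and integration by parts yields $\langle s\rangle^{-N}$ directly). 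With that refinement your remainder estimate closes, and the rest of the argument stands.
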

For the proof of Theorem \ref{Thm_basic_calculus_for_FIO}, we refer Yafaev \cite{Ya00_2}.

Next we consider the product of a $PDO$ with oscillating symbol and a usual pseudo-differential operator. The situation is different whether the pseudo-differential operator is on the left and on the right.

\begin{Thm}\label{Thm_PDO_FIO_composition}
Suppose that $\Phi \in \mathcal{S}^{1- \epsilon}, a \in \mathcal{S}^{m_{a}},$ and $ b \in \mathcal{S}^{m_{b}}$ for $\epsilon > 0$ and some $m_{a}, m_{b} \in \mathbb{R}$. Suppose $a$ and $b$ are compactly supported in the variable $\xi$. Then there exists a symbol $d \in \mathcal{S}^{m_{d}}$ for $m_{d} = m_{a} + m_{b}$ such that $d$ is compactly supported in the variable $\xi$,
\begin{gather*}
b(x, D) J(\Phi, a) = J(\Phi, d),
\end{gather*}
and admits the asymptotic expansion
\begin{gather*}
d = \sum_{|\alpha | \geq 0} \frac{1}{\alpha !}d_{\alpha }, \\
d_{\alpha}(x, \eta) = (\del_{\zeta}^{\alpha} D_{z}^{\alpha}p)(0, 0,; x, \eta)
\end{gather*}
where
\begin{gather*}
p(z, \zeta; x, \eta) = b(x, \eta + \zeta + r(x, x + z, \eta ) ) a( x + z, \eta )
\end{gather*}
and
\begin{gather*}
r(x, y, \eta ) = \int_{0}^{1} (\nabla_{x}\Phi ) ( (1-\tau ) x + \tau y), \eta) d\tau .
\end{gather*}
In particular, $d_{\alpha} \in \mathcal{S}^{m_{d}-|\alpha|}$ and
\begin{gather*}
d_{0}(x, \eta ) = b(a, \eta + (\nabla_{x} \Phi )(x, \eta ) ) a(x, \eta ),\\
d_{\alpha}(x, \eta ) = (\del_{\eta}^{\alpha}b)(a, \eta + (\nabla_{x} \Phi )(x, \eta ) ) (D_{x}^{\alpha}a)(x, \eta ) \\+
  \langle \nabla_{\eta} \del_{\eta}^{\alpha}b(a, \eta + (\nabla_{x} \Phi )(x, \eta ) ), \frac{1}{2}(\nabla_{x} D_{x}^{\alpha} \Phi)(x, \eta ) \rangle a(x, \eta )
\end{gather*}
if $|\alpha| = 1$.
\end{Thm}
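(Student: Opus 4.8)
The plan is to compute the distributional kernel of the composition $b(x,D)J(\Phi,a)$ directly, read off the effective symbol $d$ as a regularized oscillatory integral, and then extract its asymptotic expansion by the standard amplitude calculus. First I would write, for $u$ in the Schwartz class,
\begin{gather*}
(b(x,D)J(\Phi,a)u)(x) = \frac{1}{(2\pi)^{3d/2}}\int\!\!\int\!\!\int e^{i\langle x-y,\xi\rangle}e^{i\langle y,\eta\rangle}b(x,\xi)e^{i\Phi(y,\eta)}a(y,\eta)\hat{u}(\eta)\,dy\,d\xi\,d\eta,
\end{gather*}
and compare with the defining formula for $J(\Phi,d)$. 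Matching the coefficient of $\hat{u}(\eta)\,d\eta$ (after factoring out the common $e^{i\langle x,\eta\rangle}e^{i\Phi(x,\eta)}$), the candidate symbol is
\begin{gather*}
d(x,\eta) = \frac{1}{(2\pi)^{d}}\int\!\!\int e^{i\langle x-y,\xi-\eta\rangle}e^{i(\Phi(y,\eta)-\Phi(x,\eta))}b(x,\xi)a(y,\eta)\,dy\,d\xi .
\end{gather*}

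The key algebraic step is to linearize the phase in the oscillating factor. Using the fundamental theorem of calculus I would write $\Phi(y,\eta)-\Phi(x,\eta) = \langle y-x,\ r(x,y,\eta)\rangle$ with $r$ exactly the averaged gradient in the statement, so that the total phase collapses to $\langle x-y,\ \xi-\eta-r(x,y,\eta)\rangle$. Substituting $z=y-x$ and $\zeta=\xi-\eta-r(x,x+z,\eta)$ reduces the phase to $-\langle z,\zeta\rangle$ and yields
\begin{gather*}
d(x,\eta) = \frac{1}{(2\pi)^{d}}\int\!\!\int e^{-i\langle z,\zeta\rangle}p(z,\zeta;x,\eta)\,dz\,d\zeta, \\
p(z,\zeta;x,\eta)=b(x,\eta+\zeta+r(x,x+z,\eta))\,a(x+z,\eta),
\end{gather*}
interpreted as a regularized oscillatory integral. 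I would then invoke the standard expansion for such integrals (Taylor-expand $p$ in $z$ about $z=0$ and integrate by parts in $\zeta$, with $D_z=-i\partial_z$), producing $d\sim\sum_\alpha\frac{1}{\alpha!}(\partial_\zeta^\alpha D_z^\alpha p)(0,0;x,\eta)$. The leading term is $p(0,0;x,\eta)=b(x,\eta+(\nabla_x\Phi)(x,\eta))a(x,\eta)$ since $r(x,x,\eta)=(\nabla_x\Phi)(x,\eta)$, and the $|\alpha|=1$ terms follow from the chain rule together with $\partial_z r(x,x+z,\eta)|_{z=0}=\tfrac12\nabla_x(\nabla_x\Phi)(x,\eta)$, giving the stated $d_\alpha$.

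The main obstacle is analytic rather than algebraic: because $\Phi\in\mathcal{S}^{1-\epsilon}$ the class $C^{m}(\Phi)$ is nonstandard when $\epsilon\le\tfrac12$, so the interchange of integrations and the change of variables above are only formal until justified in the oscillatory sense, and I must verify that $p$ lies in a legitimate amplitude class uniformly in $(x,\eta)$ before applying the expansion and controlling the remainder. Here the compact support of $a$ and $b$ in the frequency variable is decisive: it confines $\eta+\zeta+r$, and hence $\zeta$, to a bounded set, so that $p$ is compactly supported in $\zeta$ and grows only polynomially in $z$ through $a(x+z,\eta)$, placing it in an admissible amplitude class. The order counting giving $d_\alpha\in\mathcal{S}^{m_d-|\alpha|}$ then follows because each $D_z$ either differentiates $a$ (lowering its order by one in the $\mathcal{S}^{m_a}$ scale) or differentiates $r$ through $\nabla_x\Phi\in\mathcal{S}^{-\epsilon}$ (lowering the order by more than one), while the $\partial_\zeta$ derivatives act only on the frequency slot of $b$ and cost no order; the remainder estimates are obtained exactly as in the amplitude calculus of Yafaev \cite{Ya00_2} that also underlies Theorem \ref{Thm_basic_calculus_for_FIO}.
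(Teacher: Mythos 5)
Your proposal is correct and follows essentially the same route as the paper: write the composition as a triple oscillatory integral, factor out $e^{i\langle x,\eta\rangle+i\Phi(x,\eta)}$, linearize the phase difference via the averaged gradient $r(x,y,\eta)$, change variables to reduce the phase to $-\langle z,\zeta\rangle$, and Taylor-expand the amplitude $p$ at the origin. The paper merely makes the remainder estimate slightly more explicit (splitting the $z$-integration into $|z|\geq C|x|$ and $|z|\leq C|x|$ and integrating by parts in each region), which is exactly the ``standard amplitude calculus'' step you defer to Yafaev.
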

\begin{proof}
We compute 
\begin{align*}
& (b(x, D)J(\Phi, a) u)(x) \\
=& (2 \pi)^{- \frac{3 n}{2}} \int e^{i \langle x, \xi \rangle - i \langle y, \xi \rangle - + i \langle y, \eta \rangle + i \Phi(y, \eta)}  b(x, \xi ) a(y, \eta)\hat{u}(\eta) d\eta dy d\xi \\
=& (2 \pi)^{- \frac{3 n}{2}} \int e^{i \langle x, \eta \rangle + i \Phi(x, \eta)} \hat{u}(\eta) \big( \int e^{i \langle x-y, \xi - \eta \rangle + i( \Phi(y, \eta) - \Phi(x, \eta) )} b(x, \xi ) a(y, \eta) dy d\xi \big) d\eta \\
=& (2 \pi)^{- \frac{n}{2}} \int e^{i \langle x, \eta \rangle + i \Phi(x, \eta)} \hat{u}(\eta) d(x, \eta) d\eta
\end{align*}
where
\begin{gather*}
d(x, \eta) = (2 \pi)^{- n} \int e^{i \langle x - y, \xi - \eta \rangle + i( \Phi(y, \eta) - \Phi(x, \eta) )} b(x, \xi ) a(y, \eta)dy d\xi.
\end{gather*}
We set
\begin{gather*}
r(x, y, \eta) = \int_{0}^{1} (\nabla_{x} \Phi ) ( (1 -\tau ) x + \tau y, \eta) d\tau.
\end{gather*}
Then
\begin{gather*}
\Phi(y, \eta) - \Phi(x, \eta) = \langle y - x, r(x, y, \eta ) \rangle.
\end{gather*}
By changing variables, we compute
\begin{align*}
&d(x, \eta)\\
=& (2 \pi)^{- n} \int e^{i \langle x - y, \xi - \eta - r(x, y, \eta) \rangle} b(x, \xi ) a(y, \eta) dy d\xi\\
=& (2 \pi)^{- n} \int e^{i \langle x - y, \tilde{\xi} - \eta \rangle} b(x, \tilde{\xi} + r(x, y, \eta) ) a(y, \eta)  dy d\xi\\
=& (2 \pi)^{- n} \int e^{- i \langle z, \zeta \rangle} b(x, \eta + \zeta + r(x, x + z, \eta) ) a(x + z, \eta) dy d\xi.
\end{align*}
Set 
\begin{gather*}
p(z, \zeta; x, \eta ) = b(x, \eta + \zeta + r(x, x + z, \eta) ) a(x + z, \eta).
\end{gather*}
Then by Taylor's expansion formula, we obtain the folowing:
\begin{gather*}
d(x, \eta) = \sum_{ 0 \leq |\alpha| \leq N-1} \frac{1}{\alpha !} (\del_{\zeta}^{\alpha} D_{\zeta}^{\alpha} p )(0, 0; x, \eta) + p^{(N)}(x, \eta)
\end{gather*}
where
\begin{gather*}
p^{(N)}(x, \eta) = (2 \pi)^{- n} N \sum_{|\alpha| = N} \frac{1}{\alpha !} \int_{0}^{1} (1-t)^{N-1} \int \int (\del_{z}^{\alpha}p) (tz, \zeta; x. \eta)z^{\alpha}e^{-i \langle z, \zeta \rangle} dz d\zeta dt.
\end{gather*}
Set 
\begin{gather*}
R^{(\alpha)}(x, \eta; t) = \int \int (\del_{z}^{\alpha} D_{\zeta}^{\alpha} p)(tz, \zeta; x. \eta)e^{-i \langle z, \zeta \rangle} dz d\zeta.
\end{gather*}
Now it is enough to show that $R^{(\alpha)} \in S^{m_{d} - |\alpha|}$ and the seminorms are bounded uniformly with respect to the variable $t$. This obeys from the following two elementary lemmas.

\begin{Lem}
Fix $C > 0$. If $ |z| \geq C |x|$, then for any $n$, 
\begin{gather*}
|\int (\del_{z}^{\alpha} D_{\zeta}^{\alpha} p)(tz, \zeta; x. \eta)e^{-i \langle z, \zeta \rangle} d\zeta | \leq C \langle z \rangle^{-n}.
\end{gather*}

\begin{Lem}
There exists $C > 0$ such that
\begin{gather*}
|\int \int_{|z| \leq C|x|} (\del_{z}^{\alpha} D_{\zeta}^{\alpha} p)(tz, \zeta; x. \eta)e^{-i \langle z, \zeta \rangle} dz d\zeta| \leq C \langle x \rangle^{m_{d} - |\alpha|}.
\end{gather*}
\end{Lem}
\end{Lem}
By integrating by parts, we can show these lemmas.
\end{proof}

\begin{Thm}\label{Thm_FIO_PDO_composition}
Suppose that $\Phi \in \mathcal{S}^{1- \epsilon}, a \in \mathcal{S}^{m_{a}},$ and $ c \in \mathcal{S}^{m_{c}}$ for $\epsilon > 0$ and some $m_{a}, m_{c} \in \mathbb{R}$. Suppose $a$ is compactly supported in the variable $\xi$. Then there exists a symbol $e \in \mathcal{S}^{m_{e}}$ for $m_{e} = m_{a} + m_{c}$ such that
\begin{gather*}
J(\Phi, a) c(x, D)^{*}= J(\Phi, e),
\end{gather*}
and admits the asymptotic expansion
\begin{gather*}
e = \sum_{|\alpha | \geq 0} \frac{1}{\alpha !}e_{\alpha }, \\
e_{\alpha}(x, \eta) = (\del_{\zeta}^{\alpha} D_{z}^{\alpha}q)(0, 0,; x, \eta)
\end{gather*}
where
\begin{gather*}
q(z, \zeta; x, \eta) =  a( x, \eta + \zeta ) \bar{c}(x+ z + s(x, \eta+\zeta, \eta) , \eta + \zeta )
\end{gather*}
and
\begin{gather*}
s(x, \zeta, \eta ) = \int_{0}^{1} (\nabla_{\xi}\Phi ) ( x, (1- \tau)\eta + \tau \xi ) d\tau .
\end{gather*}
In particular, $e_{\alpha} \in \mathcal{S}^{m_{e}-|\alpha|}$ and
\begin{gather*}
e_{0}(x, \eta ) = a( x, \eta) \bar{c}(x+ (\nabla_{\eta} \Phi )(x, \eta ), \eta),\\
e_{\alpha}(x, \eta ) = (\del_{\eta}^{\alpha} a)( x, \eta) (D_{x}^{\alpha}\bar{c})(x+ (\nabla_{\eta} \Phi )(x, \eta ), \eta )\\
+ a(x, \eta) \big( (D_{x}^{\alpha}\del_{\eta}\bar{c}(x+ (\nabla_{\eta} \Phi )(x, \eta ), \eta)
+ \langle (\nabla_{x}D_{x}^{\alpha} \bar{c})(x+ (\nabla_{\eta} \Phi )(x, \eta ), \eta), \frac{1}{2} \nabla_{\xi}\del_{\xi}^{\alpha}\Phi(x, \eta) \rangle
\big)
\end{gather*}
if $|\alpha| = 1$.
\end{Thm}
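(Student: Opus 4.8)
The plan is to run the argument in close parallel with the proof of Theorem \ref{Thm_PDO_FIO_composition}, the only structural difference being that the genuine pseudo-differential factor now sits on the right and is conjugated. As a consequence the oscillation carried by $\Phi$ will be resolved in the \emph{frequency} variable rather than in the position variable, so that the displacement entering the amplitude is by $\nabla_\xi\Phi$ (packaged into $s$) instead of by $\nabla_x\Phi$. Because only one oscillating factor is present and the other factor $c$ is an honest symbol, the resulting calculus is the standard one (a full order gained per term), in contrast with the merely $\epsilon$-gaining calculus of Theorem \ref{Thm_basic_calculus_for_FIO}.

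First I would write $J(\Phi,a)c(x,D)^{*}$ as an iterated oscillatory integral, inserting the Schwartz kernel of the adjoint $c(x,D)^{*}$, namely $(2\pi)^{-n}\!\int e^{i\langle x-y,\xi\rangle}\bar c(y,\xi)\,d\xi$, and the definition of $J(\Phi,a)$. Carrying out the integral in the intermediate position variable produces a $\delta$ in the two intermediate frequencies; integrating one of them out collapses the expression, and after passing to $\hat u$ and peeling off the expected leading phase $e^{i\langle x,\eta\rangle+i\Phi(x,\eta)}$ one is left with the candidate symbol
\[
e(x,\eta)=(2\pi)^{-n}\int\!\!\int e^{-i\langle z,\zeta\rangle+i(\Phi(x,\eta+\zeta)-\Phi(x,\eta))}\,a(x,\eta+\zeta)\,\bar c(x+z,\eta+\zeta)\,dz\,d\zeta,
\]
under the change of variables $z=y-x$, $\zeta=\eta'-\eta$. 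Writing the phase difference in mean-value form $\Phi(x,\eta+\zeta)-\Phi(x,\eta)=\langle\zeta,\,s(x,\eta+\zeta,\eta)\rangle$ with $s(x,\eta+\zeta,\eta)=\int_0^1(\nabla_\xi\Phi)(x,\eta+\tau\zeta)\,d\tau$, and then absorbing this by the $\zeta$-dependent shift $z\mapsto z+s$, I obtain exactly $e(x,\eta)=(2\pi)^{-n}\int\!\!\int e^{-i\langle z,\zeta\rangle}q(z,\zeta;x,\eta)\,dz\,d\zeta$ with $q$ as in the statement.

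Next I would extract the asymptotic expansion by Taylor expanding $q$ in $(z,\zeta)$ about the origin; the monomials $z^{\beta}\zeta^{\gamma}e^{-i\langle z,\zeta\rangle}$ integrate to the familiar coefficients $e_{\alpha}=(\partial_{\zeta}^{\alpha}D_{z}^{\alpha}q)(0,0;x,\eta)$. Since $q$ depends on $z$ only through the first argument of $\bar c$, one has the clean identity $D_{z}^{\alpha}q=a(x,\eta+\zeta)(D_{X}^{\alpha}\bar c)(x+z+s,\eta+\zeta)$, so each factor $D_{z}$ lands on $\bar c$ as a position derivative and buys one order of decay, giving $D_{z}^{\alpha}\bar c\in\mathcal{S}^{m_c-|\alpha|}$; the subsequent $\partial_{\zeta}^{\alpha}$-derivatives act only in frequency slots and cost nothing in the $\mathcal{S}^{m}$-scale, while the contributions routed through $s$ only produce extra decay. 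This yields $e_{\alpha}\in\mathcal{S}^{m_e-|\alpha|}$ with $m_e=m_a+m_c$, and the stated $|\alpha|=1$ formula follows by differentiating $q$ once and using $s(x,\eta,\eta)=(\nabla_{\eta}\Phi)(x,\eta)$ together with $\partial_{\zeta}s|_{\zeta=0}=\tfrac12\nabla_\xi\nabla_\xi\Phi$.

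The main obstacle is to make the oscillatory integral for $e$ and its Taylor remainder rigorous, since $\Phi\in\mathcal{S}^{1-\epsilon}$ may have $\epsilon\le\tfrac12$, so the naive symbol calculus is unavailable. I would control it exactly as in the two lemmas that conclude the proof of Theorem \ref{Thm_PDO_FIO_composition}: split the $z$-integral into the regions $|z|\ge C|x|$ and $|z|\le C|x|$, integrate by parts in $\zeta$ in the far region to gain arbitrary decay and in $z$ in the near region to recover the correct power $\langle x\rangle^{m_e-|\alpha|}$, and verify that the seminorm bounds are uniform in the remainder parameter $t$. The one point that needs care is that the shift $s$ depends on $\zeta$ and is of size $|x|^{1-\epsilon}$; but since $1-\epsilon<1$ one has $x+z+s\sim x$ for bounded $z$, and $\partial_x\nabla_\xi\Phi\in\mathcal{S}^{-\epsilon}$ is bounded, so the chain rule preserves the symbol orders, $q\in\mathcal{S}^{m_e}$, and the integration-by-parts estimates go through unchanged.
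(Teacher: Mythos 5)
Your proposal is correct and follows exactly the route the paper intends: the paper's own proof of Theorem \ref{Thm_FIO_PDO_composition} consists only of the remark that it is ``similar to Theorem \ref{Thm_PDO_FIO_composition}'', and what you have written is precisely that parallel argument, with the mean-value resolution of the phase now taken in the frequency variable (producing the shift $s$ built from $\nabla_\xi\Phi$ in the position slot of $\bar c$) and the remainder controlled by the same two integration-by-parts lemmas. Your bookkeeping of orders in the paper's $x$-decay convention for $\mathcal{S}^m$, including the observation that $\zeta$-derivatives routed through $s$ cost $\nabla_\xi\nabla_\xi\Phi\in\mathcal{S}^{1-\epsilon}$ but land a full $x$-derivative on $\bar c$ and hence gain $\langle x\rangle^{-\epsilon}$, is consistent with the stated conclusion $e_\alpha\in\mathcal{S}^{m_e-|\alpha|}$.
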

Proof is similar to Theorem \ref{Thm_PDO_FIO_composition}.

\end{document}